\newtheorem{prop}{Proposition}[section]
\newtheorem{lemma}{Lemma}[section]
\newtheorem{theorem}{Theorem}[section]
\def\ra{\rangle}
\def\la{\langle}
\newcommand{\beq}{\begin{equation}}
\newcommand{\eeq}{\end{equation}}
\newcommand{\ben}{\begin{eqnarray}}
\newcommand{\een}{\end{eqnarray}}
\newcommand{\beno}{\begin{eqnarray*}}
\newcommand{\eeno}{\end{eqnarray*}}
\theoremstyle{remark}
\newtheorem{remark}{Remark}[section]
\begin{document}
\title{Stability threshold for 2D shear flows near Couette \\of the Navier-Stokes equation }

\author{ {Dongfen Bian}  \\[1ex]
\normalsize School of Mathematics and Statistics,\\
\normalsize Beijing Institute of Technology, Beijing 100081, China \\ 
\normalsize {biandongfen@bit.edu.cn}
\and
{Xueke Pu
} \\[1ex] 
\normalsize School of Mathematics and Information Science,\\
\normalsize Guangzhou University, Guangzhou 510006, China\\ 
\normalsize {xuekepu@gzhu.edu.cn} \\
}

\date{}

\maketitle
\begin{abstract}
In this paper, we consider the stability threshold of the 2D shear flow $(U(y),0)^{\top}$ of the Navier-Stokes equation at high Reynolds number $Re$. When the shear flow is near in Sobolev norm to the Couette flow $(y,0)^{\top}$ in some sense, we prove that if the initial data $u_0$ satisfies $\|u_0-(U(y),0)^{\top}\|\leq \epsilon Re^{-1/3}$, then the solution of the 2D Navier-Stokes equation approaches to some shear flow which is also close to the Couette flow for $t\gg Re^{1/3}$, as $t\to\infty$.

\end{abstract}
\begin{center}
 \begin{minipage}{120mm}
   { \small {\bf AMS Subject Classification (2020):}  35Q35; 76D03}
\end{minipage}
\end{center}
\begin{center}
 \begin{minipage}{120mm}
   { \small {{\bf Key Words:}  Navier-Stokes equationn; shear flow;  stability threshold; nonlinear stability}
         }
\end{minipage}
\end{center}

\section{Introduction}
\setcounter{section}{1}\setcounter{equation}{0}
In this paper, we consider the following 2D incompressible Navier-Stokes equation in $\Bbb T\times \bf R$:
\begin{equation}
\begin{cases}
\partial_tu+u\cdot\nabla u+\nabla P=\nu\Delta u,\\
\nabla\cdot u=0,\\
u|_{t=0}=u_{in}(x,y),
\end{cases}
\end{equation}
where $\nu>0$ denotes the viscosity, i.e.,  the inverse of the Reynolds number $Re$. When $u_{in}(x,y)=(U(y),0)^{\top}$, then the solution of the Navier-Stokes equation can be explicitly given by the following
\begin{equation}
\begin{split}
  u_S(t,x,y)=\left(
  \begin{array}{c}
       \bar U(t,y) \\
        0 \\
  \end{array}
  \right)=
  \left(
  \begin{array}{c}
       e^{\nu t\partial_y^2}U(y) \\
        0 \\
  \end{array}
  \right),
\end{split}
\end{equation} 
where the subscript $S$ stands for shear.

To study the stability of this shear flow, it is natural to consider the perturbation $v=u-u_S$, which, in the $(t,x,y)$ coordinate  system, satisfies
\begin{equation}
\begin{cases}
v_t+\bar U\partial_xv+v\cdot\nabla v-\nu\Delta v+\left(
      \begin{array}{c}
       \partial_y\bar Uv^2 \\
        0 \\
      \end{array}
 \right)
+\nabla p=0,\\
\nabla\cdot v=0,\ \ \ v|_{t=0}=v_{in}=u_{in}-(U(y),0)^{\top}.
\end{cases}
\end{equation}
In this paper, we will only consider the case when $(U(y),0)^{\top}$ is near the standard Couette flow $(y,0)^{\top}$ in the sense $\|\partial_yU-1\|_{H^{\sigma}}\ll1$ and $\|\partial_{yy}U(y)\|_{H^{\sigma}}\ll1$ for some large enough integer $\sigma>0$.

In terms of vorticity $\omega=\nabla^{\bot}\cdot v$, we have 
\begin{equation}\label{vorticity}
\begin{cases}
\omega_t+\bar U\partial_x\omega+v\cdot\nabla \omega=\bar U''\partial_x\varphi+\nu\Delta\omega,\\
\omega|_{t=0}(x,y)=\omega_{in}(x,y),\\
\Delta\varphi=\omega,\\
v=\nabla^{\bot}\varphi=\left(
      \begin{array}{c}
        -\partial_y\varphi\\
        \partial_x\varphi \\
      \end{array}
 \right).
\end{cases}
\end{equation}

When $U(y)=y$, the shear flow $(U(y),0)^{\top}$ reduces to the Couette flow $(y,0)^{\top}$, whose stability analysis dates back to the classical results of Rayleigh \cite{Rayleigh1880} and Kelvin \cite{Kelvin1887} for the incompressible fluid. The Couette flow is mathematically spectrally stable for all Reynolds numbers, although instability of Couette flow and transition to turbulence is observed experimentally for sufficiently high Reynolds number. This paradox gained much attention in the fluid mechanics. Another interesting phenomenon is the inviscid damping, as firstly observed by Orr \cite{Orr} that the velocity will tend to zero as $t\to\infty$, even for Euler equations. The nonlinear stability and inviscid damping around the Couette flow are sensitive to the topology of the perturbation. Indeed, Lin and Zeng \cite{LZ11} showed that nonlinear inviscid damping is not true in any vorticity $H^s\ (s<3/2)$ neighborhood of Couette flow for any horizontal period, while Bedrossian and Masmoudi \cite{BM15} proved nonlinear inviscid damping around the Couette flow in Gevrey class $2_{-}$.

The purpose of this paper is to study the long-time dynamics of the perturbatio $\omega$ solving \eqref{vorticity} in the high Reyolds number limit $\nu\to0$. Compared to (1.3) in \cite{MZ19}, we have an extra source term $\bar U''\partial_x\varphi$ in the vorticity equation. This leads to the following linearization equation that is different to the linearization studied by Rayleigh \cite{Rayleigh1880} and Keilvin \cite{Kelvin1887},
\begin{equation}
\begin{cases}
\omega_t+\bar U\partial_x\omega-\bar U''\partial_x\varphi-\nu\Delta\omega=0,\\
\omega|_{t=0}(x,y)=\omega_{in}(x,y),\\
\Delta\varphi=\omega.
\end{cases}
\end{equation}

To study the stability of the shear flow, the goal is, given a norm $\|\cdot\|_{X}$, to determine an exponent $\gamma=\gamma(X)$ such that $\|u_{in}\|_X\lesssim \nu^{\gamma}$ implies stability and $\|u_{in}\|_X\gtrsim  \nu^{\gamma}$ implies possible instability. In the  applied  literature, $\gamma$ is sometimes referred to as the  transition threshold. In the case of the 3D Couette flow, it was shown that $\gamma=1$ when $X$ is taken as Gevrey-$m$ with $m<2$ in \cite{BGM15a,BGM15b} and $\gamma\leq3/2$ when $X$ is taken as Sobolev space $H^{s}$ for $s>7/2$ in \cite{BGM17}. In the case of the 2D Couette flow, it was shown that $\gamma=0$ when $X$
 is taken as Gevrey-$m$ with $m<2$ in \cite{BMV16}. It was also shown that $\gamma=1/2$ for 2D shear flow near Couette flow in Sobolev spaces \cite{BVW18}. This threshold was improved to $\gamma=1/3$ for the 2D shear flow in sufficiently regular spaces $H^{\sigma}$ for $\sigma\geq40$ by Masmoudi and Zhao \cite{MZ19}.

In the paper of Bedrossian, Vicol and Wang \cite{BVW18}, the authors showed that for a shear flow $(U(y),0)^{\top}$ close to the Couette flow $(y,0)^{\top}$ with $\|U(y)-y\|_{H^{N+4}}\ll 1$, the Sobolev stability threshold is $\gamma=1/2$. That is to say, if the initial perturbation is $\epsilon\ll \nu^{1/2}$ close to the shear flow, then the solution of the 2D Navier-Stokes equation remains $\epsilon$ close to $(e^{t\nu\partial_{yy}}U(y),0)^{\top}$ for all $t>0$. Our main goal is to improve this threshold to $\gamma=1/3$. We aimed to show that when $\|u_{in}-U(y)\|_{H^{\sigma}}=\epsilon \lesssim \nu^{1/3}$, then the solution is stable and $\epsilon$ close to the shear flow for $t>0$. We also want to stress that to obtain the improved stability threshold $\gamma=1/3$ of a general shear flow $(U(y),0)^{\top}$, we would assume that that the shear flow is very close to the Couette flow $(y,0)^{\top}$. In particular, we would assume  $\|U(y)-y\|_{H^{\sigma}}\leq \epsilon\nu^{1/3}$ for some small constant $\epsilon$ independent of $\nu$. To estimate the source term caused by the term $\bar U''\partial_x\varphi$, since generally $\bar U''\neq 0$ in contrast to the Couette flow case, we also require a decay of the background solution, which is insured by assuming $\|U(y)-y\|_{L^1}\leq \epsilon\nu^{5/4}$ with $\epsilon\leq \epsilon_0$ for some small positive $\epsilon_0$ and utilizing the decay of the one dimensional heat equation.

\subsection{Main results}
Below, we state the main result in this paper.
\begin{theorem}\label{thm1.1}
For $\sigma\geq40$, $\nu>0$, there exist $0<\epsilon_0,\nu_0<1$ such that for all $0<\nu<\nu_0$ and $0<\epsilon<\epsilon_0$, if the initial shear $U$ satisfies 
$\|U(y)-y\|_{H^{\sigma}}\leq \epsilon\nu^{1/3}$, $\|U(y)-y\|_{L^1}\leq \epsilon\nu^{5/4}$  and the initial perturbation $\omega_{in}$ satisfies $\|\omega_{in}\|_{H^{\sigma}}\leq \epsilon\nu^{1/3}$, then the solution $\omega(t)$ satisfies the following properties:\\
(1) global stability in $H^{\sigma}$,
\begin{equation}
\|\omega(t,x+t\bar U(t,y)+\Phi(t,y),y)\|_{H^{\sigma}}\leq C\epsilon\nu^{1/3},
\end{equation}
where $\Phi(t,y)$ is given explicitly by
$$\Phi(t,y)=\int_0^te^{\nu(t-\tau)\partial^2_y} \left(\frac{1}{2\pi}\int_{\Bbb T}v^x(\tau,x,y)dx\right)d\tau.$$
\\
(2) Inviscid damping
$$\|P_{\neq}v^x\|_2+\la t\ra\|v^y\|_2\lesssim \frac{C\epsilon \nu^{1/3}}{\la t\ra\la \nu t^3\ra}.$$\\
(3) Weak enhanced dissipation,
$$\|P_{\neq}\omega(t)\|_2\lesssim  \frac{C\epsilon \nu^{1/3}}{ \la \nu t^3\ra}.$$
The constant $C$ is independent of $\nu$ and $\epsilon$.
\end{theorem}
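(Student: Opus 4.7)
The plan is to adapt the Masmoudi--Zhao strategy \cite{MZ19} developed for the Couette case, with two new ingredients needed to handle the genuinely non-Couette background $\bar U(t,y)$: a change of variable that straightens out the zero-in-$x$ mode of the velocity, and an independent decay estimate on $\bar U''$ obtained from 1D heat semigroup smoothing. I introduce the co-moving coordinates
\[
X = x - t\bar U(t,y) - \Phi(t,y), \qquad Y = y,
\]
with $\Phi$ as in the statement of Theorem \ref{thm1.1}, and let $f(t,X,Y)$ denote the vorticity in this frame, with associated stream function $\psi$. The equation for $f$ retains the transport/dissipation backbone of the Couette case but carries the extra linear forcing $\bar U''(t,Y)\partial_X\psi$, which is absent when $U(y)=y$. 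I split $f=f_0+f_{\neq}$ into the $X$-average and its non-zero $X$-frequency part.

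The core argument is a bootstrap on $[0,T_\star]$ with four assumptions: (i) a Masmoudi--Zhao-type Fourier multiplier bound $\|A(t,\nabla)f_{\neq}\|_{H^\sigma}\le C\epsilon\nu^{1/3}$, where $A$ is tailored to the $\nu^{-1/3}$ enhanced-dissipation time scale and encodes the inviscid-damping gain via its commutator structure with the sheared Laplacian; (ii) the associated dissipation integral $\nu\int_0^t\||\nabla_L|Af_{\neq}\|_{H^\sigma}^2\,d\tau\lesssim(\epsilon\nu^{1/3})^2$; (iii) the zero-mode control $\|f_0\|_{H^\sigma}\le C\epsilon\nu^{1/3}$, equivalently smoothness and smallness of $\Phi$; and (iv) the background smallness $\|\bar U-y\|_{H^\sigma}\lesssim\epsilon\nu^{1/3}$ and $\|\bar U-y\|_{L^1}\lesssim\epsilon\nu^{5/4}$, which propagates automatically from the hypotheses because $L^p$ norms under the 1D heat flow are non-increasing. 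Item (iv) feeds back into items (i)--(iii) through the pointwise decay
\[
\|\bar U''(t,\cdot)\|_{L^\infty}\lesssim \min\bigl(\epsilon\nu^{1/3},\ \epsilon\nu^{-1/4}\la t\ra^{-3/2}\bigr),
\]
obtained by letting the 1D heat kernel act on $U-y\in L^1\cap H^\sigma$. The genuinely nonlinear terms $v\cdot\nabla f$ are then handled exactly as in \cite{MZ19} by the reaction--transport--remainder decomposition, using the commutator gains built into $A$ and the dissipation in (ii).

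The main obstacle is closing the energy estimate for $Af_{\neq}$ against the new source $\bar U''\partial_X\psi$: it has no analogue in \cite{MZ19} and one must show that it is integrable in time at the scale $\epsilon\nu^{1/3}$ without sacrificing the $1/3$ threshold. Combining the pointwise decay of $\bar U''$ with the bootstrapped inviscid-damping bound $\|\partial_X\psi_{\neq}\|_{L^2}\lesssim\epsilon\nu^{1/3}\la t\ra^{-1}\la\nu t^3\ra^{-1}$, one splits the time integral at $t\sim\nu^{-1/3}$: on the short interval the uniform bound $\|\bar U''\|_{L^\infty}\lesssim\epsilon\nu^{1/3}$ is used, and on the long interval the heat-decay bound supplies the extra smallness, producing a total contribution of size $O(\epsilon^2)\nu^{1/3}$ that is absorbed for $\epsilon$ small. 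The zero-mode equation, which drives $\Phi$ and the asymptotic shear, is quadratic in $f_{\neq}$ and closes through the same inviscid damping of $v^y$. Once the bootstrap closes for all $T_\star$, statements (1)--(3) of the theorem follow from the definition of $A$: dividing out $A$ converts the multiplier norm into the stated $H^\sigma$ bound in the co-moving frame and into the inviscid-damping and weak enhanced-dissipation rates, respectively.
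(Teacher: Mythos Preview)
Your overall plan---follow \cite{MZ19}, add a source-term estimate for $\bar U''\partial_X\phi$, and use heat-kernel decay of $\bar U''$---is the same as the paper's, and your treatment of the new linear forcing is in the right spirit.  However, your choice of coordinates is substantively different and, as written, is the gap in the argument.

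You take $Y=y$.  The paper (and \cite{MZ19}) changes \emph{both} variables:
\[
X=x-t\bar U(t,y)-t\psi(t,y),\qquad Y=\bar U(t,y)+\psi(t,y),
\]
with $\Phi=t\psi$.  The reason this matters is the structure of the transformed Laplacian.  With the paper's change one gets $\Delta_t=\Delta_L+G\,\partial_{YY}^L+\cdots$ where $G=(\bar U'+\psi_y)^2-1$ carries \emph{no explicit factor of $t$}; under the bootstrap $\|G\|_{H^\sigma}\lesssim\epsilon\nu^{1/6}$ uniformly, and this is exactly what lets the dissipation-error and elliptic estimates close against the Masmoudi--Zhao multiplier $A^\sigma$.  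With $Y=y$ instead, $\partial_y=\partial_Y-a(t,Y)\partial_X$ where $a=t\bar U'+\Phi_y=t+t(\bar U'-1+h)$, so the perturbation of $\Delta_L$ is governed by $a-t=t(\bar U'-1+h)$, which grows linearly in $t$.  The $A^\sigma$ multiplier is built for the constant-shift symbol $k^2+(\eta-kt)^2$, and its commutator/CK structure does not obviously absorb a variable-coefficient perturbation of size $t\epsilon\nu^{1/3}$ at top regularity; nor does the key elliptic estimate $\|\Delta_L\Delta_t^{-1}\Omega_{\neq}\|_{H^\sigma}\lesssim\|\Omega_{\neq}\|_{H^\sigma}$ survive without the $t$-free coefficient.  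The paper explicitly flags the coordinate choice as its main new ingredient and notes that the simpler change used in \cite{BVW18} only yields threshold $1/2$.

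A second, related gap: your bootstrap assumption ``(iii) $\|f_0\|_{H^\sigma}\le C\epsilon\nu^{1/3}$, equivalently smoothness and smallness of $\Phi$'' is not sufficient.  Control of $\Omega_0$ does not directly bound $h=\partial_y\psi$, and the paper needs a separate, coupled system of bootstrap hypotheses on the auxiliary coordinate functions $g$, $h$, $\bar h$ at several regularity levels (in particular $\|h\|_{H^\sigma}\lesssim\epsilon\nu^{1/6}$ and $\|h\|_{H^{\sigma-1}}\lesssim\epsilon\nu^{1/3}$) in order to close the elliptic and dissipation-error estimates.  Without these, you cannot justify that the transformed operators are close enough to $\Delta_L$ for the $A^\sigma$ machinery to apply.
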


\begin{remark}
Stronger enhanced dissipation can be obtained in the form $\|\Omega_{\neq}\|_L^2\leq C\epsilon\nu^{1/3}\la \nu t^3\ra^{-\alpha}$ for $\alpha\geq1$, as in \cite{MZ19}. We also want to remark that the proof of this theorem depends heavily on the proof of the paper of \cite{MZ19}, where the Masmoudi and Zhao obtained $\gamma=1/3$ stability threshold of the 2D Couette flow in Sobolev spaces $H^{\sigma}$ for $\sigma\geq40$. Because of this, many details of the estimates, in particular those that have appeared in their paper, are omitted and only sketched in the present paper. The main multiplier $A^{\sigma}_k$, the multiplier $A^s_E$ quantifying the enhanced dissipation both have been introduced in \cite{MZ19} and hence the details of the construction are omitted for conciseness.
\end{remark}

\begin{remark}
	After this paper is complete, we found a paper of Li, Masmoudi and Zhao [arXiv:2203.10894v1] submitted to arXiv.org on 21 March.  Their paper also deals with the stability of the shear flows near Couette flow of the Navier-Stokes equation or the Euler equation. However, their results are different from ours in several aspects. We refer the interested readers to their paper \cite{LMZ22}.  
\end{remark}

The main contribution is two folds. The background shear flow in this paper is quite general, although near the Couette in the Sobolev spaces. This yields an extra term $\bar U''\partial_x\varphi$ in the vorticity equation. In the standard Couette flow case, the vorticity equation lacks this linear source term and the Biot-Savart law is a Fourier multiplier in the new variables. Secondly and more importantly, the main contribution is a new change of coordinate to serve our purpose that we are aimed to obtain an improved stability  threshold $\gamma=1/3$ for a general shear flow. Indeed, the change of coordinate in \cite{MZ19} is direct since they were treating a Couette flow, but cannot serve our purpose well in this paper as we are treating a general shear flow near Couette flow. On the other hand, the change of coordinate in \cite{BVW18} cannot directly serve our purpose neither, as there the authors only obtain a threshold $\gamma=1/2$, and we are aimed to obtain an improved stability threshold $\gamma=1/3$.  The change of coordinate we finally adopt is the one in \eqref{changeofcoordinate}, combining the ones utilized in \cite{BGM17} and  \cite{MZ19} in spirit, as well as the one in \cite{BVW18}. After long and tedious argument, we find that this change of coordinate serves our purpose perfectly. Some estimates and the multipliers are similar to those presented in \cite{MZ19} and therefore, we will not give the details of those estimates that have already presented in their paper, but just give a sketch if necessary, to shorten the length of this paper. 

We would also like to remark that by this type of change of coordinate, we may hope to obtain a stability threshold no greater than $\gamma=3/2$ of a shear flow near the Couette flow for the 3D Navier-Stokes equation in Sobolev space, thus generalizing the result of Bedrossian, Germain and Masmoudi \cite{BGM17}. We may also improve the stability threshold to $\gamma=1/3$ of a general shear flow near the Couette flow for the 2D Boussinesq equation, whose stability was shown to be $\gamma=1/2$ for velocity field in Sobolev spaces of the Couette flow \cite{Zill20} as well as for a shear flow near the Couette flow \cite{BP21}. These are left for further work in short future.

This paper is organized as follows. In Section 2, we prove Theorem \ref{thm1.1} by assuming several estimates that will be proved in the subsequent sections. In Sections 3 to 9, we prove these estimates.

\section{Proof of Theorem \ref{thm1.1}}
\setcounter{equation}{0}

\subsection{Coordinate transform}
Set the coordinate transform
\begin{equation}\label{changeofcoordinate}
\begin{cases}
X=X(t,x,y)=x-t\vartheta(t,y)=x-t\bar U(t,y)-t\psi(t,y),\\
Y=Y(t,y)=\vartheta(t,y)=\bar U(t,y)+\psi(t,y).
\end{cases}
\end{equation}
where $\bar U(t,y)$ is the solution of the one dimensional heat equation
$$\partial_t\bar U(t,y)=\nu\partial_{yy}\bar U(t,y),\ \ \ \ \bar U(0,y)=U(y).$$
and $\psi$ in the coordinate transform satisfies the following
\begin{equation}
\begin{cases}
\displaystyle(\partial_t-\nu\partial_{yy})(t\psi)=\la v^x\ra  =\frac{1}{2\pi}\int_{\Bbb T} v^x(t,x,y)dx,  \\
\lim_{t\to0}(t\psi)=0.
\end{cases}
\end{equation}
We note that when $U(y)=y$ is the Couette flow, this change of coordinate reduces to the one employed in \cite{BGM17,MZ19} where the authors obtained stability threshold $\gamma=3/2$ for Couette flow of the 3D Navier-Stokes equation \cite{BGM17} and threshold $\gamma=1/3$ for Couette flow of the 2D Navier-Stokes equation \cite{MZ19}, and if we set $\psi\equiv0$, then this change of coordinate reduces to the one used in \cite{BVW18}, where the authors obtained stability threshold $\gamma=1/2$ for shear flow near Couette.

Set
$$F(t,X,Y)=f(t,x,y),$$
then
\begin{equation}
\begin{split}
\nabla f(t,x,y)= & \left(
      \begin{array}{c}
       \partial_xf\\
        \partial_yf \\
      \end{array}
 \right)= \left(
      \begin{array}{c}
       \partial_XF\\
        (\bar U'+\partial_y\psi)(\partial_Y-t\partial_X)F
      \end{array}
 \right)\\
 = & \left(
      \begin{array}{c}
       \partial_X^t F\\
        \partial_Y^tF
      \end{array}
 \right) =\nabla^t F(t,X,Y).
\end{split}
\end{equation}
Therefore (see \eqref{3.4})
$$\partial_y\bar U(t,y)=(\bar U'+\partial_y\psi)\partial_Y\bar U(t,Y),\ \ \ \partial_Y\bar U(t,Y)=(\bar U'+\partial_y\psi)^{-1}\partial_y\bar U(t,y).$$
Set 
\begin{equation}
\begin{split}
\nabla_L=  \left(
      \begin{array}{c}
       \partial_X\\
        \partial_Y-t\partial_X
      \end{array}
 \right) =  \left(
      \begin{array}{c}
       \partial_X\\
        \partial_Y^L
      \end{array}
 \right),
\end{split}
\end{equation}
\begin{equation}
\begin{split}
\Delta_L= \nabla_L\cdot\nabla_L=  \partial_X^2+(\partial_Y^L)^2,
\end{split}
\end{equation}
$$\widetilde {\Delta_t}=\partial_{XX}+(\bar U'+\partial_y\psi)^2\partial_{YY}^L,$$
and
\begin{equation}
\begin{split}
G=(\bar U'+\partial_y\psi)^2 -(\bar U')^2.
\end{split}
\end{equation}
The Laplacian operator $\Delta$ transforms into
\begin{equation}
\begin{split}
\Delta f= \Delta_t F=\left( (\partial_X)^2+(\partial_Y^t)^2\right)F =\Delta_LF+G\partial_{YY}^LF+\Delta_tC\partial_Y^LF+b\partial_Y^LF,
\end{split}
\end{equation}
where $\Delta_t C=\partial_{yy}\psi$ and $b=\bar U''$. It is also obvious that for a function $F(t,Y)$ independent of $X$, one has $$\widetilde {\Delta_t}F=(\bar U'+\partial_y\psi)^2\partial_{YY}F.$$

Set $\Omega(t,X,Y)=\omega(t,x,y)$ and $C(t,Y)=\psi(t,y)$, then 
\begin{equation}\label{e2.18}
\begin{cases}
\Omega_t+u(t,X,Y)\cdot\nabla_{X,Y}\Omega= \bar U''\partial_X\phi +\nu\widetilde {\Delta_t}\Omega,\\
\Delta_t\phi=\Omega,\\
u(t,X,Y)=\left(
      \begin{array}{c}
         0\\
         g\\
      \end{array}
 \right) +(\bar U'+\partial_y\psi) \nabla^{\bot}_{X,Y}\phi_{\ne},
\end{cases}
\end{equation}
where
$$\bar U'(t,Y)=\partial_y\bar U(t,y), \ \ \ \bar U''(t,Y)=\partial_{yy}\bar U(t,y),$$
and
\begin{equation}
\begin{split}
g=\frac1t(V^X_0(t,Y)-C(t,Y)).
\end{split}
\end{equation}
As a function of $(t,Y)$, $g$ evolves according to the following
\begin{equation}\label{gg}
\begin{split}
g_t+\frac{2g}t+g\partial_Yg=-\frac{(\bar U'+\partial_y\psi)}t\langle\nabla^{\bot}_{X,Y}\phi_{\ne}\cdot\nabla_{X,Y}V^X\rangle+  \nu\widetilde {\Delta_t}g.
\end{split}
\end{equation}
Set $h=\partial_y\psi(t,y)$, then $h$, as a function of $(t,Y)$, evolves according to
\begin{equation}\label{hh}
\begin{split}
h_t+g\partial_Yh=\bar h+ \nu\widetilde {\Delta_t}h,
\end{split}
\end{equation}
where $\bar h=-\frac{f_0+h}{t}$ satisfies
\begin{equation}\label{barh}
\begin{split}
\bar h_t+\frac2t\bar h+g\partial_Y\bar h=\frac{(\bar U'+\partial_y\psi)}t\langle\nabla^{\bot}_{X,Y}\phi_{\ne}\cdot\nabla_{X,Y}\Omega\rangle+  \nu\widetilde {\Delta_t}\bar h.
\end{split}
\end{equation}
For later purpose, we set $V^X(t,X,Y)=v^x(t,x,y)$, and by the Biot-Savart law, the $X$-component of velocity can be expressed as   
\begin{equation}\label{u1}
\begin{split}
V^X=-(\bar U'+\partial_y\psi)\partial_Y^L\phi= -(\bar U'+\partial_y\psi)(\partial_Y-t\partial_X)\phi.
\end{split}
\end{equation}

%
%
%

\subsection{Multipliers and notations}

The Fourier variable of a function $f(X,Y)$ is taken to be $(k,\eta)\in \Bbb Z\times \Bbb R$, i.e.,
$$\mathcal F(f)(k,\eta)= \widehat f(k,\eta)= \widehat{f}_k(\eta) =\frac{1}{2\pi}\int_{\Bbb T\times \bf R}e^{-i(kX+\eta Y)}f(X,Y)dXdY,$$
with Fourier inverse formula
$$f(X,Y)=\frac{1}{2\pi}\sum_{k\in\Bbb Z}\int_{\bf R}\widehat{f}_k(\eta)e^{i(kX+\eta Y)}d\eta.$$
In this way, we define $|k,\eta|=|k|+|\eta|$ and $\la \xi\ra=(1+|\xi|^2)^{1/2}$ for a scalar or a vector $\xi\in\Bbb R^n$. The $x$-average of a function $\la f\ra=f_0=\frac{1}{2\pi}\int f(x,y)dx$ and $f_{\neq}:=f-f_0$. The standard $H^{\sigma}$ norm is given by $\|f\|_{H^{\sigma}}=\|\la \xi\ra^{\sigma}\widehat f\|_{L^2}$ and the space-time Sobolev space $L^p_TH^{\sigma}$ is defined in the standard way. 

Set $\Bbb D=\{1/2,1,2,4,8,\cdots,2^j,\cdots\}$. Set $\sigma\geq40$ and define the time dependent norm as
$$\|A^{\sigma}(t,\nabla)\Omega\|^2_2=\sum_{k}\int_{\eta} |A_k^{\sigma}(t,\eta)\hat\Omega_k(t,\eta)|^2d\eta,\ \ \ A_k^{\sigma}(t,\eta)=\frac{\la k,\eta\ra^{\sigma}}{w_k(t,\eta)},$$
where the definition of $w_k(t,\eta)$ is lengthy and is outlined in Appendix. By definition of $w_k(t,\eta)$, we have $A^{\sigma}_k(t,\eta)\approx \la k,\eta\ra^{\sigma}$. Introduce another time dependent norm for $8\leq s\leq \sigma-10$,
$$\|A_E^{s}(t,\partial_k,\partial_Y)\Omega\|^2_2=\sum_{k\neq0}\int_{\eta} |A_E^{s}(t,k,\eta)\hat\Omega_k(t,\eta)|^2d\eta,$$
which quantifies the enhanced dissipation effect with $A_E^{s}(t,k,\eta)=\la k,\eta\ra^s D(t,\eta)$ and $D(t,\eta)=\frac{1}{3}\nu|\eta|^3+ \frac{1}{24}\nu(t^3-8|\eta|^3)_+.$


\subsection{Main energy estimate}
For the systems \eqref{e2.18}, \eqref{gg} and \eqref{barh}, we define the Sobolev energy
$$\mathcal E^{\sigma}(t)=\frac12\|A^{\sigma}(t)\Omega(t)\|^2_2+\mathcal E_v(t),$$
with
$$\mathcal E_v(t)=\|g(t)\|_{H^{\sigma}}^2+ \nu^{1/3}\|h(t)\|_{H^{\sigma}}^2+ \nu^{1/3}\|\bar h(t)\|_{H^{\sigma}}^2+ \|h(t)\|_{H^{\sigma-1}}^2+ \|\bar h(t)\|_{H^{\sigma-1}}^2.$$
We have the following Lemma, due to the well-posedness theory for the 2D Navier-Stokes  equation in Sobolev spaces, and we can ignore the time interval $[0,1]$ by further restricting the size of the initial data. 
\begin{lemma}
For $\epsilon>0,\nu>0$ and $\sigma\geq40$, there exists $\epsilon'>0$ such that if $\|\Omega_{in}\|_{H^{\sigma}}\leq \epsilon'\nu^{1/3}$, then $$\sup_{t\in[0,1]}\mathcal E^{\sigma}(t)\leq (\epsilon\nu^{1/3})^2.$$
\end{lemma}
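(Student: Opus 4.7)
The statement is a short-time regularity result on $[0,1]$ whose proof reduces to classical well-posedness of the 2D Navier-Stokes equation combined with the fact that the new coordinates $(X,Y)$ differ from $(x,y)$ only by a smooth near-identity map on any fixed time window. The plan is to work first in the original variables, transfer the resulting bounds to the transformed vorticity $\Omega$, and then use the evolution equations for $g$, $h$, $\bar h$ to close the estimate of $\mathcal{E}_v$.

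\emph{Step 1 (local well-posedness in original coordinates).} By the classical $H^\sigma$ local well-posedness theory for 2D Navier-Stokes, there is a constant $C_\sigma$ such that any initial perturbation with $\|\omega_{in}\|_{H^\sigma}\leq \epsilon'\nu^{1/3}$ yields a solution $\omega(t)$ of the original perturbation equation on $[0,1]$ with $\sup_{t\in[0,1]}\|\omega(t)\|_{H^\sigma}\leq C_\sigma \epsilon'\nu^{1/3}$, provided $\epsilon'$ is chosen small enough (depending on $\sigma$ and on the background bounds $\|U(y)-y\|_{H^\sigma}\leq\epsilon\nu^{1/3}$). The time interval $[0,1]$ is fixed, so all implicit constants coming from Gr\"onwall-type arguments are bounded by absolute constants.

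\emph{Step 2 (coordinate transfer).} On $[0,1]$ the functions $\bar U'(t,y)-1$ and $\psi(t,y)$ remain small in $H^\sigma$: the first by the heat-equation bound on $\bar U$ starting from $\|U(y)-y\|_{H^\sigma}\leq \epsilon\nu^{1/3}$, the second by integrating the defining equation $(\partial_t-\nu\partial_{yy})(t\psi)=\langle v^x\rangle$ from $t=0$, using the small bound on $v^x$ from Step~1. Consequently, the change of variables \eqref{changeofcoordinate} is a near-identity $H^\sigma$ diffeomorphism with Jacobian uniformly bounded above and below on $[0,1]$, and the standard Sobolev composition estimates give $\|\Omega(t)\|_{H^\sigma}\leq C\|\omega(t)\|_{H^\sigma}$ with $C$ absolute. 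Since $A_k^\sigma(t,\eta)\approx \langle k,\eta\rangle^\sigma$ by construction, one has $\|A^\sigma(t)\Omega(t)\|_2 \leq C\|\Omega(t)\|_{H^\sigma} \leq C\epsilon'\nu^{1/3}$ on $[0,1]$.

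\emph{Step 3 (control of $\mathcal{E}_v$).} For $g$ and $\bar h$ the apparent singularity at $t=0$ is removable: from the defining equations and the initial conditions $t\psi|_{t=0}=0$, $\bar U'|_{t=0}=U'$, a short Taylor analysis shows $V^X_0-C=O(t^2)$ and $f_0+h=O(t^2)$ as $t\to 0$, so $g$ and $\bar h$ extend continuously to $t=0$ and are $O(\epsilon\nu^{1/3})$ in $H^\sigma$. Then applying $A^{\sigma-1}$ or $A^\sigma$ to \eqref{gg}, \eqref{hh}, \eqref{barh} and running a standard energy estimate with Gr\"onwall on $[0,1]$ — where the $1/t$ factors are integrable after one uses the cancellations above — yields $\mathcal{E}_v(t)\leq C(\epsilon'\nu^{1/3})^2$ on $[0,1]$.

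\emph{Step 4 (conclusion).} Combining Steps 2 and 3 produces $\mathcal{E}^\sigma(t)\leq C(\epsilon'\nu^{1/3})^2$ on $[0,1]$ with $C$ an absolute constant depending only on $\sigma$. Choosing $\epsilon' \leq \epsilon/\sqrt{C}$ concludes the lemma.

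\emph{Main obstacle.} The only non-routine point is Step~3: one must verify that the formally singular factors $1/t$ in the definitions of $g$ and $\bar h$, and in their evolution equations, are compensated by the vanishing of the numerators at $t=0$ so that these auxiliary quantities are bounded in $H^\sigma$ up to $t=0$. Everything else is classical 2D Navier-Stokes short-time theory together with bookkeeping for the coordinate change, which is benign on the fixed interval $[0,1]$.
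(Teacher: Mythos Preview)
Your proposal is correct and matches the paper's approach exactly: the paper does not give a detailed proof of this lemma at all, stating only that it follows ``due to the well-posedness theory for the 2D Navier-Stokes equation in Sobolev spaces, and we can ignore the time interval $[0,1]$ by further restricting the size of the initial data.'' Your four-step outline is precisely the unpacking of that sentence, so there is nothing to compare --- you have simply supplied the details the paper omits.

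One minor correction in Step~3: the Taylor analysis gives $V^X_0 - C = O(t)$ and $f_0 + h = O(t)$ as $t\to 0$, not $O(t^2)$. Indeed, $\psi(t,y) = \frac{1}{t}\int_0^t e^{\nu(t-\tau)\partial_y^2}\langle v^x\rangle(\tau)\,d\tau$ satisfies $\psi(0)=\langle v^x\rangle(0)$, and the difference $\langle v^x\rangle(t)-\psi(t)$ is a first-order quantity in $t$ (instantaneous value minus time average). This is all that is needed, since $g$ and $\bar h$ each carry only one factor of $1/t$, so $O(t)$ in the numerator already makes them bounded at $t=0$.
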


In the sequel, we set the following system of bootstrap hypotheses for $t\geq1$.

\begin{itemize}
  \item {Higher regularity for the main system}
\begin{equation}\label{e2.22}
\begin{split}
\|A^{\sigma}\Omega(t)\|^2_2+\nu\int_1^t \|\sqrt{-\Delta_L}A^{\sigma}\Omega(\tau)\|^2_2d\tau +\int_1^tCK_w(\tau)d\tau \leq (8\epsilon\nu^{1/3})^2,
\end{split}
\end{equation}
where
$$CK_w(t):=\sum_{k}\int \frac{\partial_tw_k(t,\eta)}{w_k(t,\eta)} |A_k^{\sigma}(t,\eta)\hat\Omega_k(t,\eta)|^2d\eta.$$
\end{itemize}

\begin{itemize}
  \item  {Higher regularity for the coordinate system}
\begin{equation}\label{BS2.2}
\begin{split}
&\la t\ra\|g\|_{H^{\sigma}}+\int_1^t\|g(\tau)\|_{H^{\sigma}} d\tau \leq 8\epsilon\nu^{1/3},\\
& t^3\|A^{\sigma}\bar h(t)\|^2_2+\int_1^t \tau^3\|\sqrt{\frac{\partial_tw}{w}}A^{\sigma}\bar h(\tau)\|^2_2d\tau +\int_1^tCK_w(\tau)d\tau\\
&\ \ \ \ \ \ \ \ \ \ \ \ \ \ \ \  +\frac14\int_1^t\tau^2\|A^{\sigma}\bar h(\tau)\|^2_2d\tau +\frac\nu4\int_1^t\tau^3\|\partial_YA^{\sigma}\bar h(\tau)\|^2_2d\tau\leq 8\epsilon(\epsilon\nu^{1/6})^2,\\
& \|h(t)\|^2_{H^{\sigma}} +\nu\int_1^t\|\partial_Yh(\tau)\|^2_{H^{\sigma}}d\tau\leq 8(10\epsilon\nu^{1/6})^2.
\end{split}
\end{equation}
\end{itemize}

\begin{itemize}
  \item  {Lower regularity: enhanced dissipation}
\begin{equation}
\begin{split}
\|A_E^{s}\Omega(t)\|^2_2+\frac25\nu\int_1^t \|\sqrt{-\Delta_L}A_E^{s}\Omega(\tau)\|^2_2d\tau \leq (8\epsilon\nu^{1/3})^2.
\end{split}
\end{equation}
\end{itemize}

\begin{itemize}
  \item   {Lower regularity: decay of zero mode}
\begin{equation}\label{e2.25}
\begin{split}
&\la t\ra^4\|g\|^2_{H^{\sigma-6}} +\nu\int_1^t\tau^4\|\partial_Yg(\tau)\|^2_{H^{\sigma-6}} d\tau \leq (8\epsilon\nu^{1/3})^2,\\
& \la t\ra^4\|\bar h(t)\|^2_{H^{\sigma-6}} +\nu\int_1^t\tau^4\|\partial_Y\bar h(\tau)\|^2_{H^{\sigma-6}} d\tau \leq (8\epsilon\nu^{1/3})^2, \\
& \|\Omega_0\|^2_{H^{s}} +\frac{t\nu}{2}\|\partial_Y\Omega_0\|^2_{H^{s}}  +\nu\int_1^t\left(\|\partial_Y\Omega_0(\tau)\|^2_{H^{s}} +\frac{\tau\nu}{2}\|\partial_Y\Omega_0(\tau)\|^2_{H^{s}}\right) d\tau \leq (8\epsilon\nu^{1/3})^2.
\end{split}
\end{equation}
\end{itemize}

\begin{itemize}
  \item   {Assistant estimate}
\begin{equation}
\begin{split}
& \la t\ra\|\bar h(t)\|_{H^{\sigma-1}} +\int_1^t\|\bar h(\tau)\|_{H^{\sigma-1}} d\tau \leq 8\epsilon\nu^{1/3}, \\
& \|h(t)\|^2_{H^{\sigma-1}} +\nu\int_1^t\|\partial_Yh(\tau)\|^2_{H^{\sigma-1}} d\tau \leq 8(10\epsilon\nu^{1/3})^2.
\end{split}
\end{equation}
\end{itemize}

The following proposition follows from the hypotheses, elliptic estimates and the properties of the multipliers $A^s_E$ and $A^{\sigma}$.
\begin{prop}
Under the bootstrap hypotheses, the following inequalities hold
\begin{equation}
\begin{split}
&   \|\Omega\|_{H^{\sigma}}  +\nu^{1/2} \|\sqrt{-\Delta_L}\Omega\|_{L^2_TH^{\sigma}}  +\|\sqrt{\frac{\partial_tw}{w}}\Omega\|_{L^2_TH^{\sigma}}  \lesssim \epsilon\nu^{1/3}, \\
&   \|\Omega_{\neq}\|_{H^{s}}  +\nu^{1/2} \|\sqrt{-\Delta_L}\Omega_{\neq}\|_{L^2_TH^{s}}  \lesssim \frac{\epsilon\nu^{1/3}}{\la \nu t^3\ra},\ \ \\
&   \|\phi_{\neq}\|_{H^{\sigma-4}}\lesssim     \frac{\epsilon\nu^{1/3}}{\la t^2\ra}, \ \ \ \ \  \|V^X_{\neq}\|_{H^{\sigma-3}}\lesssim \frac{\epsilon\nu^{1/3}}{\la t\ra}.
\end{split}
\end{equation}
\end{prop}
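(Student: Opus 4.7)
The plan is to read off each of the four bounds from the corresponding groups of bootstrap hypotheses, using two structural facts that are independent of the dynamics: first, the multiplier $A^{\sigma}_k(t,\eta)$ is comparable to $\la k,\eta\ra^{\sigma}$ by construction of the weight $w_k$ (so $\|\cdot\|_{H^{\sigma}}$ and $\|A^{\sigma}\cdot\|_2$ are equivalent norms uniformly in $t$); and second, the elliptic operator $\Delta_t$ appearing in the Biot-Savart law $\Delta_t\phi=\Omega$ is, up to coefficients $G$, $\Delta_tC=\partial_{yy}\psi$, and $b=\bar U''$ controlled by \eqref{BS2.2}--\eqref{e2.25}, a small perturbation of $\Delta_L=\partial_X^2+(\partial_Y-t\partial_X)^2$, whose inverse on nonzero $x$-frequencies is the Fourier multiplier $-(k^2+(\eta-kt)^2)^{-1}$. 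For the first estimate I would simply substitute $A^{\sigma}_k\approx\la k,\eta\ra^{\sigma}$ into \eqref{e2.22}, which directly yields $\|\Omega\|_{H^{\sigma}}\lesssim\epsilon\nu^{1/3}$ as well as the dissipation term $\nu^{1/2}\|\sqrt{-\Delta_L}\Omega\|_{L^2_TH^{\sigma}}\lesssim\epsilon\nu^{1/3}$ and the Cauchy--Kovalevskaya-type term $\|\sqrt{\partial_tw/w}\,\Omega\|_{L^2_TH^{\sigma}}\lesssim\epsilon\nu^{1/3}$.

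For the enhanced-dissipation bound on $\Omega_{\neq}$, I would combine the first estimate with the hypothesis $\|A^s_E\Omega\|_2\lesssim\epsilon\nu^{1/3}$. Writing $A^s_E(t,k,\eta)=\la k,\eta\ra^{s}D(t,\eta)$ with $D(t,\eta)=\tfrac{1}{3}\nu|\eta|^3+\tfrac{1}{24}\nu(t^3-8|\eta|^3)_+$, a case split on whether $t\ge 2|\eta|$ or $t<2|\eta|$ gives the lower bound $D(t,\eta)\gtrsim\nu t^3$ uniformly in $\eta$. Hence for any $k\ne 0$, $\la k,\eta\ra^s|\hat\Omega_k|\lesssim|A^s_E\hat\Omega_k|/(\nu t^3)$, whereas the first estimate directly provides $\|\Omega_{\neq}\|_{H^s}\lesssim\epsilon\nu^{1/3}$; taking the minimum of the two bounds produces the desired $\lesssim\epsilon\nu^{1/3}/\la\nu t^3\ra$. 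The same argument, applied to the second half of the enhanced-dissipation hypothesis, controls $\nu^{1/2}\|\sqrt{-\Delta_L}\Omega_{\neq}\|_{L^2_TH^s}$.

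For the two elliptic bounds, I would first invert $\Delta_L$ on the nonzero $x$-modes, on which the classical inviscid-damping estimates apply: using $\la k,\eta\ra^2\lesssim\la t\ra^{2}\la k,\eta-kt\ra^2$ for $k\ne 0$, one trades two derivatives of $\Omega_{\neq}$ for a gain $\la t\ra^{-2}$ in $\phi_{\neq}$, and one derivative for $\la t\ra^{-1}$ in $\partial_Y^L\phi_{\neq}$. Combined with $\|\Omega_{\neq}\|_{H^{\sigma-2}}\lesssim\epsilon\nu^{1/3}$ from the first step, this gives the target decay rates for $\phi_{\neq}$, and then $V^X_{\neq}=-(\bar U'+\partial_y\psi)\partial_Y^L\phi_{\neq}$ is handled by Sobolev product estimates together with the smallness of $\bar U'-1$ and $\partial_y\psi=h$ in $H^{\sigma-1}$ recorded in \eqref{BS2.2} and the initial assumption on $U-y$. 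The main obstacle will be justifying that one really inverts $\Delta_t$ and not just $\Delta_L$: the correction $(G\partial_{YY}^L+\Delta_tC\,\partial_Y^L+b\,\partial_Y^L)$ must be treated as a perturbation of $\Delta_L$ on nonzero $x$-frequencies by a Neumann series argument, whose convergence relies on the smallness of $G$, $\partial_{yy}\psi$, and $\bar U''$ provided by the bootstrap on $h,\bar h$ and the hypothesis $\|U-y\|_{H^{\sigma}}\le\epsilon\nu^{1/3}$. This perturbative step is what produces the derivative losses from $\sigma$ down to $\sigma-4$ in the bound on $\phi_{\neq}$ and to $\sigma-3$ in the bound on $V^X_{\neq}$.
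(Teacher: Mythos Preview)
Your proposal is correct and follows essentially the same route as the paper. The paper's own proof is a single sentence invoking exactly the ingredients you list: the equivalence $A^{\sigma}_k(t,\eta)\approx\la k,\eta\ra^{\sigma}$ for the first line, the lower bound $D(t,\eta)\gtrsim\nu t^3$ for the enhanced-dissipation line, and Lemmas~\ref{lem3.1} and~\ref{lem4.2} (which encapsulate precisely the Neumann-series perturbation of $\Delta_L$ and the product estimate for $V^X_{\neq}$ that you sketch) for the last two bounds.
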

This can be proved as in \cite{MZ19}, by noting that $A_k^{\sigma}(t,\eta)\approx \la k,\eta\ra^{\sigma}$, $D(t,\eta)\geq \nu t^3$, Lemma \ref{lem3.1} and Lemma \ref{lem4.2}.

Using this result and the note on \cite[P.4]{MZ19}, we can prove Theorem \ref{thm1.1}. To employ the continuation method, we will show that under these bootstrap hypotheses, these bounds can be improved by a even smaller bounds. We will prove the following Proposition.
\begin{prop}
For $\sigma\geq40$, $\nu>0$ and $8\leq s\leq \sigma-10$, there exists $0<\epsilon_0,\nu_0<1$, such that for all $0<\nu<\nu_0$ and $0<\epsilon<\epsilon_0$, such that the bootstrap hypotheses implies themselves with $8$ replaced by $6$.
\end{prop}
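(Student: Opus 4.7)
The plan is a bootstrap/continuation argument. Let $T^{\star}>1$ be the maximal time on which the hypotheses \eqref{e2.22}--\eqref{e2.25} and the assistant estimate hold with constant $8$; on $[1,T^{\star}]$ I upgrade each of those bounds to the same bound with constant $6$, so that $T^{\star}=\infty$ by continuity. Five coupled estimate packages must be proved, one per bullet. Each package follows the same template: apply the prescribed multiplier ($A^{\sigma}$, $A^s_E$, or a polynomial weight in $\la t\ra$), pair with the relevant unknown in $L^2_{X,Y}$, read off the good $CK_w$ term from $\partial_t w/w$ and the diffusion from $\nu\widetilde{\Delta_t}$, and bound the remainders using the Proposition of the preceding subsection together with the hypotheses themselves, each time paying the loss with a small factor of $\epsilon$ or $\nu^{1/3}$.

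For the top regularity \eqref{e2.22}, I apply $A^{\sigma}$ to \eqref{e2.18} and pair with $A^{\sigma}\Omega$. The transport by $u$ is handled through the paraproduct decomposition of \cite{MZ19}: a high-low piece that reconstructs the $CK_w$ contribution via the commutator with $g\partial_Y$, a low-high piece controlled by $\la t\ra\|g\|_{H^{\sigma}}\lesssim\epsilon\nu^{1/3}$, and a remainder absorbed by the dissipation. The metric corrections $G$, $\Delta_t C$, $b$ inside $\widetilde{\Delta_t}$ are treated perturbatively off $\Delta_L$, their size being $O(\epsilon\nu^{1/6})$ by \eqref{BS2.2}, giving errors with an extra $\nu^{1/6}$ gain that absorb. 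The enhanced-dissipation estimate for $A^s_E\Omega$ is run analogously, using that $\partial_t D(t,\eta)\gtrsim \nu t^2$ for $t\gtrsim|\eta|$ produces the growing dissipation that beats the polynomial regularity loss; this is exactly the mechanism of \cite{MZ19} and will be only sketched.

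For the coordinate-system bounds \eqref{BS2.2}, I apply $A^{\sigma}$ to \eqref{gg}, \eqref{barh}, \eqref{hh} and pair against a time-weighted unknown. The structural $2/t$ in the $g$-equation, multiplied by $\la t\ra$, yields the $\la t\ra\|g\|_{H^{\sigma}}$ decay and integrably closes against the right-hand side $\frac{\bar U'+\partial_y\psi}{t}\langle\nabla_{X,Y}^{\bot}\phi_{\neq}\cdot\nabla_{X,Y}V^X\rangle$, using $\|V^X_{\neq}\|_{H^{\sigma-3}}\lesssim\epsilon\nu^{1/3}\la t\ra^{-1}$ and $\|\phi_{\neq}\|_{H^{\sigma-4}}\lesssim\epsilon\nu^{1/3}\la t\ra^{-2}$. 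The same $2/t$ in the $\bar h$-equation, paired with weight $t^3$, produces the coercive $\tfrac14\int\tau^2\|A^{\sigma}\bar h\|_2^2 d\tau$ that dominates the source $\frac{\bar U'+\partial_y\psi}{t}\langle\nabla_{X,Y}^{\bot}\phi_{\neq}\cdot\nabla_{X,Y}\Omega\rangle$. The $h$ estimate is simplest since \eqref{hh} has only the forcing $\bar h$ and metric dissipation, and closes at regularity $\nu^{1/6}$. The zero-mode decay \eqref{e2.25} uses a $\la t\ra^4$ weight in the same spirit, with the nonlinear interactions integrated through $\la\nu t^3\ra^{-1}$ enhanced dissipation and $\la t\ra^{-2}$ inviscid damping of $\phi_{\neq}$.

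The main obstacle, absent in \cite{MZ19}, is the new linear source $\bar U''\partial_X\phi$ in \eqref{e2.18}: it sits at the top regularity level and cannot be absorbed into the transport commutator. The assumption $\|U(y)-y\|_{L^1}\le\epsilon\nu^{5/4}$ is engineered precisely to defeat it. Since $\bar U-y=e^{\nu t\partial_y^2}(U-y)$ solves the heat equation, the $L^1\to L^{\infty}$ smoothing gives $\|\bar U''(t,\cdot)\|_{L^{\infty}_y}\lesssim \|U-y\|_{L^1}(\nu t)^{-3/2}\lesssim \epsilon\nu^{1/3}\cdot(\nu t)^{-3/2}\cdot\nu^{11/12}$, which is integrable in $t\ge 1$ and introduces a small $\nu$-power gain; combining with $\|\partial_X\phi\|_{H^{\sigma-3}}\lesssim \epsilon\nu^{1/3}\la t\ra^{-1}$ and $\|A^{\sigma}\Omega\|_2\lesssim \epsilon\nu^{1/3}$ the source contributes strictly less than $(\epsilon\nu^{1/3})^2$, which is what the bootstrap improvement requires. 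The same mechanism disposes of the analogous source in the $A^s_E$ estimate and in the zero-mode equations for $g$ and $\bar h$. A secondary difficulty is the proliferation of $\psi$-dependent metric coefficients $G$ and $b$ in $\widetilde{\Delta_t}$, which were not present in either \cite{MZ19} or \cite{BVW18}; these require the assistant estimate to be propagated in lockstep with \eqref{BS2.2}, but once the latter is available they introduce only perturbatively small errors.
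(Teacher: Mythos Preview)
Your overall architecture matches the paper's: bootstrap continuation, $A^{\sigma}$ and $A_E^s$ energy identities, paraproduct commutators for the transport term, perturbative treatment of the metric coefficients in $\widetilde{\Delta_t}$, and time-weighted estimates on $g,h,\bar h$. Those pieces are essentially the same and your sketch of them is adequate.

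The gap is in your handling of the new source $\bar U''\partial_X\phi$. You propose to use $L^1\to L^\infty$ heat smoothing, $\|\bar U''\|_{L^\infty_y}\lesssim(\nu t)^{-3/2}\|U-y\|_{L^1}$, and pair it with $\|\partial_X\phi\|_{H^{\sigma-3}}\lesssim\epsilon\nu^{1/3}\la t\ra^{-1}$. This does not control the term $\int A^{\sigma}\Omega\,A^{\sigma}(\bar U''\partial_X\phi)$: the multiplier $A^{\sigma}\approx\la k,\eta\ra^{\sigma}$ forces you to estimate the product $\bar U''\partial_X\phi$ at the full $H^{\sigma}$ level, and $L^\infty\times H^{\sigma-3}$ only yields $H^{\sigma-3}$. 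You are three derivatives short, and there is no mechanism in your sketch to recover them. Moreover, even if you had replaced $H^{\sigma-3}$ by $H^{\sigma}$, the accounting does not close: $\int_1^\infty\|\bar U''\|_{L^\infty}\,d\tau\gtrsim\epsilon\nu^{-1/4}$, which diverges as $\nu\to0$, so you would need the extra $\la t\ra^{-1}$ decay from $\partial_X\phi$ --- but that decay is only available below the top regularity (Lemma~\ref{lem3.1}), not at $H^{\sigma}$.

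The paper resolves this differently. It applies a Bony decomposition $S=S^{LH}+S^{HL}+S^R$ to the source and, in the pieces where $\partial_X\phi$ carries the high frequency, writes $\partial_X\phi=\partial_X\Delta_L^{-1}(\Delta_L\Delta_t^{-1}\Omega_{\neq})$; since $|\partial_X\Delta_L^{-1}|\le1$ for $k\neq0$, the top-order elliptic Lemma~\ref{lem3.3} returns control in $H^{\sigma}$ by $\|\Omega_{\neq}\|_{H^{\sigma}}$ plus a harmless remainder. The factor $\bar U''$ is then estimated in $H^3$ (or $H^{\sigma}$ in the $HL$ piece) using the $L^1\to H^N$ smoothing $\|\bar U''\|_{H^N}\lesssim\la\nu t\ra^{-5/4}\|U-y\|_{L^1}$, which under $\|U-y\|_{L^1}\le\epsilon\nu^{5/4}$ integrates to $\int_1^{\infty}\|\bar U''\|_{H^N}\,d\tau\lesssim\epsilon$ and produces a source contribution of order $C\epsilon\sup\|A^{\sigma}\Omega\|_2^2+\epsilon^4\nu^{7/12}(8\epsilon\nu^{1/3})^2$. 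The same $L^1\to H^N$ decay (not $L^\infty$) is what makes the $A_E^s$ source term close; see \eqref{eE4}, where $\|\bar U''\|_{H^{s+4}}$ appears.
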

\begin{proof}
In the rest of this section, we give an outline of the proof of the higher regularity for the main system. From the evolution of $\mathcal E_{H,\Omega}=\frac12\|A^{\sigma}\Omega(t)\|^2$, we have
\begin{equation}\label{eq2.1}
\begin{split}
\frac12\frac{d}{dt}\int_{\Bbb T\times\bf R}   &   |A^{\sigma}\Omega(t)|^2dXdY     +\underbrace{\sum_{k}\int\frac{\partial_tw_k(t,\eta)} {w_k(t,\eta)} |A^{\sigma}_k(t,\eta)\hat\Omega_k(t,\eta)|^2d\eta}_{CK_{w}}     =  \nu\int A^{\sigma}\Omega A^{\sigma}(\tilde\Delta_t\Omega)dXdY\\
&  -\int A^{\sigma}\Omega A^{\sigma}(u\cdot\nabla_{X,Y}\Omega)dXdY    +\int A^{\sigma}\Omega A^{\sigma}(\bar U''\partial_X\phi)dXdY= DE(t) -CV(t) +S(t),
\end{split}
\end{equation}
where the first term on the r.h.s. is usually referred to as the \emph{Cauchy-Kovalevshaya} term. {We aim to show that after integration in time $[1,t]$, it holds
\begin{equation*}
\begin{split}
\|A^{\sigma}\Omega(t)\|^2_2 & +2\int_1^tCK_{w}(\tau)d\tau +\frac74\int_1^t\nu\|\sqrt{-\Delta_L}A^{\sigma} \Omega\|^2_2d\tau \\
\lesssim & \|A^{\sigma}\Omega(1)\|^2_2  + C\epsilon \sup_{\tau\in [1,t]}\|A^{\sigma}\Omega(\tau)\|^2_2 +C\epsilon\int_1^tCK_{w}(\tau)d\tau   +C\epsilon^{3}\nu^{2/3}.
\end{split}
\end{equation*}
}

First, for the last \emph{Dissipation Error} term $DE(t)$  in \eqref{eq2.1}, we have
\begin{equation}\label{e2.45}
\begin{split}
\nu\int A^{\sigma}\Omega & A^{\sigma}(\tilde\Delta_t\Omega)dXdY \\
= & \nu\int A^{\sigma}\Omega A^{\sigma}(\Delta_L\Omega)dXdY -\nu\int A^{\sigma}\Omega A^{\sigma}[(1-(\bar U'+\partial_y\psi)^2)\partial_{YY}^L\Omega]dXdY\\
= & -\nu\|\sqrt{-\Delta_L}A^{\sigma}\Omega\|_2^2 -\nu\int A^{\sigma}\Omega_0 A^{\sigma}[(1-(\bar U'+\partial_y\psi)^2)\partial_{YY}\Omega_0]dXdY\\
& -\nu\int A^{\sigma}\Omega_{\neq} A^{\sigma}[(1-(\bar U'+\partial_y\psi)^2)(\partial_{Y}-t\partial_X)^2\Omega_{\neq}]dXdY\\
= & -\nu\|\sqrt{-\Delta_L}A^{\sigma}\Omega\|^2_2 +E^0 +E^{\neq}.
\end{split}
\end{equation}
We can show that
\begin{equation}\label{prop2.5}
\begin{split}
\nu\int_1^t\left| \int A^{\sigma}\Omega A^{\sigma}(\tilde\Delta_t\Omega)dXdY\right| d\tau \leq -\frac78\int_1^t\nu\|\sqrt{-\Delta_L}A^{\sigma} \Omega\|^2_2d\tau +C\max\{\nu^{2/3},\epsilon\nu{1/2},\epsilon\}\cdot \epsilon^2\nu^{2/3}.
\end{split}
\end{equation}

For the \emph{convective} term $CV(t)$ in \eqref{eq2.1}, we have
\begin{equation}\label{e2.44}
\begin{split}
CV(t)=  -\frac12\int \nabla\cdot u|A^{\sigma}\Omega|^2dXdY   +\underbrace{ \int A^{\sigma}\Omega \Big(A^{\sigma}(u\cdot\nabla_{X,Y}\Omega -u\cdot\nabla_{X,Y}A^{\sigma}\Omega)\Big)dXdY}_{CM(t)},
\end{split}
\end{equation}
where $\nabla_{X,Y}\cdot u=\partial_Yg +\partial_X\phi_{\neq}\partial_Y(\bar U'+h)$ by definition \eqref{e2.18}$_3$.  For the first part, we have
\begin{equation*}
\begin{split}
\left|  \int \nabla\cdot u|A^{\sigma}\Omega|^2dXdY\right|   \lesssim & \|\nabla u\|_{L^{\infty}}\|A^{\sigma}\Omega\|^2_{2}\\
\lesssim & \left(\|g\|_{H^2}+ (1+\|h\|_{H^2}+\|\bar U'\|_{H^2})\|\phi_{\neq}\|_{H^3}\right) \|A^{\sigma}\Omega\|^2_{2}\\
\lesssim & \frac{\epsilon\nu^{1/3}}{\la t\ra^2}\|A^{\sigma}\Omega\|^2_{2},
\end{split}
\end{equation*}
where we have used the bootstrap hypotheses \eqref{e2.25} and decay estimate $\|\phi_{\neq}\|_{H^{\sigma-4}}\lesssim {\epsilon\nu^{1/3}}{\la t\ra^{-2}}$ in Lemma \ref{lem3.1} below.  For the commutator term in \eqref{e2.44}, we use a paraproduct decomposition
\begin{equation}\label{e2.44}
\begin{split}
CM(t)=  \int A^{\sigma}\Omega \left[A^{\sigma}(u\cdot\nabla_{X,Y}\Omega -u\cdot\nabla_{X,Y}A^{\sigma}\Omega)\right] dXdY=\sum_{N\geq8}T_N+\sum_{N\geq8}R_N+\mathcal R,
\end{split}
\end{equation}
where
\begin{equation}\label{e2.44-1}
\begin{split}
T_N= &  \int A^{\sigma}\Omega \left[A^{\sigma}(u_{<N/8}\cdot\nabla_{X,Y}\Omega_N -u_{<N/8}\cdot\nabla_{X,Y}A^{\sigma}\Omega_N)\right] dXdY,      \\
R_N= &  \int A^{\sigma}\Omega \left[A^{\sigma}(u_N\cdot\nabla_{X,Y}\Omega_{<N/8}) -u_N\cdot\nabla_{X,Y}A^{\sigma}\Omega_{<N/8}\right] dXdY,  \\
\mathcal R= & \sum_{N\in\Bbb D;N/8\leq N'\leq 8N}\int A^{\sigma}\Omega \left[A^{\sigma}(u_N\cdot\nabla_{X,Y}\Omega_{N'} -u_N\cdot\nabla_{X,Y}A^{\sigma}\Omega_{N'})\right] dXdY.
\end{split}
\end{equation}
Here, $\displaystyle N\in \Bbb D=\{\frac12,1,2,4,\cdots,2^j,\cdots\}$.  For the commutator term  $CM(t)$, we can show that
\begin{equation}\label{prop2.678}
\begin{split}
\int_1^tCM(\tau)d\tau \lesssim \epsilon \sup_{\tau\in [1,t]}\|A^{\sigma}\Omega(\tau)\|^2_2 +\epsilon\int_1^tCK_{w}(\tau)d\tau +\epsilon^5\nu^{\frac23}.
\end{split}
\end{equation}

For the \emph{source} term $S(t)$ in \eqref{eq2.1}, we use again the Bony decomposition
\begin{equation}
\begin{split}
\bar U''\partial_X\phi=T_{\bar U''}(\partial_X\phi) +T_{\partial_X\phi}{(\bar U'')}+\mathcal R(\bar U'',\partial_X\phi)
\end{split}
\end{equation}
to decompose
\begin{equation}
\begin{split}
S(t):=\int\int & A^{\sigma}\Omega A^{\sigma}(\bar U''\partial_X\phi)dXdY =S^{LH}+S^{HL}+S^{R},
\end{split}
\end{equation}
where   
\begin{equation}
\begin{split}
S^{LH}= & 2\pi\sum_{N\geq8}\int  A^{\sigma}\Omega A^{\sigma}({\bar U''}_{<N/8}\partial_X\Delta_t^{-1}\Omega_{N})dXdY,\\
S^{HL}= & 2\pi\sum_{N\geq8}\int  A^{\sigma}\Omega A^{\sigma}({\bar U''}_N\partial_X\Delta_t^{-1}\Omega_{<N/8})dXdY,\\
S^R= & 2\pi\sum_{N\in\Bbb D}\sum_{\frac{N}{8}\leq N'\leq 8N}\int  A^{\sigma}\Omega A^{\sigma}({\bar U''}_N\partial_X\Delta_t^{-1}\Omega_{N'})dXdY.
\end{split}
\end{equation}
It can be shown that for any $t\geq1$, it holds
$$\int_1^tS(\tau)d\tau \leq C\epsilon\sup_{\tau\in[1,t]} \|A^{\sigma}\Omega(\tau,\cdot)\|^2_{2}+\epsilon^4\nu^{7/12}(8\epsilon\nu^{1/3})^2.$$
under the bootstrap assumption \eqref{BS2.2} and the smallness assumption of the background Couette flow that $ \|U(y)-y\|_{H^{\sigma}}\lesssim \nu^{1/3}$.


Putting these estimates together, we finally arrive at
\begin{equation}
\begin{split}
\|A^{\sigma}\Omega(t)\|^2_2 & +2\int_1^tCK_{w}(\tau)d\tau +\frac74\int_1^t\nu\|\sqrt{-\Delta_L}A^{\sigma} \Omega\|^2_2d\tau \\
\lesssim & \|A^{\sigma}\Omega(1)\|^2_2  + C\epsilon \sup_{\tau\in [1,t]}\|A^{\sigma}\Omega(\tau)\|^2_2 +C\epsilon\int_1^tCK_{w}(\tau)d\tau   +C\epsilon^{3}\nu^{2/3}.
\end{split}
\end{equation}
By taking $\epsilon$ sufficiently small, we complete the proof.

Other bootstrap hypotheses are closed in Sections \ref{Coordinate} and \ref{DecayVorticity}.
\end{proof}

\section{Elliptic estimate}
\setcounter{equation}{0}
This section provides some analyses of $\Delta_t$. Recall that
$$\bar U(t,y)=e^{\nu t\partial_{yy}}U(y)=y+e^{\nu t\partial_{yy}}(U(y)-y)=y+\widetilde U(t,y).$$
Below, we require that the shear flow $U(y)$ is close to the Couette flow  in the sense $\|U(y)-y\|_{H^{\sigma}}\leq \epsilon\nu^{1/3}$.
\begin{lemma}\label{lem3.1}
Under the bootstrap hypotheses, for $\nu$ sufficiently small and $s'\in [0,2]$, it holds that for $2\leq \gamma\leq \sigma-1$
$$\|\phi_{\ne}\|_{H^{\gamma-s'}}\lesssim \frac{1}{\langle t\rangle^{s'}}\|\langle\partial_X\rangle^{-s'}\Omega_{\ne}\|_{H^{\gamma}},$$
and for $\gamma\leq \sigma-1$
$$\|\Delta_L\Delta_t^{-1}\Omega_{\ne}\|_{H^{\gamma}}=\|\Delta_L\phi_{\ne}\|_{H^{\gamma}}\lesssim \|\Omega_{\ne}\|_{H^{\gamma}}.$$
\end{lemma}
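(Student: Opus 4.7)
The plan is to first establish the coercivity estimate $\|\Delta_L \phi_{\neq}\|_{H^\gamma} \lesssim \|\Omega_{\neq}\|_{H^\gamma}$ by treating $\Delta_t$ as a small perturbation of $\Delta_L$, and then deduce the inviscid-damping bound from a pure Fourier multiplier argument on nonzero $X$-modes. Write $\Delta_t = \Delta_L + T$, where
$$T := \bigl[(\bar U' + \partial_y \psi)^2 - 1\bigr]\,\partial_{YY}^L + q(t,Y)\,\partial_Y^L,$$
with $q$ collecting the first-order coefficients coming from $\bar U''$, $\partial_{yy}\psi$, and $(\bar U' + \partial_y \psi)\partial_Y(\bar U' + \partial_y \psi)$. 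The key structural fact is that every coefficient of $T$ depends only on $(t, Y)$, and is small in Sobolev norm: the heat estimate together with $\|U(y) - y\|_{H^\sigma} \leq \epsilon \nu^{1/3}$ gives $\|\bar U' - 1\|_{H^{\sigma-1}} \lesssim \epsilon \nu^{1/3}$, while the bootstrap bounds \eqref{BS2.2} give $\|h\|_{H^\sigma} \lesssim \epsilon \nu^{1/6}$ and $\|\bar h\|_{H^{\sigma-1}} \lesssim \epsilon \nu^{1/3}$.

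For the second estimate, from $\Delta_L \phi_{\neq} = \Omega_{\neq} - T \phi_{\neq}$ and the product rule in $H^\gamma$ (available since $\sigma \geq 40$ and $\gamma \leq \sigma - 1$) one obtains
$$\|T \phi_{\neq}\|_{H^\gamma} \leq C\, \bigl(\|(\bar U'+\partial_y\psi)^2 - 1\|_{H^\sigma} + \|q\|_{H^\sigma}\bigr)\,\|\Delta_L \phi_{\neq}\|_{H^\gamma},$$
where $\partial_{YY}^L$ and $\partial_Y^L$ are controlled by $\Delta_L$ on nonzero modes via the pointwise Fourier inequalities $|\eta - kt|^2 \leq k^2 + (\eta - kt)^2$ and $|\eta - kt| \leq k^2 + (\eta - kt)^2$ (the latter valid because $k^2 + (\eta - kt)^2 \geq 1$ for $k \neq 0$). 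The prefactor is $O(\epsilon \nu^{1/6})$, so for $\nu$ small one absorbs $T\phi_{\neq}$ into the left-hand side of $\|\Delta_L \phi_{\neq}\|_{H^\gamma} \leq \|\Omega_{\neq}\|_{H^\gamma} + \|T\phi_{\neq}\|_{H^\gamma}$, which yields the second claim.

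For the first estimate, re-run the previous argument after precomposing every term with $\langle \partial_X \rangle^{-s'}$. Because all coefficients of $T$ are $X$-independent, $\langle \partial_X \rangle^{-s'}$ commutes with both $T$ and $\Delta_L$, and the same absorption gives $\|\langle \partial_X \rangle^{-s'} \Delta_L \phi_{\neq}\|_{H^\gamma} \lesssim \|\langle \partial_X \rangle^{-s'} \Omega_{\neq}\|_{H^\gamma}$. It then remains to prove the pointwise Fourier multiplier inequality
$$\langle t \rangle^{s'} \langle k \rangle^{s'} \lesssim \langle k, \eta \rangle^{s'}\bigl(k^2 + (\eta - kt)^2\bigr), \quad k \neq 0,\ s' \in [0, 2],$$
which, after division by $k^2 + (\eta - kt)^2$ and application to $|\widehat{\phi_k}(\eta)|$, converts the coercivity bound into the claimed $\langle t \rangle^{-s'}$-decay estimate. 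I verify the multiplier bound by case analysis: when $|\eta| \leq |kt|/2$, one has $(\eta - kt)^2 \geq k^2 t^2 /4$, so the right-hand side dominates $k^2 t^2/4$, which controls $\langle t \rangle^{s'} \langle k \rangle^{s'}$ since $s' \leq 2$ and $|k| \geq 1$; when $|\eta| > |kt|/2$, one has $\langle k, \eta \rangle \gtrsim |k| t$, so $\langle k, \eta \rangle^{s'}$ alone absorbs $\langle t \rangle^{s'} \langle k \rangle^{s'}$, and the factor $k^2 + (\eta - kt)^2 \geq 1$ suffices.

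The main obstacle is purely one of bookkeeping: one must verify that every coefficient appearing in $T$ --- in particular the cross term $(\bar U' + \partial_y \psi)\partial_Y(\bar U' + \partial_y \psi)$ --- has $H^\sigma$ (or $H^{\sigma - 1}$) norm of size $o(1)$ uniformly in $\nu$, so that the absorption step is justified. This combines the hypothesis on $\|U(y) - y\|_{H^\sigma}$, the heat-equation decay for $\bar U$, and the bootstrap bounds \eqref{BS2.2} on $h$ and $\bar h$; once this is done, both estimates follow cleanly from the perturbative identity and the scalar Fourier inequality above.
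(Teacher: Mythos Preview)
Your proof is correct and follows essentially the same approach as the paper: treat $\Delta_t$ as a small perturbation of $\Delta_L$ via $\Delta_L\phi_{\neq}=\Omega_{\neq}-T\phi_{\neq}$, absorb the $O(\epsilon\nu^{1/6})$ error to get the coercivity $\|\Delta_L\phi_{\neq}\|_{H^\gamma}\lesssim\|\Omega_{\neq}\|_{H^\gamma}$, and then deduce the damping estimate from a scalar Fourier multiplier inequality on nonzero modes (the paper writes it as $\langle t\rangle^{2s'}\lesssim\langle\eta/k\rangle^{2s'}(k^2+|\eta-kt|^2)^2$, which is equivalent to your version). Your write-up is more explicit about the commutation of $\langle\partial_X\rangle^{-s'}$ and the case analysis for the multiplier, and note that your listed contributions to $q$ are redundant since $(\bar U'+\partial_y\psi)\partial_Y(\bar U'+\partial_y\psi)=\bar U''+\partial_{yy}\psi$, but these are cosmetic differences.
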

\begin{proof}
The first one is obvious by noting that $\la t\ra^{2s'}\lesssim \la \eta/k\ra^{2s'}(k^2+|\eta-kt|^2)^2$. For the second one, note that we can write $\Delta_t$ as a perturbation of $\Delta_L$ via
$$\Delta_L\phi_{\neq}=\Delta_t\phi_{\neq} +(1-(\bar U'+\partial_y\psi)^2)\partial_{YY}^L\phi_{\neq} -(\bar U''+\partial_{yy}\psi)\partial_Y^L\phi_{\neq}.$$
Under the bootstrap assumption \eqref{BS2.2} and the smallness assumption of $\|U-y\|_{H^{\sigma}}\lesssim \epsilon\nu^{1/3}$, we have
\begin{equation}
\begin{split}
\|1-(\bar U'+\partial_y\psi)^2\|_{H^{\gamma}}\leq \|1+(\bar U'+\partial_y\psi)\|_{H^{\gamma}}\|(\bar U'-1)+\partial_y\psi\|_{H^{\gamma}} \lesssim \epsilon\nu^{1/6}.
\end{split}
\end{equation}
The other steps follow from the proof of Lemma 4.1 in \cite{MZ19}, since one has 
\begin{equation}
\begin{split}
\|\Delta_L\phi_{\neq}\|_{H^{\gamma}} \leq \|\Omega_{\neq}\|_{H^{\gamma}} + C\epsilon\nu^{1/6}\|\Delta_L\phi_{\neq}\|_{H^{\gamma}}.
\end{split}
\end{equation}
The result then follows by setting $\epsilon$ sufficiently small and letting $s=\sigma-2-s'$.
\end{proof}

\begin{lemma}\label{lem4.2}
Under the bootstrap hypotheses, it holds that
$$\|\nabla_L\phi_{\ne}\|_{H^{\sigma-2}}+ \|V^X_{\neq}\|_{H^{\sigma-2}} \lesssim \frac{1}{\la t\ra}\|\Omega_{\neq}\|_{H^{\sigma-1}},$$
and for $\gamma\leq \sigma-1$
$$\|\nabla_LV^X_{\neq}\|_{H^{\gamma}}\lesssim \|\Omega_{\neq}\|_{H^{\gamma}},$$
where $V^X$ is defined in \eqref{u1} via the Biot-Savart law.
\end{lemma}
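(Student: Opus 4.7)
The plan is to use Fourier analysis combined with the elliptic estimate from Lemma \ref{lem3.1}, and then propagate the bound to $V^X_{\ne}$ by exploiting the near-Couette smallness of the coefficient $\bar U'+\partial_y\psi$. The first inequality reduces to a pointwise Fourier comparison. For $k\ne 0$, the identity $kt=\eta-(\eta-kt)$ gives $|k|t\le |\eta|+|\eta-kt|$, hence
$$\la t\ra^{2}\lesssim \la \eta/k\ra^{2}\bigl(1+k^{2}+(\eta-kt)^{2}\bigr)\lesssim \la k,\eta\ra^{2}\bigl(k^{2}+(\eta-kt)^{2}\bigr).$$
Since $|\widehat{\nabla_L\phi_{\ne}}|^{2}=(k^{2}+(\eta-kt)^{2})|\hat\phi|^{2}$ and $|\widehat{\Delta_L\phi_{\ne}}|^{2}=(k^{2}+(\eta-kt)^{2})^{2}|\hat\phi|^{2}$, multiplying by $\la k,\eta\ra^{2(\sigma-2)}$ and integrating yields $\la t\ra\|\nabla_L\phi_{\ne}\|_{H^{\sigma-2}}\lesssim \|\Delta_L\phi_{\ne}\|_{H^{\sigma-1}}$, and then Lemma \ref{lem3.1} (second estimate with $\gamma=\sigma-1$) finishes this piece.

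Next I would transfer the bound to $V^X_{\ne}=-(\bar U'+\partial_y\psi)\partial_Y^L\phi_{\ne}$. Writing $\bar U'+\partial_y\psi=1+(\bar U'-1)+h$, the first summand already yields $\|\partial_Y^L\phi_{\ne}\|_{H^{\sigma-2}}\le \|\nabla_L\phi_{\ne}\|_{H^{\sigma-2}}$. For the remaining pieces I would apply the usual Moser product estimate $\|fg\|_{H^{\sigma-2}}\lesssim \|f\|_{H^{\sigma-2}}\|g\|_{H^{2}}+\|f\|_{H^{2}}\|g\|_{H^{\sigma-2}}$ together with $\|\bar U'-1\|_{H^{\sigma-1}}\le \|U-y\|_{H^{\sigma}}\le \epsilon\nu^{1/3}$ (heat semigroup preserves Sobolev norms) and the bootstrap $\|h\|_{H^{\sigma-1}}\lesssim \epsilon\nu^{1/3}$ from the assistant estimates. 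The error contribution is then absorbed by the leading $\|\partial_Y^L\phi_{\ne}\|_{H^{\sigma-2}}$ term up to a factor $1+C\epsilon\nu^{1/6}$.

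For the second inequality, apply the product rule to $\nabla_L V^X_{\ne}$. Since $\bar U'$ and $h$ depend only on $Y$, one finds
$$\nabla_L V^X_{\ne}=-(\bar U'+h)\,\nabla_L\partial_Y^L\phi_{\ne}-\bigl(0,\,(\bar U''+\partial_Y h)\partial_Y^L\phi_{\ne}\bigr).$$
The first term is controlled by $\|\nabla_L\partial_Y^L\phi_{\ne}\|_{H^{\gamma}}\le \|\Delta_L\phi_{\ne}\|_{H^{\gamma}}\lesssim \|\Omega_{\ne}\|_{H^{\gamma}}$ (Lemma \ref{lem3.1}), multiplied by $1+C\epsilon\nu^{1/6}$ via the same Moser argument. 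The second term is estimated by $\|\bar U''+\partial_Y h\|_{L^\infty}\|\partial_Y^L\phi_{\ne}\|_{H^{\gamma}}+\|\bar U''+\partial_Y h\|_{H^{\gamma}}\|\partial_Y^L\phi_{\ne}\|_{L^\infty}$. The hardest point, and the only real obstacle, is handling the top regularity $\gamma=\sigma-1$: bounding $\bar U''$ in $H^{\sigma-1}$ directly would require $U\in H^{\sigma+1}$, which is not assumed. I would resolve this by distributing derivatives through Moser so that $\bar U''$ appears only in $H^{\sigma-2}$ (controlled by $\|U-y\|_{H^{\sigma}}\le \epsilon\nu^{1/3}$) paired with $\|\partial_Y^L\phi_{\ne}\|_{H^{2}}\lesssim \|\Omega_{\ne}\|_{H^{\sigma-1}}$, while the symmetric pairing uses $\|\bar U''\|_{L^\infty}\lesssim \|U-y\|_{H^{\sigma}}$. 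Combining everything closes the estimate with the desired implicit constant.
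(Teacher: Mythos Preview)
Your approach is essentially the paper's: the Fourier-side inequality $\la t\ra\lesssim \la k,\eta\ra(k^2+(\eta-kt)^2)^{1/2}$ followed by Lemma~\ref{lem3.1} for the first estimate, and product rule plus Moser/algebra estimates for the $V^X_{\ne}$ bounds.  Two small points deserve comment.

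First, in your expansion of $\partial_Y^L V^X_{\ne}$ the term you write as $\bar U''$ is not literally $\partial_Y\bar U'$; by the change of variables (see \eqref{3.4}) one has $\partial_Y\bar U'=(\bar U'+\psi_y)^{-1}\bar U''$.  The paper records this explicitly and then absorbs the near-unit factor via a $\|(1+\varepsilon)^{-1}\|\lesssim 1+\|\varepsilon\|$ argument.  This is cosmetic, but you should include it.

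Second, your proposed resolution of the $\gamma=\sigma-1$ edge case does not work as written.  The ``redistributed'' Moser pairing
\[
\|\bar U''\,\partial_Y^L\phi_{\ne}\|_{H^{\sigma-1}}\;\lesssim\;\|\bar U''\|_{H^{\sigma-2}}\|\partial_Y^L\phi_{\ne}\|_{H^{2}}+\|\bar U''\|_{L^\infty}\|\partial_Y^L\phi_{\ne}\|_{H^{\sigma-1}}
\]
is not a valid product inequality: when $\bar U''$ carries the high frequency and $\partial_Y^L\phi_{\ne}$ the low, the output sits at high frequency and one genuinely needs $\|\bar U''\|_{H^{\sigma-1}}$ on the right (the Kato--Ponce/tame estimate gives $\|f\|_{H^{\sigma-1}}\|g\|_{L^\infty}+\|f\|_{L^\infty}\|g\|_{H^{\sigma-1}}$, with no room to drop an index).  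The paper's own write-up is terse here: it records the bound $(1+\|\bar U'\|_{H^{\sigma-1}}+\|h\|_{H^{\sigma-1}})\|\Delta_L\phi_{\ne}\|_{H^\gamma}$, which closes cleanly for $\gamma\le\sigma-2$ via the algebra property of $H^{\sigma-2}$ but leaves the top case implicit.  In practice the lemma is only invoked at $\gamma\le\sigma-2$ elsewhere in the paper, so the issue is harmless for the argument; but you should either restrict to $\gamma\le\sigma-2$ or find an honest replacement for the claimed inequality.
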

\begin{proof}
By definition, we have
$$V^X_{\neq} =-(\bar U'+\psi_y)(\partial_Y-t\partial_X)\phi_{\neq},$$
and hence
\begin{equation}
\begin{split}
\|V^X_{\neq}\|_{H^{\sigma-2}} \lesssim & (1+\|e^{\nu t\partial_{yy}}(U(y)-y)\|_{H^{\sigma-2}}+\|\psi_y\|_{H^{\sigma-2}})\|(\partial_Y-t\partial_X)\phi_{\neq}\|_{H^{\sigma-2}}\\
\lesssim & \frac{(1+\|e^{\nu t\partial_{yy}}(U(y)-y)\|_{H^{\sigma-2}}+\|\psi_y\|_{H^{\sigma-2}})}{\langle t\rangle} \|\Delta_L\phi_{\neq}\|_{H^{\sigma-1}}\\
\lesssim & \frac{1}{\langle t\rangle}\|\Omega_{\neq}\|_{H^{\sigma-1}},
\end{split}
\end{equation}
where we have set $\widetilde U(t,y)=e^{\nu t\partial_{yy}}(U(y)-y)$ and $U(y)-y$ is suitably small.
Similarly, we have
$$\partial_XV^X_{\neq}=-(\bar U'+\psi_y)(\partial_Y-t\partial_X)\partial_X\phi_{\neq},$$
and
$$(\partial_Y-t\partial_X)V^X_{\neq} =-(\bar U'+\psi_y)(\partial_Y-t\partial_X)^2\phi_{\neq} -\partial_Y(\bar U'+\psi_y)(\partial_Y-t\partial_X)\phi_{\neq}.$$
Note that by change of variable, we have
\begin{equation}\label{3.4}
\begin{split}
\partial_Y=t\partial_x+(\bar U'+\psi_y)^{-1}\partial_y,\ \ \ \partial_Y\bar U'=(\bar U'+\psi_y)^{-1}\bar U'' =(1+\widetilde U'+\psi_y)^{-1}\bar U''.
\end{split}
\end{equation}
Therefore, by bootstrap hypotheses, we have
\begin{equation}
\begin{split}
\|\nabla_LV^X_{\neq}\|_{H^{\gamma}}\lesssim (1+\|\bar U'\|_{H^{\sigma-1}}+\|h\|_{H^{\sigma-1}})\|\Delta_L\phi_{\neq}\|_{H^{\gamma}},
\end{split}
\end{equation}
where we have used the $\|(1+\varepsilon)^{-1}\|_{*}\lesssim 1+ \|\varepsilon\|_*$ type estimate.
\end{proof}

Similar to that in \cite{MZ19}, we have the following Lemma.
\begin{lemma}\label{lem3.3}
Under the bootstrap hypotheses, it holds that
$$\|\nabla_LV^X_{\neq}\|_{H^{\sigma}}\lesssim \|\Delta_L\Delta_t^{-1}\Omega_{\neq}\|_{H^{\sigma}} \lesssim \|\Omega_{\neq}\|_{H^{\sigma}} +\frac{\epsilon\nu^{1/3}}{\la t\ra\la \nu t^3\ra}\|\partial_Yh\|_{H^{\sigma}},$$
and
$$\|\sqrt{\frac{\partial_tw}{w}}\chi_R\Delta_L\Delta_t^{-1}\Omega_{\neq}\|_{H^{\sigma}} \lesssim \|\sqrt{\frac{\partial_tw}{w}}\Omega_{\neq}\|_{H^{\sigma}} +\frac{\epsilon^2\nu^{1/2}}{\la \nu t^3\ra}.$$
\end{lemma}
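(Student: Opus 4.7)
The plan is to establish the three claims in the order they appear.

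For the first inequality $\|\nabla_L V^X_{\ne}\|_{H^{\sigma}}\lesssim \|\Delta_L\phi_{\ne}\|_{H^{\sigma}}$, I would start from the Biot--Savart identity $V^X_{\ne}=-(\bar U'+h)\partial_Y^L\phi_{\ne}$ from \eqref{u1}, distribute $\nabla_L$ by the product rule, and treat $\bar U'+h=1+O(\epsilon\nu^{1/6})$ as a bounded multiplier in $H^{\sigma}$ using the bootstrap hypotheses on $h$ together with the near-Couette assumption on $U$. The estimate then reduces to the Fourier pointwise inequality $|(k,\eta-kt)|\cdot|\eta-kt|\le k^2+(\eta-kt)^2$, which immediately yields $\|\nabla_L\partial_Y^L\phi_{\ne}\|_{H^{\sigma}}\le\|\Delta_L\phi_{\ne}\|_{H^{\sigma}}$.

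For the second, main inequality, the plan is to exploit the identity already derived in the proof of Lemma \ref{lem3.1},
\[
\Delta_L\phi_{\ne}=\Omega_{\ne}+(1-(\bar U'+h)^2)\partial_{YY}^L\phi_{\ne}-(\bar U''+\partial_{yy}\psi)\partial_Y^L\phi_{\ne},
\]
and to apply a Bony paraproduct decomposition to each error product. The low--high pieces come with small $L^{\infty}$ coefficient bounds, namely $\|1-(\bar U'+h)^2\|_{L^{\infty}}+\|\bar U''+\partial_{yy}\psi\|_{L^{\infty}}\lesssim \epsilon\nu^{1/6}$ from \eqref{BS2.2} and the near-Couette smallness of $U-y$; since $\|\partial_{YY}^L\phi_{\ne}\|_{H^{\sigma}}+\|\partial_Y^L\phi_{\ne}\|_{H^{\sigma}}\le C\|\Delta_L\phi_{\ne}\|_{H^{\sigma}}$, these contributions absorb into the left-hand side. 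For the high--low pieces I would use the decaying $L^{\infty}$ bounds obtained by combining Lemma \ref{lem3.1} and Lemma \ref{lem4.2} with the enhanced-dissipation bootstrap at the lower regularity $H^{8}$: specifically, $\|\partial_{YY}^L\phi_{\ne}\|_{L^{\infty}}\lesssim \epsilon\nu^{1/3}\la\nu t^3\ra^{-1}$ and $\|\partial_Y^L\phi_{\ne}\|_{L^{\infty}}\lesssim \epsilon\nu^{1/3}\la t\ra^{-1}\la\nu t^3\ra^{-1}$. Paired with $\|1-(\bar U'+h)^2\|_{H^{\sigma}}\lesssim \epsilon\nu^{1/6}$ and $\|\bar U''+\partial_{yy}\psi\|_{H^{\sigma}}\lesssim \|\partial_Y h\|_{H^{\sigma}}$ (using $\partial_{yy}\psi=(\bar U'+h)\partial_Y h$ and absorbing the $\bar U''$ contribution into the near-Couette and $L^1$ smallness of $U-y$ together with heat-kernel smoothing), this yields precisely the stated error $\frac{\epsilon\nu^{1/3}}{\la t\ra\la\nu t^3\ra}\|\partial_Y h\|_{H^{\sigma}}$.

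For the third, CK-weighted inequality, I would repeat the paraproduct argument after conjugating by $\sqrt{\partial_t w/w}\,\chi_R$. The multiplier $\sqrt{\partial_t w/w}$ enjoys the frequency-localization compatibility established in the construction of $w$ sketched in the Appendix, so the low--high pieces still absorb and produce the $\|\sqrt{\partial_t w/w}\,\Omega_{\ne}\|_{H^{\sigma}}$ term on the right, while the high--low pieces are controlled by the same $L^{\infty}$ decay bounds together with $\|h\|_{H^{\sigma}}\lesssim \epsilon\nu^{1/6}$ from \eqref{BS2.2}, producing the error $\epsilon^{2}\nu^{1/2}\la\nu t^3\ra^{-1}$.

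The principal obstacle I expect is to secure the $L^{\infty}$ decay of $\partial_Y^L\phi_{\ne}$ and $\partial_{YY}^L\phi_{\ne}$ with exactly the correct exponents $\la t\ra^{-1}\la \nu t^3\ra^{-1}$ and $\la\nu t^3\ra^{-1}$; this forces working at a reduced regularity where the enhanced-dissipation bootstrap applies, and then carefully reassembling with the $H^{\sigma}$-level coefficient bounds. A secondary but nontrivial point is ensuring that the contribution of $\bar U''$ is genuinely subordinate to that of $\partial_Y h$, which relies on combining the near-Couette hypothesis $\|U-y\|_{H^{\sigma}}\le \epsilon\nu^{1/3}$ with the $L^{1}$ smallness $\|U-y\|_{L^{1}}\le\epsilon\nu^{5/4}$ and the smoothing of $e^{\nu t\partial_{yy}}$.
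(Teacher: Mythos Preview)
Your proposal is correct and follows essentially the same route that the paper indicates (and defers to \cite{MZ19}): start from the elliptic identity $\Delta_L\phi_{\neq}=\Omega_{\neq}+(1-(\bar U'+h)^2)\partial_{YY}^L\phi_{\neq}-(\bar U''+\partial_{yy}\psi)\partial_Y^L\phi_{\neq}$, apply a paraproduct decomposition, absorb the low--high pieces using the smallness of the coefficients, and control the high--low pieces via the enhanced-dissipation decay of $\phi_{\neq}$ at lower regularity. The only minor refinement is that the paper points specifically to the pair of bounds $\|1-(\vartheta')^2\|_{H^3}\le \epsilon\nu^{1/3}$ and $\|1-(\vartheta')^2\|_{H^{\sigma}}\le \epsilon\nu^{1/6}$ (the former, coming from the assistant bootstrap on $h$ in $H^{\sigma-1}$, is sharper than the $\epsilon\nu^{1/6}$ you quote for the absorption step), but this does not affect the argument.
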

This Lemma can be proved by noting that $\|1-(\vartheta'(t,y))^2\|_{H^3}\leq \epsilon\nu^{1/3}$ and $\|1-(\vartheta'(t,y))^2\|_{H^{\sigma}}\leq \epsilon\nu^{1/6}$ by bootstrap hypotheses.  As a corollary, we have from the bootstrap hypotheses that $u$ in \eqref{e2.18} can be bounded by
$$\|u\|_{H^s} \lesssim \|g\|_{H^s}  + (\|\bar U'\|_{H^s} +\|h\|_{H^s}) \|\phi_{\neq}\|_{H^{s+1}}   \lesssim \frac{\epsilon\nu^{1/3}}{\la t\ra^2}.$$

\begin{lemma}[\cite{MZ19}]\label{lem4.4}
Under the bootstrap hypotheses, it holds  that for $\epsilon$ sufficiently small and for $s\leq \sigma-7$ that
$$\|A^s_E(\phi_{\neq})\|_{2}\lesssim \frac{1}{\la t\ra^2}(\|A^s_E\Omega\|_{2}+ \|\Omega\|_{H^{\sigma}}).$$
\end{lemma}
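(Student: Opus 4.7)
The plan is to invert $\Delta_t$ explicitly on the Fourier side and extract the inviscid-damping gain of $\la t\ra^{-2}$ from $\Delta_L^{-1}$. Starting from $\Delta_t\phi_{\neq} = \Omega_{\neq}$ together with the elliptic splitting from the proof of Lemma \ref{lem3.1},
$$\Delta_L\phi_{\neq} \;=\; \Omega_{\neq} \;-\; \mathcal R, \qquad \mathcal R := -\bigl(1 - (\bar U' + \partial_y\psi)^2\bigr)\partial_{YY}^L\phi_{\neq} + (\bar U'' + \partial_{yy}\psi)\partial_Y^L\phi_{\neq},$$
inversion of $\Delta_L$ on nonzero $X$-modes gives, for $k\neq 0$,
$$A^s_E(t,k,\eta)\hat\phi_k(\eta) \;=\; -\frac{\la k,\eta\ra^s D(t,\eta)}{k^2 + (\eta - kt)^2}\bigl(\hat\Omega_k(\eta) - \hat{\mathcal R}_k(\eta)\bigr).$$
The entire lemma then reduces to a pointwise multiplier estimate on this symbol, together with control of $\mathcal R$.

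The central estimate I would prove is: for $k\neq 0$ and $t\geq 1$,
$$\frac{\la t\ra^2\la k,\eta\ra^s D(t,\eta)}{k^2 + (\eta - kt)^2} \;\lesssim\; \la k,\eta\ra^s D(t,\eta) \;+\; \la k,\eta\ra^\sigma,$$
by splitting $(k,\eta)$ according to distance from the critical line $\eta = kt$. On the non-resonant region $\mathcal A = \{k^2+(\eta-kt)^2 \geq c\la t\ra^2\}$ the fraction $\la t\ra^2/(k^2+(\eta-kt)^2)$ is $O(1)$, so the left-hand side is immediately bounded by $A^s_E$. On the resonant complement $\mathcal B$ (taking $c<1/4$), the bounds $|k|\geq 1$ and $|\eta - kt| < \sqrt{c}\la t\ra$ force $|\eta| \geq (1-2\sqrt c)t \gtrsim t$ for $t\geq 1$, hence $\la k,\eta\ra \gtrsim t$ and $D(t,\eta) \lesssim \nu|\eta|^3$; combining these with $k^2+(\eta-kt)^2\geq 1$ gives
$$\frac{\la t\ra^2\la k,\eta\ra^s D}{k^2 + (\eta - kt)^2} \;\lesssim\; \nu\la t\ra^2\la k,\eta\ra^{s+3} \;\lesssim\; \nu\, t^{\,s+5-\sigma}\la k,\eta\ra^\sigma \;\leq\; \la k,\eta\ra^\sigma,$$
where the last step uses $\sigma\geq s+7$, $t\geq 1$ and $\nu\leq 1$. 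Multiplying by $|\hat\Omega_k - \hat{\mathcal R}_k|$ and passing to the $\ell^2_k L^2_\eta$ norm yields
$$\la t\ra^2\|A^s_E\phi_{\neq}\|_2 \;\lesssim\; \|A^s_E\Omega\|_2 + \|\Omega\|_{H^\sigma} + \|A^s_E\mathcal R\|_2 + \|\mathcal R\|_{H^\sigma}.$$

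For the remainder $\mathcal R$ I would invoke the bootstrap hypotheses \eqref{BS2.2} together with $\|U-y\|_{H^\sigma}\leq \epsilon\nu^{1/3}$, which, exactly as in the proof of Lemma \ref{lem3.1}, produce $\|1 - (\bar U' + \partial_y\psi)^2\|_{H^\sigma} + \|\bar U'' + \partial_{yy}\psi\|_{H^{\sigma-1}} \lesssim \epsilon\nu^{1/6}$. Tame product estimates in $H^\sigma$ (and in the $A^s_E$-weighted $L^2$), combined with the Fourier-side bound $\|\partial_Y^L\phi_{\neq}\|, \|\partial_{YY}^L\phi_{\neq}\| \leq \|\Delta_L\phi_{\neq}\| = \|\Omega_{\neq} - \mathcal R\|$, then give
$$\|A^s_E\mathcal R\|_2 + \|\mathcal R\|_{H^\sigma} \;\lesssim\; \epsilon\nu^{1/6}\bigl(\|A^s_E\Omega\|_2 + \|\Omega\|_{H^\sigma} + \|A^s_E\mathcal R\|_2 + \|\mathcal R\|_{H^\sigma}\bigr).$$
For $\epsilon$ small, the self-referential $\mathcal R$ term absorbs, leaving the error $\lesssim \epsilon\nu^{1/6}(\|A^s_E\Omega\|_2 + \|\Omega\|_{H^\sigma})$; dividing through by $\la t\ra^2$ closes the proof.

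The main obstacle is the tight derivative count in the resonant region $\mathcal B$: the gain of $\la t\ra^{-2}$ from inverting $\Delta_L$ costs two derivatives of $\phi$, while the enhanced-dissipation weight $D(t,\eta)$ already contributes up to three extra derivatives through $\nu|\eta|^3$, or a growing time factor $\nu t^3$. Balancing these losses against the Sobolev fallback $\la k,\eta\ra^\sigma$ is precisely what requires $\sigma \geq s+7$, and it is this tension between $A^s_E$ and $H^\sigma$ that the non-resonant/resonant decomposition is tailored to exploit.
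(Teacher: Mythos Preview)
Your multiplier estimate --- splitting into the non-resonant region where $\la t\ra^2/(k^2+(\eta-kt)^2)\lesssim 1$ and the resonant region where $|\eta|\gtrsim t$ forces $D(t,\eta)\approx\nu|\eta|^3$ --- is correct and is the heart of the argument (the paper gives no proof here, only a citation to \cite{MZ19}, and what you wrote is essentially that argument). However, your treatment of the remainder $\mathcal R$ has a genuine gap at the top regularity.

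You claim $\|\mathcal R\|_{H^\sigma}\lesssim \epsilon\nu^{1/6}\|\Delta_L\phi_{\neq}\|_{H^\sigma}$, but the second piece of $\mathcal R$ carries the coefficient $\bar U''+\partial_{yy}\psi$, and in the $Y$-variables $\partial_{yy}\psi=(\bar U'+h)\partial_Yh$. The bootstrap hypotheses \eqref{BS2.2} only give $\|h\|_{H^\sigma}\lesssim\epsilon\nu^{1/6}$, hence $\|\partial_{yy}\psi\|_{H^{\sigma-1}}\lesssim\epsilon\nu^{1/6}$; they do \emph{not} bound $\|\partial_Yh\|_{H^\sigma}$ pointwise in time (only $\nu\int\|\partial_Yh\|_{H^\sigma}^2$ is controlled). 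A tame product estimate in $H^\sigma$ therefore leaves an uncontrolled high--low term $\|\partial_{yy}\psi\|_{H^\sigma}\|\partial_Y^L\phi_{\neq}\|_{L^\infty}$, and your self-absorption for $\|\mathcal R\|_{H^\sigma}$ does not close. This is exactly the obstruction recorded in Lemma~\ref{lem3.3}, where $\|\Delta_L\Delta_t^{-1}\Omega_{\neq}\|_{H^\sigma}$ acquires an extra $\|\partial_Yh\|_{H^\sigma}$ term.

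The repair is simple and uses the slack in $s\leq\sigma-7$ that you already identified. On the resonant set one has $\nu\la t\ra^2\la k,\eta\ra^{s+3}\lesssim \nu\,t^{\,s+6-(\sigma-1)}\la k,\eta\ra^{\sigma-1}\leq \la k,\eta\ra^{\sigma-1}$, so the multiplier bound can be stated with $\la k,\eta\ra^{\sigma-1}$ in place of $\la k,\eta\ra^\sigma$. Then one only needs $\|\Delta_L\phi_{\neq}\|_{H^{\sigma-1}}$ for the fallback, and at regularity $\sigma-1$ both coefficients of $\mathcal R$ are genuinely $\lesssim\epsilon\nu^{1/6}$ (this is the content of Lemma~\ref{lem3.1} for $\gamma\leq\sigma-1$), so your absorption argument goes through and yields $\|\mathcal R\|_{H^{\sigma-1}}+\|A^s_E\mathcal R\|_2\lesssim\epsilon\nu^{1/6}(\|\Omega\|_{H^{\sigma}}+\|A^s_E\Omega\|_2)$. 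An equivalent fix, closer to the organization in \cite{MZ19}, is to keep $\Delta_L^{-1}$ attached to the remainder and absorb directly into $\|A^s_E\phi_{\neq}\|_2$: since $|\xi-kt|^2/(k^2+(\eta-kt)^2)\lesssim\la\eta-\xi\ra^2$ and $A^s_E(k,\eta)/A^s_E(k,\xi)\lesssim\la\eta-\xi\ra^{s+3}$, one gets $\|A^s_E\Delta_L^{-1}\mathcal R\|_2\lesssim\epsilon\nu^{1/6}\|A^s_E\phi_{\neq}\|_2$ using only $H^{s+6}\subset H^{\sigma-1}$ control on the coefficients. Either route explains why the hypothesis is $s\leq\sigma-7$ rather than the $s\leq\sigma-5$ that your resonant bound alone would suggest.
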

The proof of the above two Lemmas can be found in \cite{MZ19}.
\begin{lemma}\label{lem4.5}
Under the bootstrap hypotheses, it holds  that for $\epsilon$ sufficiently small and for $s\leq \sigma-7$ that
$$\|A^s_E\phi_{\neq}\|_{2}\lesssim \nu \|\phi_{\neq}\|_{s+3}+ \|\nu t^3\phi_{\neq}\|_{H^{s}}).$$
\end{lemma}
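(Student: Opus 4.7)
The plan is to read off the estimate directly from the Fourier-side definition of $A_E^s$, using the pointwise splitting of the multiplier $D(t,\eta)$ into its two nonnegative constituents. Recall that by definition
$$A_E^s(t,k,\eta)=\la k,\eta\ra^s D(t,\eta),\qquad D(t,\eta)=\tfrac{1}{3}\nu|\eta|^3+\tfrac{1}{24}\nu(t^3-8|\eta|^3)_+.$$
The single observation driving the proof is that the positive part satisfies the trivial majorization $(t^3-8|\eta|^3)_+\leq t^3$, hence
$$D(t,\eta)\leq \tfrac{1}{3}\nu|\eta|^3+\tfrac{1}{24}\nu t^3.$$

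First I would apply Plancherel to write $\|A_E^s\phi_{\ne}\|_2^2=\sum_{k\ne0}\int\la k,\eta\ra^{2s}D(t,\eta)^2|\hat\phi_k(\eta)|^2\,d\eta$. Then I would insert the pointwise bound above and use $(a+b)^2\lesssim a^2+b^2$ to split the integrand into two pieces. In the first piece, use $|\eta|^3\leq\la k,\eta\ra^3$ to absorb the frequency weight into $\la k,\eta\ra^{2(s+3)}$, giving a contribution bounded by $\nu^2\|\phi_{\ne}\|_{H^{s+3}}^2$. The second piece factors as $(\nu t^3)^2\,\la k,\eta\ra^{2s}|\hat\phi_k(\eta)|^2$, which integrates to $\|\nu t^3\phi_{\ne}\|_{H^s}^2$. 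Taking square roots and applying the triangle inequality yields the stated bound.

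There is essentially no obstacle here, since the estimate is a purely algebraic consequence of the structure of $D(t,\eta)$; the bootstrap hypotheses and smallness of $\epsilon$ are not actually used in the proof itself, and in fact one could write the result with explicit constants. The only mild care needed is to notice that the cutoff $(t^3-8|\eta|^3)_+$ is indeed uniformly controlled by $t^3$, and that restricting to $k\ne 0$ (which defines the $A_E^s$ norm) plays no role beyond the notational.
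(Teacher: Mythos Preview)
Your proof is correct and follows essentially the same approach as the paper: both arguments bound $D(t,\eta)$ pointwise by (a constant times) $\nu|\eta|^3+\nu t^3$, then absorb $|\eta|^3\le\la k,\eta\ra^3$ into the Sobolev weight. Your observation that the bootstrap hypotheses and the smallness of $\epsilon$ play no role in this particular lemma is accurate.
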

\begin{proof}
By definition, it is obvious that 
$$\nu\max\{t^3,|\eta|^3\}\lesssim D(t,\eta)\lesssim \nu\max\{t^3,|\eta|^3\}.$$
Therefore
\begin{equation}
\begin{split}
\|A^s_E\phi_{\neq}\|\lesssim & \nu\|\la k,\eta\ra^{s}|\eta|^3\phi_{\neq}\| +\nu t^3\|\phi_{\neq}\|_{H^s}\\
\lesssim & \nu\||\phi_{\neq}\|_{H^{s+3}} +\nu t^3\|\phi_{\neq}\|_{H^s}.
\end{split}
\end{equation}
The result follows.
\end{proof}

\section{Estimate of the dissipation error term}
\setcounter{equation}{0}

In this section, we estimate the \emph{Dissipation Error} term in \eqref{eq2.1}.  Set $\vartheta(t,y)=\bar U(t,y)+\psi(t,y)$ and  recall
\begin{equation}\label{DE}
\begin{split}
DE(t)=\nu\int A^{\sigma}\Omega A^{\sigma}(\tilde\Delta_t\Omega)dXd  =   -\nu\|\sqrt{-\Delta_L}A^{\sigma}\Omega\|^2_2 +E^0 +E^{\neq}.
\end{split}
\end{equation}
In what follows, we separate the estimate into the zero and non-zero mode cases.

\underline{Estimate of the Zero mode.} By splitting in the frequency space and the fact that $A_0^{\sigma}(\eta)\approx \la \eta\ra^{\sigma}\approx 1+|\eta|^{\sigma}$ and $|\eta|\leq |\xi|+|\eta-\xi|\lesssim \max\{|\xi|,|\eta-\xi|\}$, we have
\begin{equation}\label{e5.1}
\begin{split}
|E_0|= & \left|\nu\int A^{\sigma}\Omega_0 A^{\sigma}[(1-(\bar U'+\partial_y\psi)^2)\partial_{YY}^L\Omega_0] dXdY\right|\\
\lesssim & \nu\|\partial_Y\Omega_0\|^2_{H^2}\|1-(\bar U'+\partial_y\psi)^2\|_{L^2}   +\nu \|\partial_Y\Omega_0\|^2_{H^2}\|(\bar U'+\partial_y\psi)\partial_Y(\bar U'+\partial_y\psi)\|_{L^2} \|\Omega_0\|_{L^2}       \\
 & + \nu\|\partial_Y\Omega_0\|^2_{H^{\sigma}}\|1-(\bar U'+\partial_y\psi)^2\|_{H^2}     +   \nu \|\partial_Y\Omega_0\|^2_{H^{\sigma}}\|(\bar U'+\partial_y\psi)\partial_Y(\bar U'+\partial_y\psi)\|_{H^{\sigma-2}} \|\Omega_0\|_{H^4}  \\
\lesssim & \nu (1+\|1+(\bar U'+\partial_y\psi)\|_{H^2})\Big(\|\partial_Y A^{\sigma}\Omega_0\|^2_2 \|1-(\bar U'+\partial_y\psi)\|_{H^2} \\
& +\|\partial_Y(\bar U'+\partial_y\psi)\|_{H^{\sigma-2}}\|\partial_Y \Omega_0\|^2_{H^{\sigma}}\|\Omega_0\|_{H^4}\Big)\\
\lesssim & \epsilon \nu\nu^{1/3} \|\partial_YA^{\sigma}\Omega_0\|^2_{2} + \epsilon \nu\nu^{1/3} (\|\bar U''\|^2_{H^{\sigma-2}} +\|\partial_Yh\|^2_{H^{\sigma-2}}).
\end{split}
\end{equation}
Here we have used
$$\partial_Y\bar U'=(\bar U'+\psi_y)^{-1}\bar U'',\ \ \ \partial_Y\partial_y\psi=\partial_Yh$$ 
and the estimate
\begin{equation}
\begin{split}
\|\partial_Y(\bar U'+\partial_y\psi)\|_{H^{\sigma-2}}  \lesssim & (1+\|\bar U'+\partial_y\psi\|_{H^{\sigma-2}})\|\bar U''\|_{H^{\sigma-2}} +\|\partial_Yh\|_{H^{\sigma-2}}.
\end{split}
\end{equation}

\underline{Estimate of the non-zero mode.}
We use the paraproduct decomposition in $Y$, to obtain (see \eqref{e2.45})
$$E^{\neq}=E^{\neq}_{LH}+E^{\neq}_{HL}+E^{\neq}_{HH},$$
where
\begin{equation}
\begin{split}
E_{LH}^{\neq}=  & -\sum_{M\geq8} \nu\int A^{\sigma}\Omega_{\neq} A^{\sigma}\left[(1-(\bar U'+\partial_y\psi)^2)_{<M/8}(\partial_{Y}-t\partial_X)^2(\Omega_{\neq})_{M}\right]dXdY,\\
E_{HL}^{\neq}=  & -\sum_{M\geq8} \nu\int A^{\sigma}\Omega_{\neq} A^{\sigma}\left[(1-(\bar U'+\partial_y\psi)^2)_{M}(\partial_{Y}-t\partial_X)^2(\Omega_{\neq})_{<M/8}\right]dXdY,\\
E_{HH}^{\neq}=  & -\sum_{M\in \Bbb D}\sum_{\frac18M\leq M'\leq 8M} \nu\int A^{\sigma}\Omega_{\neq} A^{\sigma}\left[(1-(\bar U'+\partial_y\psi)^2)_{M}(\partial_{Y}-t\partial_X)^2(\Omega_{\neq})_{M'}\right]dXdY.
\end{split}
\end{equation}

In the following, we estimate the three terms. 

\underline{LH term.} For this term, we have
\begin{equation*}
\begin{split}
|E_{LH}^{\neq}| \lesssim   & \nu\sum_{M\geq8}\sum_{k\neq 0}\int_{\eta,\xi} A^{\sigma}_k|\overline{\widehat{\Omega_{k}}}(\eta)| A^{\sigma}_k\left[\widehat{(1-(\bar U'+\partial_y\psi)^2)}_{<M/8}(\partial_{Y}-t\partial_X)^2(\Omega_{\neq})_{M}\right]d\xi d\eta\\
\lesssim &  \nu\sum_{M\geq8}\sum_{k\neq 0}\int_{\eta,\xi} \sqrt{k^2+|\eta-kt|^2}A^{\sigma}_k|\overline{\widehat{\Omega_{k}}}(\eta)|   \widehat{\la\partial_Y\ra(1-(\bar U'+\partial_y\psi)^2)}_{<M/8}|\xi-kt|A^{\sigma}_k(\xi)\Omega_{k}(\xi)_{M} d\xi d\eta\\
\lesssim & \nu\sum_{M\geq8}\|(\sqrt{-\Delta_L}A^{\sigma}\Omega_{\neq})_{\sim M}\|_{L^2}^2    \|(1-(\bar U'+\partial_y\psi)^2)\|_{H^4}\\
\lesssim  & \nu \|\sqrt{-\Delta_L}A^{\sigma}\Omega_{\neq}\|^2_2 \|1-(\bar U'+\partial_y\psi)^2\|_{H^4},
\end{split}
\end{equation*}
where we have used the fact that in this regime,
$$\xi\sim\eta\sim M,\ \ \ |k,\eta|\sim|k,\xi|,\ \ \ |\xi-kt|\lesssim |\xi-\eta|+|\eta-kt|\lesssim \la\xi-\eta\ra\sqrt{k^2+|\eta-kt|^2}.$$

\underline{HL term.} For the High-Low term, we have
\begin{equation}
\begin{split}
|E_{HL}^{\neq}| \lesssim   & \nu\sum_{M\geq8}\sum_{k\neq 0}\int_{\eta,\xi} \Big(1_{|\eta|\leq 16|k|} + 1_{|\eta|\geq 16|k|}\Big)A^{\sigma}_k(\eta)|\overline{\widehat{\Omega_{k}}}(\eta)| A^{\sigma}_k(\eta)\\
& \ \ \ \ \ \ \  \ \ \ \ \ \ \  \ \ \ \ \ \times \left[\widehat{(1-(\bar U'+\partial_y\psi)^2)}(\eta-\xi)_{M/8}(\xi-kt)^2\Omega_{\neq}(\xi)_{<M/8}\right]d\xi d\eta.
\end{split}
\end{equation}
In the $|\eta|\leq 16|k|$ regime, we have $|k,\eta|\sim |k| \sim |k,\xi|$ and $|\xi-kt|\lesssim \la \xi-\eta\ra\sqrt{k^2+|\eta-kt}$, while in the $|\eta|\geq16|k|$ regime, we have $|k,\eta|\sim |\eta| \sim |\eta-\xi|\sim M$. Then we have
\begin{equation}
\begin{split}
|E_{HL}^{\neq}| \lesssim   & \nu\sum_{M\geq8}M^{-2} \|(1-(\bar U'+\partial_y\psi)^2)_{\sim M}\|_{H^5} \|\sqrt{-\Delta_L}A^{\sigma}\Omega_{\neq}\|_{L^2}^2 \\
&  + \nu\sum_{M\geq8}\|\Omega_{\neq,\sim M}\|_{H^{\sigma}} \|\partial_Y(1-(\bar U'+\partial_y\psi)^2)\|_{H^{\sigma-1}} \la t\ra^2 \|\Omega_{\neq}\|_{H^5}.
\end{split}
\end{equation}
Summing in $M\geq8$, we  have
$$|E^{\neq}_{HL}| \lesssim \nu \|\sqrt{-\Delta_L}A^{\sigma}\Omega_{\neq}\|^2_2 \|1-(\bar U'+\partial_y\psi)^2\|_{H^5} +\nu \|\Omega_{\neq}\|_{H^{\sigma}} \|\partial_Y(\bar U'+\partial_y\psi)^2\|_{H^{\sigma-1}}\la t\ra^2\|\Omega_{\neq}\|_{H^5}.$$

\underline{HH term.} In this case, it holds $|\eta-\xi|\sim |\xi| \sim M'$, and we can divide 
\begin{equation}
\begin{split}
|E_{HH}^{\neq}| \lesssim   & \nu\sum_{M\in\Bbb D}\sum_{\frac18M\leq M'\leq8M}\sum_{k\neq 0}\int_{\eta,\xi} \Big(1_{|k|\geq 16|\xi|}  +  1_{|k|\leq 16|\xi|}\Big)A^{\sigma}_k(\eta)|\overline{\widehat{\Omega_{k}}}(\eta)| \\
& \ \ \ \ \ \ \ \ \ \ \ \ \ \ \ \ \times A^{\sigma}_k(\eta) \widehat{(1-(\bar U'+\partial_y\psi)^2)}(\eta-\xi)_{M'}|\xi-kt|^2\Omega_{\neq}(\xi)_{M} d\xi d\eta.
\end{split}
\end{equation}
Similarly, we have
\begin{equation}
\begin{split}
|k| \sim |k,\eta| \sim |k|+|\eta-\xi|+|\xi| \sim |k|,  &  \ \ \ \ \ \ \ |k|\geq 16|\xi|,\\
|k,\eta| \lesssim |k| +|\eta| \lesssim  |k|+|\eta-\xi|+|\xi| \sim |k|+|\xi| \lesssim |\xi| \sim |\xi-\eta|,   &  \ \ \ \ \ \ \ |k|\leq 16|\xi|,
\end{split}
\end{equation}
and in both regimes, we have
 $$|\xi-kt|\lesssim |\xi-\eta|+|\eta-kt|\lesssim  \la \xi-\eta\ra\sqrt{k^2+|\eta-kt}.$$
Therefore, we have 
\begin{equation}
\begin{split}
|E_{HH}^{\neq}| \lesssim   & \nu\sum_{M\in\Bbb D}  \|(1-(\bar U'+\partial_y\psi)^2)_{\sim M}\|_{H^3} \|\sqrt{-\Delta_L}A^{\sigma}\Omega_{\neq}\|_{L^2}\|\sqrt{-\Delta_L}A^{\sigma}\Omega_{\neq,M}\|_{L^2} \\
\lesssim & \nu  \|(1-(\bar U'+\partial_y\psi)^2\|_{H^3} \|\sqrt{-\Delta_L}A^{\sigma}\Omega_{\neq}\|_{L^2}^2.
\end{split}
\end{equation}

Putting these estimates together, we obtain
\begin{equation}\label{e5.2}
\begin{split}
E^{\neq} \lesssim &  \nu \|\sqrt{-\Delta_L}A^{\sigma}\Omega_{\neq}\|^2_2 \|1-(\bar U'+\partial_y\psi)^2\|_{H^5} +\nu \|\Omega_{\neq}\|_{H^{\sigma}} \|\partial_Y(\bar U'+\partial_y\psi)^2\|_{H^{\sigma-1}}\la t\ra^2\|\Omega_{\neq}\|_{H^5}\\
\lesssim  &  \nu (1+(\|\bar U'\|_{H^5}+\|\partial_y\psi\|_{H^5}))\|\sqrt{-\Delta_L}A^{\sigma}\Omega_{\neq}\|^2_2 (\|\bar U'\|_{H^5}+\|\partial_y\psi\|_{H^5}) \\
&  +  \nu\left( (1+\|\bar U'+\partial_y\psi\|_{H^{\sigma-1}})\|\bar U''\|_{H^{\sigma-1}} +\|\partial_Yh\|_{H^{\sigma-1}}\right) \|\Omega_{\neq}\|_{H^{\sigma}}   \la t\ra^2\|\Omega_{\neq}\|_{H^5}  \\
\lesssim   &  \nu\epsilon\nu^{1/6} \|\sqrt{-\Delta_L}A^{\sigma}\Omega_{\neq}\|^2_2    + (\epsilon\nu^{1/3})^2\nu\frac{\la t\ra^2}{\la \nu t^3\ra} (\|\bar U''\|_{H^{\sigma-1}}+\|\partial_Yh\|_{H^{\sigma-1}}).
\end{split}
\end{equation}

Integrating \eqref{e2.45} over $[1,t]$ in time gives
\begin{equation}
\begin{split}
\int_1^t  &  DEd\tau =   \nu\int_1^t\int A^{\sigma}\Omega A^{\sigma}(\tilde\Delta_t\Omega)dXdYd\tau\\
\leq &  -\int_1^t\nu\|\sqrt{-\Delta_L}A^{\sigma}\Omega\|^2_2 d\tau  +  \int_1^t |E^0| +|E^{\neq}|d\tau\\
\leq  &  -\int_1^t\nu\|\sqrt{-\Delta_L}A^{\sigma}\Omega\|^2_2 d\tau  + C\epsilon\nu^{1/3}\int_1^t\nu (\|\bar U''\|^2_{H^{\sigma-2}} +\|\partial_Yh\|^2_{H^{\sigma-2}})d\tau     \\
&  +  C\epsilon\nu^{1/6}  \int_1^t \nu \|\sqrt{-\Delta_L}A^{\sigma}\Omega\|^2_2d\tau   + C(\epsilon\nu^{1/3})^2\int_1^t\nu\frac{\la \tau\ra^2}{\la \nu \tau^3\ra} (\|\bar U''\|_{H^{\sigma-1}}+\|\partial_Yh\|_{H^{\sigma-1}})d\tau. 
\end{split}
\end{equation}
Taking $\epsilon$ small  enough and using the decay estimate of $\bar U''$ and the bootstrap hypotheses, we have
\begin{equation}
\begin{split}
\int_1^t    DEd\tau \leq  &   -\frac78\int_1^t\nu\|\sqrt{-\Delta_L}A^{\sigma}\Omega\|^2_2 d\tau  + C\epsilon\nu^{1/3}\int_1^t\nu (\|\bar U''\|^2_{H^{\sigma-2}} +\|\partial_Yh\|^2_{H^{\sigma-2}})d\tau     \\
& + C(\epsilon\nu^{1/3})^2\int_1^t\nu\frac{\la \tau\ra^2}{\la \nu \tau^3\ra} (\|\bar U''\|_{H^{\sigma-1}}+\|\partial_Yh\|_{H^{\sigma-1}})d\tau \\
\leq  & -\frac78\int_1^t\nu\|\sqrt{-\Delta_L}A^{\sigma}\Omega\|^2_2 d\tau   +C\epsilon\nu^{4/3}\int_1^t\|\bar U''\|^2_{H^{\sigma-2}}d\tau  +C\epsilon^3\nu  \\
&   +  C\epsilon^2\nu  \left(\int_1^t\frac{1}{\la \nu^{1/3} \tau\ra}d\tau\right)^{1/2}  \left(\int_1^t (\|\bar U''\|^2_{H^{\sigma-1}}+\|\partial_Yh\|^2_{H^{\sigma-1}})d\tau \right)^{1/2}    \\
\leq & -\frac78\int_1^t\nu\|\sqrt{-\Delta_L}A^{\sigma}\Omega\|^2_2 d\tau   +C\epsilon^2\nu^{4/3}  +C\epsilon^3\nu    +C\epsilon^3\nu^{2/3}.  
\end{split}
\end{equation}

In summary, we have proved the following Proposition. 
\begin{prop}
Under the bootstrap hypotheses,
\begin{equation}
\begin{split}
\nu\int_1^t\int A^{\sigma}\Omega A^{\sigma}(\tilde\Delta_t\Omega)dXdYd\tau \leq -\frac78\int_1^t\nu\|\sqrt{-\Delta_L}A^{\sigma} \Omega\|^2_2d\tau +C\max\{\nu^{2/3},\epsilon\nu{1/2},\epsilon\}\cdot \epsilon^2\nu^{2/3}.
\end{split}
\end{equation}
\end{prop}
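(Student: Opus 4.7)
The plan is to expand $\widetilde{\Delta_t} = \Delta_L + (1-(\bar U'+\partial_y\psi)^2)\partial_{YY}^L$ so that the principal term produces the desired dissipation $-\nu\|\sqrt{-\Delta_L}A^{\sigma}\Omega\|_2^2$, and then show the perturbative error is absorbed either into a small fraction of this dissipation or into the tolerated $O(\epsilon^3\nu^{2/3})$ error. Writing $DE(t) = -\nu\|\sqrt{-\Delta_L}A^{\sigma}\Omega\|_2^2 + E^0 + E^{\neq}$, I would split the argument into the zero mode $E^0$ (where $\partial_Y^L = \partial_Y$) and the non-zero mode $E^{\neq}$, since these have qualitatively different available bounds.

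For $E^0$, I would use a Cauchy-Schwarz and product estimate, distributing derivatives so that at least one copy of $\partial_Y$ falls on $\Omega_0$ (giving $\nu\|\partial_Y A^\sigma\Omega_0\|_2^2$, absorbable into the dissipation) while the remaining $H^\sigma$ factor $1-(\bar U'+\partial_y\psi)^2 = -(\widetilde{U}' + h)(2+\widetilde{U}'+h)$ is controlled in $H^\sigma$ by $\epsilon\nu^{1/6}$ via the smallness of $\|U-y\|_{H^\sigma}\leq \epsilon\nu^{1/3}$ combined with bootstrap hypothesis on $h$. A second configuration puts all derivatives on the coefficient, producing $\|\bar U''\|_{H^{\sigma-2}} + \|\partial_Y h\|_{H^{\sigma-2}}$, whose time integrals are bounded via the one-dimensional heat decay (using the $L^1$ hypothesis on $U(y)-y$) and bootstrap \eqref{BS2.2}.

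For $E^{\neq}$, I would run a standard Bony paraproduct decomposition in $Y$: $E^{\neq} = E^{\neq}_{LH} + E^{\neq}_{HL} + E^{\neq}_{HH}$. In the LH and HH regimes, both factors of $(\partial_Y^L)^2\Omega_{\neq}$ can be absorbed into the $\sqrt{-\Delta_L}A^\sigma\Omega_{\neq}$ norm using the bilinear inequality $|\xi-kt|\leq \langle\xi-\eta\rangle\sqrt{k^2+|\eta-kt|^2}$ valid in those frequency regions, and the coefficient is placed in a low-Sobolev norm bounded by $\epsilon\nu^{1/6}$. For HL I further split into $|\eta|\leq 16|k|$ and $|\eta|\geq 16|k|$: the former subregion resembles LH; the latter forces all regularity onto the coefficient, producing the slowly-decaying factor $\langle t\rangle^2\|\Omega_{\neq}\|_{H^5}$ which must be controlled. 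I expect this to be the main technical obstacle, and I would handle it by using $\|\Omega_{\neq}\|_{H^5}\lesssim \epsilon\nu^{1/3}\langle\nu t^3\rangle^{-1}$ (from the enhanced dissipation bootstrap) so that $\nu\langle t\rangle^2 \langle \nu t^3\rangle^{-1}$ is time-integrable after Cauchy-Schwarz against $\|\bar U''\|_{H^{\sigma-1}}+\|\partial_Y h\|_{H^{\sigma-1}}$.

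Finally I would combine the mode estimates: pick $\epsilon$ small enough that the LH/HH/HL-small-$\eta$ pieces absorb into $\tfrac18\nu\|\sqrt{-\Delta_L}A^\sigma\Omega\|_2^2$, leaving $-\tfrac78\nu\|\sqrt{-\Delta_L}A^\sigma\Omega\|_2^2$ as claimed. The time-integrated residuals reduce to $\epsilon\nu^{4/3}\int_1^t\|\bar U''\|^2_{H^{\sigma-2}}d\tau$, $\epsilon\nu\int_1^t\|\partial_Y h\|^2_{H^{\sigma-2}}d\tau$, and a $(\epsilon\nu^{1/3})^2$-weighted Cauchy-Schwarz pairing $\big(\int\langle\nu^{1/3}\tau\rangle^{-1}d\tau\big)^{1/2}\big(\int(\|\bar U''\|^2_{H^{\sigma-1}}+\|\partial_Y h\|^2_{H^{\sigma-1}})d\tau\big)^{1/2}$. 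The first integral is $O(\nu^{-1/2})$ by the $L^2L^2$ decay of $\bar U''$ from the heat equation applied to $U-y$ with $\|U-y\|_{L^1}\leq \epsilon\nu^{5/4}$, the second is $O(\nu^{-1})$ by bootstrap on $h$, and the third combines a logarithmic factor with bootstrap; collecting yields the stated $C\max\{\nu^{2/3},\epsilon\nu^{1/2},\epsilon\}\epsilon^2\nu^{2/3}$ error. The real bookkeeping challenge is tracking which factor of $\epsilon$ and which power of $\nu$ is produced by each paraproduct piece so that the worst residual respects the claimed $\epsilon^2\nu^{2/3}$ bound without losing the smallness prefactor $\max\{\nu^{2/3},\epsilon\nu^{1/2},\epsilon\}$.
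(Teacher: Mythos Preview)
Your proposal is correct and follows essentially the same approach as the paper: the decomposition $DE(t)=-\nu\|\sqrt{-\Delta_L}A^{\sigma}\Omega\|_2^2+E^0+E^{\neq}$, the frequency-splitting treatment of $E^0$, the Bony paraproduct decomposition of $E^{\neq}$ into LH/HL/HH pieces with the further split of HL into $|\eta|\le 16|k|$ and $|\eta|\ge 16|k|$, and the final time-integration with Cauchy--Schwarz against $\|\bar U''\|_{H^{\sigma-1}}+\|\partial_Y h\|_{H^{\sigma-1}}$ all match the paper's argument. The only minor discrepancy is in the exact bookkeeping of the $\bar U''$ residual (you quote $O(\nu^{-1/2})$ where the relevant squared time integral is $O(\nu^{-1})$ via the $L^2_t H^s_y$ bound $\delta\nu^{-1/2}$), but this does not affect the final outcome.
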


\section{Estimate of the convective term}
\setcounter{equation}{0}

In this section, we estimate \eqref{e2.44}.
 
\underline{Transport term $T_N$.}
Now, we consider the transport term $T_N$ in \eqref{e2.44-1}. Recall
\begin{equation*}
\begin{split}
T_N= &  \int A^{\sigma}\Omega \left[A^{\sigma}(u_{<N/8}\cdot\nabla_{X,Y}\Omega_N -u_{<N/8}\cdot\nabla_{X,Y}A^{\sigma}\Omega_N)\right] dXdY,
\end{split}
\end{equation*}
where
$$u(t,X,Y)=\left(
      \begin{array}{c}
         0\\
         g\\
      \end{array}
 \right) +(\bar U'+\partial_y\psi) \nabla^{\bot}_{X,Y}\phi_{\ne}.$$
In the Fourier space, we can rewrite
\begin{equation}
\begin{split}
T_N= & i \sum_{k,l} \int_{\eta,\xi} A^{\sigma}_k(\eta)\overline{\widehat{\Omega _k}}(\eta)   \hat u_{k-l}(\eta-\xi)_{<N/8}  \cdot   (l,\xi)   A^{\sigma}_l(\xi)   \widehat{\Omega_l}(\xi)_{N} \left(\frac{A^{\sigma}_{k}(\eta)}{A^{\sigma}_{l}(\xi)}-1\right)\left( \frac{w_l(t,\xi)}{w_{k}(t,\eta)}\right)   d\xi d\eta \\
&  +   i \sum_{k,l} \int_{\eta,\xi} A^{\sigma}_k(\eta)\overline{\widehat{\Omega _k}}(\eta)   \hat u_{k-l}(\eta-\xi)_{<N/8}  \cdot   (l,\xi)   A^{\sigma}_l(\xi)   \widehat{\Omega_l}(\xi)_{N} \left( \frac{w_l(t,\xi)}{w_{k}(t,\eta)}-1\right)   d\xi d\eta,
\end{split}
\end{equation}
thanks to the definition of the multipliers
\begin{equation}\label{e3.9}
\begin{split}
A_k^{\sigma}(t,\eta)=\frac{\la k,\eta\ra^{\sigma}}{w_k(t,\eta)}.
\end{split}
\end{equation}
As in \cite{MZ19}, we have
\begin{equation}
\begin{split}
|T_N^1|   \lesssim &  \|A^{\sigma}\Omega_{\sim N}\|_2   \|A^{\sigma}\Omega\|_2  \|u\|_{H^4},\\
|T_N^1|   \lesssim &  \|A^{\sigma}\Omega_{\sim N}\|_2   \|A^{\sigma}\Omega_N\|_2  (\|g\|_{H^4}+\|u_{\neq}\|_{H^4}) \left(\nu^{-1/3}\chi_{t\lesssim \nu^{-1/3}}(t)  +\nu^{\beta/3}t^{1-\beta}\chi_{t\gtrsim \nu^{-1/3}}(t)\right).
\end{split}
\end{equation}
Now, since 
$$u_{\neq} =(\bar U'+\partial_y\psi) \nabla^{\bot}_{X,Y}\phi_{\ne},  $$
we have from Lemma \ref{lem3.1} with $s'=2$ that
$$\|u_{\neq}\|_{H^4}\lesssim  (\|\bar U'\|_{H^4}+\|h\|_{H^4})\|\nabla^{\bot}_{X,Y}\phi_{\ne}\|_{H^4} \lesssim  (\|\bar U'\|_{H^4}+\|h\|_{H^4})\frac{1}{\langle t\rangle^{2}}\|\langle\partial_X\rangle^{2}\Omega_{\ne}\|_{H^{7}}.$$
Therefore, by using the bootstrap hypotheses of $g$, $h$ and $\Omega$, and  the assumption of smallness of $\bar U$, we have
$$|T_N|  \lesssim  \epsilon  \|A^{\sigma}\Omega_{\sim N}\|_2   \|A^{\sigma}\Omega_N\|_2  \left( \frac{\chi_{t\lesssim \nu^{-1/3}}(t)}{\la t\ra^2}  + \frac{\nu^{(\beta+1)/3}t^{1-\beta}\chi_{t\gtrsim \nu^{-1/3}}(t)}{\la t\ra^{1+\beta}}\right). $$

By  integration in $t$ over $[1,t]$, we have the following estimate  
\begin{equation}\label{prop2.6}
\begin{split}
\int_1^t\sum_{N\geq8}T_N(\tau)d\tau \lesssim \epsilon \sup_{\tau\in [1,t]}\|A^{\sigma}\Omega(\tau)\|^2_2.
\end{split}
\end{equation}

\underline{Remainder term $\mathcal R$.}
In the regime $N\sim N'$, we have $|l,\xi|\sim |k-l,\eta-\xi|$ and hence $A^{\sigma}_k(\eta)\lesssim \la k,\eta\ra^{\sigma} \lesssim \la l,\xi\ra^{\sigma} +\la k-l,\eta-\xi\ra^{\sigma} \sim  \la k-l,\eta-\xi\ra^{\sigma}   \lesssim A^{\sigma}_{k-l}(\eta-\xi)$. Therefore,
$$|\mathcal R|  \lesssim  \sum_{N\in \Bbb D}\|A^{\sigma}\Omega\|_2 \|\Omega_{\sim N}\|_{H^{\sigma}} \|u_{N}\|_{H^3} \lesssim \|A^{\sigma}\Omega\|^2 \|u\|_{H^3} \lesssim  \frac{\epsilon \nu^{1/3}}{\la t\ra^2}\|A^{\sigma}\Omega\|^2.$$
Integrating over $[1,t]$ in time, we have
\begin{equation}\label{prop2.7}
\begin{split}
\int_1^t|\mathcal R(\tau)| d\tau \lesssim C\epsilon\sup_{\tau\in [1,t]}\|A^{\sigma}\Omega(\tau)\|^2_2.
\end{split}
\end{equation}

\underline{Reaction term $R_N$.} Now we estimate the reaction term $R_N$.  From \eqref{e2.18}, we have
$$u(t,X,Y)=\left(
      \begin{array}{c}
         0\\
         g\\
      \end{array}
 \right)   + \nabla^{\bot}_{X,Y}\phi_{\ne} + (\bar U'-1+\partial_y\psi) \nabla^{\bot}_{X,Y}\phi_{\ne}.$$
Accordingly, $R_N$ is divided into four parts
\begin{equation}
\begin{split}
R_N=    &   \int A^{\sigma}\Omega A^{\sigma}((\nabla^{\bot}_{X,Y}\phi_{\ne})_{N} \cdot\nabla_{X,Y}\Omega_{<N/8})dXdY  + \int A^{\sigma}\Omega  A^{\sigma}(g_N\partial_{Y}\Omega_{<N/8}) dXdY \\
&  +     \int A^{\sigma} \Omega A^{\sigma}  (((\bar U'-1+\partial_y\psi) \nabla^{\bot}_{X,Y}\phi_{\ne})_{N}\cdot\nabla_{X,Y}A^{\sigma}\Omega_{<N/8}) dXdY  \\
&  -    \int A^{\sigma}\Omega  u_N\cdot\nabla_{X,Y}A^{\sigma}\Omega_{<N/8} dXdY    =  R_N^{1} +   R_N^{2}  +   R_N^{1,\epsilon}  + R_N^{3}.
\end{split}
\end{equation}
The first term, after long and tedious estimate (see \cite{MZ19} for more details), we have
\begin{equation}
\begin{split}
|R_N^1| \lesssim &  \left(\frac{\epsilon\nu^{1/3}}{\la \nu^{1/2}t^{3/2}\ra}   +\frac{\epsilon\nu^{1/3}}{\la \nu t^{3}\ra}  +\frac{\epsilon\nu^{1/3}}{\la t^2\ra }\right)  \|A^{\sigma}\Omega_{\sim N}\|_{L^2}  \|A^{\sigma}\Delta_L\Delta_t^{-1}\Omega_{\ne, N}\|_{L^2}   \\
&  + \epsilon \|\sqrt{\frac{\partial_t w}{w}}A^{\sigma}\Omega_{\sim N}\|_{L^2}    \|\sqrt{\frac{\partial_t w}{w}}\chi_RA^{\sigma}\Delta_L\Delta_t^{-1}\Omega_{\ne, N}\|_{L^2}.
\end{split}
\end{equation}
For the $R_N^2$ term, since $|k,\eta-\xi|\lesssim \frac{3}{16}N\leq \frac38|\xi|\sim |k,\eta| $, we have
$$|R_N^2|\lesssim \|A^{\sigma}\Omega_{\sim N}\|_{L^2} \|g_N\|_{H^{\sigma}}  \|f\|_{H^3}.$$
For the $R_N^3$ term, since $|k-l,\eta-\xi|\lesssim \frac{3}{16}N\leq \frac38|l,\xi|$ and $A^{\sigma}$ lands on the low frequencies, we have
$$|R_N^3|\lesssim \|A^{\sigma}\Omega_{\sim N}\|_{L^2} \|u_N\|_{H^{3}}  \|f\|_{H^{\sigma}}.$$
For the $R_N^{1,\epsilon}$ term, we have
\begin{equation}
\begin{split}
|R_N^{1,\epsilon}| \lesssim &  \epsilon\nu^{1/3}\left(\frac{\epsilon\nu^{1/3}}{\la \nu^{1/2}t^{3/2}\ra}   +\frac{\epsilon\nu^{1/3}}{\la \nu t^{3}\ra}  +\frac{\epsilon\nu^{1/3}}{\la t^2\ra }\right)  \|A^{\sigma}\Omega_{\sim N}\|_{L^2}  \|A^{\sigma}\Delta_L\Delta_t^{-1}\Omega_{\ne, N}\|_{L^2}   \\
&  + \epsilon\nu^{1/3}\epsilon \|\sqrt{\frac{\partial_t w}{w}}A^{\sigma}\Omega_{\sim N}\|_{L^2}    \|\sqrt{\frac{\partial_t w}{w}}\chi_RA^{\sigma}\Delta_L\Delta_t^{-1}\Omega_{\ne, N}\|_{L^2}\\
&  + \frac{\epsilon^2\nu^{2/3}}{\la t^2\ra }   \|A^{\sigma}\Omega_{\sim N}\|_{L^2}   \Big( \|\partial_Y(\bar U'+h)\|_{H^{\sigma-1}}   + \|\bar U'-1+h\|_{H^{\sigma-1}}      + \|A^{\sigma}\Delta_L\Delta_t^{-1}\Omega_{\ne, N}\|_{L^2} \Big).
\end{split}
\end{equation}
Putting these estimates together, integrating over $[1,t]$ and using Lemmas \ref{lem3.1} and \ref{lem3.3}, we have
\begin{equation}
\begin{split}
\int_1^t\sum_{N\geq 8}|R_N(\tau)|d\tau  \lesssim &  \int_1^t\left(\frac{\epsilon\nu^{1/3}}{\la \nu^{1/2}t^{3/2}\ra}   +\frac{\epsilon\nu^{1/3}}{\la \nu t^{3}\ra}  +\frac{\epsilon\nu^{1/3}}{\la t^2\ra }\right)  \|A^{\sigma}\Omega \|_{L^2}  \|A^{\sigma}\Delta_L\Delta_t^{-1}\Omega_{\ne}\|_{L^2}  d\tau   \\
&  +  \epsilon   \int_1^t  \|\sqrt{\frac{\partial_t w}{w}}A^{\sigma}\Omega \|_{L^2}  \left( \|\sqrt{\frac{\partial_t w}{w}}\Omega_{\ne}\|_{H^{\sigma}}  + \frac{\epsilon^2\nu^{1/2}}{\la \nu t^{3}\ra} \right)  d\tau  \\
&  +\int_1^t\epsilon\nu^{1/3}\int_1^t\|A^{\sigma}\Omega\|_{L^2} \|g\|_{H^{\sigma}}d\tau    \\
&    + \frac{\epsilon^2\nu^{2/3}}{\la t^2\ra }   \|A^{\sigma}\Omega\|_{L^2}   \Big( \|\partial_Y(\bar U'+h)\|_{H^{\sigma-1}}   + \|\bar U'-1+h\|_{H^{\sigma-1}} \Big)d\tau.\end{split}
\end{equation}
By Young's inequality and bootstrap hypotheses, it gives that
\begin{equation}\label{prop2.8}
\begin{split}
\int_1^t\sum_{N\geq8}R_N(\tau)d\tau \lesssim \epsilon \sup_{\tau\in [1,t]}\|A^{\sigma}\Omega(\tau)\|^2_2 +\epsilon\int_1^tCK_{w}(\tau)d\tau +\epsilon^5\nu^{\frac23}.
\end{split}
\end{equation}
In summary, putting the estimates for $T_N, R_N$ and $\mathcal R$ in \eqref{prop2.6},  \eqref{prop2.7} and \eqref{prop2.8} together, we obtain \eqref{prop2.678}:
\begin{equation*}
\begin{split}
\int_1^tCM(\tau)d\tau \lesssim \epsilon \sup_{\tau\in [1,t]}\|A^{\sigma}\Omega(\tau)\|^2_2 +\epsilon\int_1^tCK_{w}(\tau)d\tau +\epsilon^5\nu^{\frac23}.
\end{split}
\end{equation*}

\section{Estimate of the source term}

In this section, we estimate the source term in \eqref{eq2.1}.

\underline{LH source term.} We write
\begin{equation}
\begin{split}
S^{LH}= & 2\pi\sum_{k,l}\int_{\eta,\xi} A^{\sigma}_k(\eta) \overline{\hat\Omega}_k(\eta)\cdot A^{\sigma}_k(\eta) \widehat{\bar U''}_{k-l}(\eta-\xi)_{<N/8} \widehat{\partial_X\Delta_t^{-1}\Omega_l}(\xi)_{N}d\xi d\eta.
\end{split}
\end{equation}
Note that in this case, $N/2\leq |l,\xi|\leq 3N/2$ and $|k-l,\xi-\eta|\leq 3N/16\leq 3|l,\xi|/8$, and hence $$|k-l|\leq \frac{3|l|}{8},\ \ \ \frac{5|l|}{8}\leq |k|\leq \frac{11|l|}{8}, \ \ \ \frac{5|\xi|}{8}\leq |\eta|\leq \frac{11|\xi|}{8}, \ \ \ \frac{5|l,\xi|}{8}\leq |k,\eta|\leq \frac{11|l,\xi|}{8}.$$ This shows that $|k-l,\xi-\eta|\sim|k,l|$ and hence $$A^{\sigma}_k(\eta)\sim \la k,\eta\ra^{\sigma}\sim \la l,\xi\ra^{\sigma}.$$
Therefore we have
\begin{equation}
\begin{split}
S^{LH}= & 2\pi\sum_{k,l\neq0}\int_{\eta,\xi} A^{\sigma}_k(\eta) \overline{\hat\Omega}_k(\eta)\cdot A^{\sigma}_l(\xi) \widehat{\bar U''}_{k-l}(\eta-\xi)_{<N/8} \widehat{\partial_X\Delta_L^{-1}\Delta_L\Delta_t^{-1}\Omega_l}(\xi)_{N}d\xi d\eta\\
\lesssim & \sum_{N\in \Bbb D} \|A^{\sigma}\Omega\|_{L^2}\|\bar U''\|_{H^3}\|\Delta_L\Delta_t^{-1}\Omega_{\sim N}\|_{H^{\sigma}}\\
\lesssim & \|A^{\sigma}\Omega\|_{L^2}\|\bar U''\|_{H^{\sigma}}\left(\|\Omega_{\neq}\|_{H^{\sigma}}+\frac{\epsilon\nu^{1/3}}{\la t\ra\la \nu t^3\ra}\|\partial_Yh\|_{H^{\sigma}}\right),
\end{split}
\end{equation}
thanks to the fact that $\|\partial_X\Delta_L^{-1}\|\leq1$ for $k\neq 0$ and Lemma \ref{lem3.3}. Similar treatment  can be applied for remainder source term $S^R$ below.

\underline{HL source term.}
We write
\begin{equation*}
\begin{split}
S^{HL}= & 2\pi\sum_{k,l}\int_{\eta,\xi} A^{\sigma}_k(\eta) \overline{\hat\Omega}_k(\eta)\cdot A^{\sigma}_k(\eta) \widehat{\bar U''}_{k-l}(\eta-\xi)_{N} \widehat{\partial_X\Delta_t^{-1}\Omega_{l}}(\xi)_{<N/8}d\xi d\eta.
\end{split}
\end{equation*}
In this regime, we have $|k-l,\eta-\xi|\sim N$ and $|l,\xi|<N/8$,  and hence $|k-l,\eta-\xi|\sim |k,\eta|$. This gives
\begin{equation*}
\begin{split}
S^{HL}= & 2\pi\sum_{k,l}\int_{\eta,\xi} A^{\sigma}_k(\eta) \overline{\hat\Omega}_k(\eta)\cdot A^{\sigma}_k(\eta) \widehat{\bar U''}_{k}(\eta)_{\sim N} \widehat{\partial_X\Delta_t^{-1}\Omega_{l}}(\xi)_{<N/8}d\xi d\eta\\
\lesssim & \sum_{N\in \Bbb D}\|\Omega\|_{H^{\sigma}} \|\bar U''_{\sim N}\|_{H^{\sigma}}\|\Delta_L\Delta_t^{-1}(\Omega_l)_{<N/8}\|_{H^3}\\
\lesssim & \sum_{N\in \Bbb D}\|\Omega\|_{H^{\sigma}} \|\bar U''_{\sim N}\|_{H^{\sigma}}\|\Delta_L\Delta_t^{-1}\Omega\|_{H^3}\\
\lesssim & \|\Omega\|_{H^{\sigma}} \|\bar U''\|_{H^{\sigma}}\left(\|\Omega\|_{H^3}+\frac{\epsilon\nu^{1/3}}{\la t\ra\la \nu t^3\ra}\|\partial_Yh\|_{H^3}\right),
\end{split}
\end{equation*}
thanks to Lemma \ref{lem3.3} in the last inequality. Similar treatments in the remainder source term $S^R$ below can be applied by using the decay of the $ \|\bar U''\|_{H^{\sigma}}$ in Proposition \ref{prop5.1} and the assumption \eqref{L1small}.

\underline{Remainder source term.} On the Fourier side,
\begin{equation}
\begin{split}
S^R= 2\pi\sum_{N\in\Bbb D}\sum_{\frac{N}{8}\leq N'\leq 8N}\sum_{k,l}\int_{\eta,\xi}  A^{\sigma}\overline{\widehat\Omega}_k(\eta) A^{\sigma}_k(\eta)\widehat{\bar U''}_l(\xi)_N\widehat{(\partial_X\Delta_L^{-1} \Delta_L\Delta_t^{-1}\Omega)}_{k-l}(\eta-\xi)_{N'} d\eta d\xi.
\end{split}
\end{equation}
On the support of the integrand, $|l,\xi|\approx |k-l,\eta-\xi|$, thus
$$A_k^{\sigma}\approx \la k,\eta\ra^{\sigma}\lesssim \la l,\xi\ra^{\sigma} +\la k-l,\eta-\xi\ra^{\sigma}\approx \la l,\xi\ra\la k-l,\eta-\xi\ra^{\sigma-1}\approx A_{k-l}^{\sigma}(\eta-\xi),$$
which implies that
$$|S^R|\lesssim \sum_{N\in\Bbb D}\|A^{\sigma}\Omega\|_{2}\|(\bar U'')_N\|_{H^3}\|\Delta_L\Delta_t^{-1}\Omega_{\neq,\sim N}\|_{H^{\sigma}},$$
thanks to $\displaystyle |\partial_X\Delta_L^{-1}|\sim \left|\frac{k}{k^2+|\eta-kt|^2}\right|  \lesssim1$. Therefore, from Lemma \ref{lem3.3} we have
\begin{equation}
\begin{split}
|S^R|\lesssim & \|A^{\sigma}\Omega\|_{2}\|\bar U''\|_{H^3}\left(\|\Omega_{\neq}\|_{H^{\sigma}}+\frac{\epsilon\nu^{1/3}}{\la t\ra\la \nu t^3\ra}\|\partial_Yh\|_{H^{\sigma}}\right)\\
\lesssim & \|\bar U''\|_{H^3}\|A^{\sigma}\Omega\|_{2}^2+ \|\bar U''\|_{H^3}\left(\frac{\epsilon\nu^{1/3}}{\la t\ra\la \nu t^3\ra}\|\partial_Yh\|_{H^{\sigma}}\right)^2 =:S^R_A+S^R_B.
\end{split}
\end{equation}
For the first part, we have
$$S^R_A\lesssim \frac{\epsilon\nu^{5/4}}{\la \nu t\ra^{5/4}}\|A^{\sigma}\Omega\|^2_{2}\lesssim \frac{\epsilon }{\la t\ra^{5/4}}\|A^{\sigma}\Omega\|^2_{2},$$
under the assumption \eqref{L1small}. After integration in time, we have
$$\int_1^tS^R_A\lesssim \int_1^t\frac{\epsilon }{\la \tau\ra^{5/4}}d\tau\cdot\sup_{\tau\in[1,t]} \|A^{\sigma}\Omega(\tau,\cdot)\|^2_{2}\leq C\epsilon\sup_{\tau\in[1,t]} \|A^{\sigma}\Omega(\tau,\cdot)\|^2_{2}.$$
For the second term, we have after integration in time
\begin{equation}
\begin{split}
\int_1^t|S^R_B(\tau)|d\tau\lesssim & \int_1^t\frac{\epsilon\nu^{5/4}}{\la \nu t\ra^{5/4}}\frac{\epsilon^2\nu^{2/3}}{\la t\ra^2\la \nu t^3\ra^2}\|\partial_Yh\|_{H^{\sigma}}^2d\tau\\
\lesssim & \epsilon^3 \nu^{\frac{11}{12}} \int_1^t\nu\|\partial_Yh\|^2_{H^{\sigma}}d\tau\\
\lesssim & \epsilon^3 \nu^{\frac{11}{12}} \cdot 8\epsilon(\epsilon\nu^{1/6})^2 \ \ \ \ \ \ \ (bootstrap\ hypotheses)\\
\lesssim & \epsilon^4\nu^{7/12}(8\epsilon\nu^{1/3})^2.
\end{split}
\end{equation}
In summary, we have
\begin{prop}
Assume \eqref{L1small}, then for any $t\geq1$, it holds
$$\int_1^tS(\tau)d\tau \leq C\epsilon\sup_{\tau\in[1,t]} \|A^{\sigma}\Omega(\tau,\cdot)\|^2_{2}+\epsilon^4\nu^{7/12}(8\epsilon\nu^{1/3})^2.$$
\end{prop}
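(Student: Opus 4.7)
The plan is to decompose the source term $S(t) = \int A^{\sigma}\Omega\,A^{\sigma}(\bar U''\partial_X\phi)\,dXdY$ via the Bony paraproduct into the three pieces $S^{LH}$, $S^{HL}$, and $S^{R}$, and then exploit the following three ingredients in combination: (i) the elliptic estimate of Lemma \ref{lem3.3}, which lets us replace $\Delta_t^{-1}\Omega_{\neq}$ (through the identity $\partial_X\phi = \partial_X\Delta_L^{-1}\cdot\Delta_L\Delta_t^{-1}\Omega_{\neq}$ and the boundedness $|\partial_X\Delta_L^{-1}|\lesssim 1$ for $k\neq 0$) by $\Omega_{\neq}$ plus a small error of size $\epsilon\nu^{1/3}\la t\ra^{-1}\la\nu t^3\ra^{-1}\|\partial_Y h\|_{H^{\sigma}}$; (ii) the fast decay of the background $\bar U''$ in every Sobolev norm, which by standard $L^1$-in–$L^{\infty}$ smoothing for the 1D heat equation together with the assumed smallness $\|U(y)-y\|_{L^1}\leq \epsilon\nu^{5/4}$ yields a bound of the form $\|\bar U''\|_{H^{\sigma}}\lesssim \epsilon\nu^{5/4}\la\nu t\ra^{-5/4}$; and (iii) the $H^{\sigma}$ smallness of $\bar U''$ directly, coming from $\|U(y)-y\|_{H^{\sigma}}\leq \epsilon\nu^{1/3}$.

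For $S^{LH}$ the frequency localization forces $|k-l,\eta-\xi|\ll |l,\xi|$, so $A_k^{\sigma}(\eta)\approx A_l^{\sigma}(\xi)$ and the multiplier lands on the high-frequency factor $\Omega_l$. Bounding the low-frequency factor $\bar U''_{<N/8}$ in $H^3$ and applying Lemma \ref{lem3.3} gives
\[
|S^{LH}|\lesssim \|A^{\sigma}\Omega\|_2\,\|\bar U''\|_{H^{\sigma}}\Big(\|\Omega_{\neq}\|_{H^{\sigma}}+\tfrac{\epsilon\nu^{1/3}}{\la t\ra\la\nu t^3\ra}\|\partial_Yh\|_{H^{\sigma}}\Big).
\]
For $S^{HL}$ the roles are reversed, but $|k,\eta|\sim|k-l,\eta-\xi|\sim N$, so the $A^{\sigma}$ can again be put on the high-frequency factor, now $\bar U''_N$; the low-frequency $\Delta_L\Delta_t^{-1}\Omega$ is controlled in $H^3$. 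For $S^{R}$ the standard identity $\la k,\eta\ra^{\sigma}\lesssim \la l,\xi\ra^{\sigma}+\la k-l,\eta-\xi\ra^{\sigma}$ on the comparable-frequency region again lets the multiplier fall on the high-frequency factor, so the same elliptic estimate applies. The three pieces are thus schematically of the same form as the displayed bound for $S^R$ in the excerpt.

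With the bounds in hand, the integration in time splits into two contributions. The \emph{principal} term is of the form $\int_1^t\|\bar U''\|_{H^3}\|A^{\sigma}\Omega\|_2^2\,d\tau$, and here I would plug in the decay estimate (ii), giving $\|\bar U''\|_{H^3}\lesssim \epsilon\nu^{5/4}\la\nu\tau\ra^{-5/4}$, so that after the change of variable $s=\nu\tau$ one gets $\int_1^\infty \epsilon\la\tau\ra^{-5/4}\,d\tau<\infty$ and therefore this contribution is bounded by $C\epsilon\sup_{\tau\in[1,t]}\|A^{\sigma}\Omega(\tau)\|_2^2$. The \emph{residual} term is of the form $\int_1^t\|\bar U''\|_{H^3}\tfrac{(\epsilon\nu^{1/3})^2}{\la t\ra^2\la\nu t^3\ra^2}\|\partial_Yh\|_{H^{\sigma}}^2\,d\tau$; here I would pull out the slowly varying prefactor $\epsilon\nu^{5/4}\la\nu\tau\ra^{-5/4}$ as $\lesssim \epsilon^3\nu^{11/12}$ and then absorb $\int_1^t\nu\|\partial_Yh\|_{H^{\sigma}}^2\,d\tau\lesssim \epsilon^3\nu^{1/3}$ from the bootstrap hypothesis \eqref{BS2.2}. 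Combining the two contributions produces precisely the asserted bound $C\epsilon\sup_{[1,t]}\|A^{\sigma}\Omega\|_2^2 + \epsilon^4\nu^{7/12}(8\epsilon\nu^{1/3})^2$.

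The main obstacle is the $S^R_A$ term, i.e., the ``big'' contribution $\|\bar U''\|_{H^3}\|A^{\sigma}\Omega\|_2^2$: one cannot absorb it into the dissipation, and the only way to close the estimate is to show that this factor is \emph{time-integrable}. This is precisely the reason the extra structural assumption $\|U(y)-y\|_{L^1}\leq \epsilon\nu^{5/4}$ is imposed in the theorem; without it, even though $\bar U''$ is small in $H^{\sigma}$, it would not decay fast enough in time for $\int_1^{\infty}\|\bar U''\|_{H^3}\,d\tau$ to be $\lesssim \epsilon$, and the argument would fail. Once this integrability is established via the 1D heat semigroup estimate $\|\bar U''\|_{H^{\sigma}}\lesssim \|U(y)-y\|_{L^1}(\nu t)^{-5/4}$, the rest is elementary bookkeeping of Sobolev norms and paraproducts as sketched above.
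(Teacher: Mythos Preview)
Your proposal is correct and follows essentially the same route as the paper: the same Bony decomposition into $S^{LH}$, $S^{HL}$, $S^{R}$, the same appeal to Lemma~\ref{lem3.3} via $\partial_X\phi=\partial_X\Delta_L^{-1}\cdot\Delta_L\Delta_t^{-1}\Omega_{\neq}$, and the same splitting into a principal piece $\|\bar U''\|_{H^3}\|A^{\sigma}\Omega\|_2^2$ (time-integrable thanks to the $L^1$ assumption and the heat-semigroup decay $\|\bar U''\|_{H^{\sigma}}\lesssim \epsilon\nu^{5/4}\la\nu t\ra^{-5/4}$) and a residual piece closed by the bootstrap bound on $\nu\int_1^t\|\partial_Yh\|_{H^{\sigma}}^2$. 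The only slip is a harmless bookkeeping typo: the bootstrap \eqref{BS2.2} gives $\int_1^t\nu\|\partial_Yh\|_{H^{\sigma}}^2\lesssim \epsilon^2\nu^{1/3}$, not $\epsilon^3\nu^{1/3}$, but the resulting error term is still comfortably below the stated threshold.
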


\section{Coordinate system estimates}\label{Coordinate}
\setcounter{equation}{0}

In this section, we will estimate $g$, $h$ and $\bar h$ in appropriate Sobolev spaces. They satisfy the equations in \eqref{gg}, \eqref{hh} and \eqref{barh}.

\underline{Estimate of $g$ in $H^{\sigma}$.}
Recall that as a function of $(t,Y)$, $g$ evolves according to the following equation
\begin{equation}
\begin{split}
g_t+\frac{2g}t+g\partial_Yg=-\frac{(\bar U'+\partial_y\psi)}t\langle\nabla^{\bot}_{X,Y}\phi_{\ne}\cdot\nabla_{X,Y}V^X\rangle+  \nu\widetilde {\Delta_t}g,
\end{split}
\end{equation}
where $$\widetilde {\Delta_t}g=(\bar U'+\partial_y\psi)^2\partial_{YY}g.$$
We compute the evolution of the norm $\|g\|^2_{H^{\sigma}}$. We have
\begin{equation}\label{e7.1}
\begin{split}
\frac{d}{dt}& \|\la\partial_Y\ra^{\sigma}g\|^2_2= -\frac4t\|\la\partial_Y\ra^{\sigma}g\|^2_2 -2\int \la\partial_Y\ra^{\sigma}g \la\partial_Y\ra^{\sigma}(g\partial_Yg)dY\\
& -\frac2t \int\la\partial_Y\ra^{\sigma}g\la\partial_Y\ra^{\sigma}\left((\bar U'+\partial_y\psi)\langle\nabla^{\bot}_{X,Y}\phi_{\ne}\cdot\nabla_{X,Y}V^X\rangle\right)dY +2\nu\int \la\partial_Y\ra^{\sigma}g\la\partial_Y\ra^{\sigma}(\widetilde{\Delta_t}g)dY\\
= & -\frac4t\|\la\partial_Y\ra^{\sigma}g\|^2_2 +V_1^{H,g} +V_2^{H,g} +V_3^{H,g}.
\end{split}
\end{equation}
For the three terms, it can be estimated that
\begin{equation*}
\begin{split}
|V_1^{H,g}|\lesssim   \left|\int \la\partial_Y\ra^{\sigma}g \la\partial_Y\ra^{\sigma}(g\partial_Yg)dY\right| \lesssim  \|g\|_{H^2}\|\la\partial_Y\ra^{\sigma}g\|^2_2.
\end{split}
\end{equation*}
For the second term, we note that
$$\langle\nabla^{\bot}_{X,Y}\phi_{\ne}\cdot\nabla_{X,Y}V^X\rangle =\langle\nabla^{\bot}_{L}\phi_{\ne}\cdot\nabla_{L}V^X_{\ne}\rangle,$$
which implies that 
\begin{equation*}
\begin{split}
|V_2^{H,g}|\lesssim & \frac1t \|\la\partial_Y\ra^{\sigma}g\|_2\left(\|\bar U'-1\|_{H^{\sigma}}+\|h\|_{H^{\sigma}}\right)\| \langle\nabla^{\bot}_{X,Y}\phi_{\ne}\cdot\nabla_{X,Y}V^X\rangle\|_{H^{\sigma}} \\
 & +\frac1t\|\la\partial_Y\ra^{\sigma}g\|_2 \langle\nabla^{\bot}_{L}\phi_{\ne}\cdot\nabla_{L}V^X\rangle\|_{H^{\sigma}}\\
\lesssim & \frac1t \|\la\partial_Y\ra^{\sigma}g\|_2\bigg(\|\nabla^{\bot}_{L}\phi_{\ne}\|_{H^{\sigma}} \|\nabla_{L}V^X_{\ne}\|_{H^{2}} + \|\nabla^{\bot}_{L}\phi_{\ne}\|_{H^{2}} \|\nabla_{L}V^X_{\ne}\|_{H^{\sigma}}\bigg),
\end{split}
\end{equation*}
thanks to the bootstrap assumption and the assumption \eqref{BS2.2} that $\|\bar U'-1\|_{H^{\sigma}}\lesssim \delta$ in \eqref{e25}. 

For the third term, we have by integration by parts
\begin{equation*}
\begin{split}
V_3^{H,g}= & 2\nu\int \la\partial_Y\ra^{\sigma}g\la\partial_Y\ra^{\sigma}(\partial_Y^2g)dY +2\nu\int \la\partial_Y\ra^{\sigma}g\la\partial_Y\ra^{\sigma}\left(\left(\left(\bar U'+\partial_y\psi\right)^2-1\right)\partial_{YY}g\right)\\
\lesssim & -2\nu\|\partial_Y\la\partial_Y\ra^{\sigma}g\|_2^2 + 2\nu\int \la\partial_Y\ra^{\sigma}g\la\partial_Y\ra^{\sigma}\left(\left((\bar U'+1)(\bar U'-1)+(h+2\bar U')h\right)\partial_{YY}g\right)\\
\lesssim & -2\nu\|\partial_Y\la\partial_Y\ra^{\sigma}g\|_2^2 + \nu\left(1+\|h\|_{H^2}+\|\bar U'\|_{H^2}\right) \Big(\|\partial_Yg\|^2_{H^{\sigma}}(\|h\|_{H^2}+\|\bar U'-1\|_{H^2}) \\
&  +(\|h\|_{H^{\sigma-1}}+\|\bar U'-1\|_{H^{\sigma-1}})\|\partial_Yg\|^2_{H^{\sigma}}  +(\|\partial_Yh\|_{H^{\sigma-2}}+\|\partial_Y\bar U'\|_{H^{\sigma-2}})\|g\|^2_{H^{4}}\Big)\\
\lesssim  & -\nu\|\partial_Y\la\partial_Y\ra^{\sigma}g\|_2^2 + \nu\left(1+\|h\|_{H^2}+\|\bar U'\|_{H^2}\right) \Big(\left(\|\partial_Yh\|_{H^{\sigma-2}}+\|\partial_Y\bar U'\|_{H^{\sigma-2}}\right)\|g\|^2_{H^{4}}\Big),
\end{split}
\end{equation*}
thanks to $\|\bar U'-1\|_{H^{\sigma}}\lesssim \delta$ (see \eqref{e25}). Integrating \eqref{e7.1}, and using Lemma \ref{lem3.3}, we have 
\begin{equation*}
\begin{split}
\sup_{\tau\in[1,t]}\bigg(\tau\|g(\tau)\|_{H^{\sigma}}\bigg) +\int_1^t\|g(\tau)\|_{H^{\sigma}}d\tau \leq \|g(1)\|_{H^{\sigma}} +C\left(\epsilon\nu^{1/3}\sup_{\tau\in[1,t]}\tau\|g(\tau)\|_{H^{\sigma}} +\epsilon^2\nu^{1/3}\right).
\end{split}
\end{equation*}
By taking $\epsilon$ small enough, we get 
\begin{equation*}
\begin{split}
\la t\ra\|g\|_{H^{\sigma}}  +\int_1^t\|g(\tau)\|_{H^{\sigma}}d\tau \leq 2\|g(1)\|_{H^{\sigma}} +C\epsilon^2\nu^{1/3}.
\end{split}
\end{equation*}

\underline{Estimate of $h$ and $\bar h$ in $H^{\sigma-1}$.}
Note that
\begin{equation}
\begin{split}
\bar h_t+\frac2t\bar h+g\partial_Y\bar h=\frac{(\bar U'+\partial_y\psi)}t\langle\nabla^{\bot}_{X,Y}\phi_{\ne}\cdot\nabla_{X,Y}\Omega\rangle+  \nu\widetilde {\Delta_t}\bar h.
\end{split}
\end{equation}
By taking inner product with $\bar h$ in $H^{\sigma-1}$, we obtain
\begin{equation*}
\begin{split}
\frac12\frac{d}{dt}\|\bar h\|_{H^{\sigma-1}}^2=   & -\frac2t\|\bar h\|_{H^{\sigma-1}}^2 - \int\la\partial_Y\ra^{\sigma-1}\bar h \la\partial_Y\ra^{\sigma-1}(g\partial_Y\bar h)dY \\
& + \frac1t \int \la\partial_Y\ra^{\sigma-1}\bar h \la\partial_Y\ra^{\sigma-1}\Big({(\bar U'+\partial_y\psi)}\langle\nabla^{\bot}_{X,Y}\phi_{\ne}\cdot\nabla_{X,Y}\Omega\rangle\Big)dY\\
& +  \nu\int \la\partial_Y\ra^{\sigma-1}\bar h \la\partial_Y\ra^{\sigma-1} \Big(\big((\bar U'+\partial_y\psi)^2-1\big)\partial_{YY}\bar h\Big)dY     -\nu\|\partial_{Y}\bar h\|_{H^{\sigma-1}}^2\\
= & -\frac2t\|\bar h\|_{H^{\sigma-1}}^2  -\nu\|\partial_{Y}\bar h\|_{H^{\sigma-1}}^2 +\sum_{i=1}^3V_i^{H,\bar h},
\end{split}
\end{equation*}
where $$\widetilde {\Delta_t}F=(\bar U'+\partial_y\psi)^2\partial_{YY}F.$$

For the term $V_{3,\epsilon}^{H,\bar h}$, we estimate
\begin{equation*}
\begin{split}
|V_{3,\epsilon}^{H,\bar h}|\lesssim & \nu\int_{\xi,\eta}\la\eta\ra^{2(\sigma-1)}|\hat{\bar h}(\eta)|\left|(\widehat{1-(\bar U'+\partial_y\psi)^2}(\eta-\xi))|\xi|^2|\hat{\bar h}(\xi)|\right|d\xi d\eta\\
\lesssim &\nu \|(1-(\bar U'+\partial_y\psi)^2)\|_{H^2}\|\partial_Y\bar h\|_{H^{\sigma-1}}^2 +\|(1-(\bar U'+\partial_y\psi)^2)\|_{H^{\sigma-1}}\|\bar h\|_{H^{\sigma-1}} \|\bar h\|_{H^{4}}\\
\lesssim & \nu (1+\|h\|_{H^2})\left(\|h\|_{H^2}\|\partial_Y\bar h\|_{H^{\sigma-1}}^2 +\|\partial_Yh\|_{H^{\sigma-2}}\|\bar h\|_{H^{\sigma-1}} \|\bar h\|_{H^{4}}\right),
\end{split}
\end{equation*}
where the first part of sum can be bounded by $\frac{\nu}{2}\|\partial_Y\bar h\|_{H^{\sigma-1}}^2$ by the a priori assumption \eqref{BS2.2}. Using these estimates, we thus obtain
\begin{equation*}
\begin{split}
\frac12\frac{d}{dt}\|\bar h\|_{H^{\sigma-1}}^2\leq  & -\frac2t\|\bar h\|_{H^{\sigma-1}}^2  +C\|g\|_{H^{\sigma-1}}\|\bar h\|_{H^{\sigma-1}}^2 \\
& +\frac Ct \|\bar h\|_{H^{\sigma-1}}(1+\|h\|_{H^2})\Big(\|\phi_{\neq}\|_{H^{\sigma}}\|\partial_Xf_{\neq}\|_{H^1} +\|\partial_X\phi_{\neq}\|_{H^{1}}\|f_{\neq}\|_{H^{\sigma}}\Big)\\
& +\frac Ct \|\bar h\|_{H^{\sigma-1}}\|h\|_{H^{\sigma-1}}\|\partial_X\phi_{\neq}\|_{H^{2}}\|f_{\neq}\|_{H^{2}}  +C\nu \|\partial_Yh\|_{H^{\sigma-2}}\|\bar h\|_{H^{\sigma-1}} \|\bar h\|_{H^{4}},
\end{split}
\end{equation*}
and hence
\begin{equation*}
\begin{split}
\frac12\frac{d}{dt}(t\|\bar h\|_{H^{\sigma-1}}) \leq  & -\|\bar h\|_{H^{\sigma-1}} \leq  +Ct\|g\|_{H^{\sigma-1}}\|\bar h\|_{H^{\sigma-1}} \\
& +C(\|\phi_{\neq}\|_{H^{\sigma}}\|\partial_Xf_{\neq}\|_{H^1} +\|\partial_X\phi_{\neq}\|_{H^{2}}\|f_{\neq}\|_{H^{\sigma}})  +Ct\nu \|\partial_Yh\|_{H^{\sigma-2}}\|\bar h\|_{H^{4}}.
\end{split}
\end{equation*}
This, thanks to the a priori assumption, implies that
\begin{equation*}
\begin{split}
\sup_{t\in[1,T]}(t\|\bar h\|_{H^{\sigma-1}}) +\int_1^T\|\bar h(\tau)\|_{H^{\sigma-1}}d\tau  \leq \|\bar h(1)\|_{H^{\sigma-1}} +C\left(\epsilon\nu^{1/3}\sup_{t\in[1,T]}(t\|\bar h\|_{H^{\sigma-1}})  +\epsilon^2\nu^{1/3} +\epsilon^2\nu^{7/6}\right).
\end{split}
\end{equation*}
Similarly, we have
\begin{equation*}
\begin{split}
\sup_{t\in[1,T]}\|h(t)\|_{H^{\sigma-1}}^2 +\nu\int_1^T\|h(\tau)\|_{H^{\sigma-1}}^2d\tau  \leq \|h(1)\|_{H^{\sigma-1}}^2 +4\|\bar h\|_{L^1_TH^{\sigma-1}}^2  +\frac38\sup_{t\in[1,T]}\|h(t)\|_{H^{\sigma-1}}^2.
\end{split}
\end{equation*}

\underline{Estimate of $h$ and $\bar h$ in $H^{\sigma}$.}
Recall 
\begin{equation}
\begin{split}
\bar h_t+\frac2t\bar h+g\partial_Y\bar h=\frac{(\bar U'+\partial_y\psi)}t\langle\nabla^{\bot}_{X,Y}\phi_{\ne}\cdot\nabla_{X,Y}\Omega\rangle+  \nu\widetilde {\Delta_t}\bar h.
\end{split}
\end{equation}
Since $\bar U'$ and $\partial_y\psi$ are independent of $k$, we have by applying $A^{\sigma}$ and then taking inner product that
\begin{equation}\label{S9.2}
\begin{split}
\frac12\frac{d}{dt}\|A^{\sigma}\bar h\|_2^2  =   &  - \int\frac{\partial_tw(t,\eta)}{w(t,\eta)}\left| \frac{\la\eta\ra^{\sigma}\widehat{\bar h}(t,\eta)}{w(t,\eta)}\right|^2d\eta    -\frac2t\|A^{\sigma}\bar h\|_2^2 \\
&    -  \int A^{\sigma}\bar h [A^{\sigma}(g\partial_Y\bar h)  - g\partial_YA^{\sigma}\bar h ] dY    + \frac12  \int \partial_Yg|A^{\sigma}\bar h|^2dY    \\
&  + \frac1t  \int A_0^{\sigma}\bar hA_0^{\sigma} \left({\left(1+(\bar U'-1+\partial_y\psi)\right)}\langle\nabla^{\bot}_{X,Y}\phi_{\ne}\cdot\nabla_{X,Y}\Omega\rangle \right) dY \\
&  -\nu\|\partial_YA^{\sigma} \bar h\|_2^2    +\nu\int  A^{\sigma} \bar h  A^{\sigma} \left( ((\bar U'+\partial_y\psi)^2-1) \partial_{YY}\bar h\right).
\end{split}
\end{equation}
By Littlewood-Paley decomposition in $Y$ and recalling \eqref{lem6.1}, we have for the third term on the RHS
\begin{equation}
\begin{split}
\Bigg| \int A^{\sigma}\bar h [A^{\sigma}(g\partial_Y\bar h)  &   - g\partial_YA^{\sigma}\bar h ] dY\Bigg|\lesssim     \|A^{\sigma}\bar h\|_2^2\|g\|_{H^{\sigma}}  \\
&  +\|A^{\sigma}\bar h\|_2\|\bar h\|_{H^{\sigma}}\|g\|_{H^3} \left(\nu^{-1/3}\chi_{t\lesssim \nu^{-1/3}}(t)  +\nu^{\beta/3}t^{1-\beta}\chi_{t\gtrsim \nu^{-1/3}}(t)\right).
 \end{split}
\end{equation}
The main contribution of the fifth term comes from the ``1" part, and we have
\begin{equation}
\begin{split}
\Bigg| \frac1t  \int    &    A_0^{\sigma}\bar hA_0^{\sigma}   \langle\nabla^{\bot}_{X,Y}\phi_{\ne}\cdot\nabla_{X,Y}\Omega\rangle  dY \Bigg|   \lesssim    \frac{1}{t^3}\|\bar h\|_{H^{\sigma}} \|\Omega_{\ne}\|_{H^{\sigma}}  \|\Omega_{\ne}\|_{H^7} \\ 
&  + \frac1t \|\sqrt{\frac{\partial w}{w}}A^{\sigma}\bar h\|_{L^2}  \|\sqrt{\frac{\partial w}{w}}\chi_RA^{\sigma}\Delta_L\Delta_t^{-1}\Omega_{\ne}\|_{L^2}  \|\nu^{-1/3}\la \nu^{1/3}t\ra^{1+\beta}\|_{H^6}  \\
&  + \frac{1}{t^2}\|\bar h\|_{H^{\sigma}} \|\sqrt{-\Delta_L}A^{\sigma}\Omega_{\ne}\|_{L^2}    \|\Omega_{\ne}\|_{H^4}     + \frac{1}{t}\|A^{\sigma}\bar h\|_{L^2} \|A^{\sigma}\Delta_L{\Delta_t^{-1}}\Omega_{\ne}\|_{L^2}    \|\Omega_{\ne}\|_{H^4}.
 \end{split}
\end{equation}
The residual part corresponding to $(\bar U'-1+\partial_y\psi)$ in the fifth term can be controlled similarly with an additional power of $\epsilon$. The last dissipation error term can be controlled
\begin{equation}
\begin{split}
\Bigg|  \nu\int  & A^{\sigma} \bar h  A^{\sigma} \left( ((\bar U'+\partial_y\psi)^2-1) \partial_{YY}\bar h\right)\Bigg|  \\
\lesssim    &    \nu\|\bar h\|_{L^2} \|\bar h\|_{H^2} \|\bar U'+h\|_{H^2}    +\nu \|\bar U'+h\|_{H^3}\|\partial_YA^{\sigma}\bar h\|_{L^2}^2   +\nu \|\bar U'+h\|_{H^{\sigma-1}}\|\partial_Y\bar h\|_{H^{\sigma}}   \|\partial_Y\bar h\|_{H^{3}}  \\
\leq  &     \frac14\|\partial_YA^{\sigma}\bar h\|_{L^2}^2   +C \nu\|\bar h\|_{L^2} \|\bar h\|_{H^2} \|\bar U'+h\|_{H^2},
\end{split}
\end{equation}
thanks to bootstrap hypotheses.  Inserting these  estimates into \eqref{S9.2}, multiplying by $t^3$, and integrating over $[1,t]$, we obtain
\begin{equation}
\begin{split}
t^3\|A^{\sigma}\bar h(t)\|_2^2   &  +\int_1^t\tau^3CK_{w}^{\bar h}d\tau +\frac12\int_1^t \tau^2\|A^{\sigma}\bar h(\tau)\|_2^2d\tau   +\frac{\nu}{2}\int_1^t \tau^3\|\partial_YA^{\sigma}\bar h(\tau)\|_2^2d\tau \\
\leq  &   \|A^{\sigma}\bar h(t)\|_2^2   +\frac1{100}\sup_{\tau\in [1,t]} \tau^3\|A^{\sigma}\bar h(\tau)\|_2^2  +\frac1{100}\int_1^t\tau^3CK_{w}^{\bar h}d\tau   +C\epsilon^4\nu^{1/3}.
\end{split}
\end{equation}
Similar estimate holds for $h$ in $H^{\sigma}$, and we have
\begin{equation}
\begin{split}
\|h(t)\|_{H^{\sigma}}^2   +  {\nu}\int_1^t \|\partial_Yh(\tau)\|_{H^{\sigma}}^2d\tau \leq  \|h(1)\|_{H^{\sigma}}^2      + \frac1{100}\sup_{\tau\in [1,t]} \|h(t)\|_{H^{\sigma}}^2  +C\epsilon^3\nu^{1/3}.
\end{split}
\end{equation}

\underline{Lower energy estimate of $h$ and $\bar h$ in $H^{\sigma-6}$.}
By applying $\la \partial_Y\ra^{\sigma-6}$ to the equation \eqref{gg} and \eqref{barh} and then taking inner product with $t^4\la \partial_Y\ra^{\sigma-6}g$ and  $t^4\la \partial_Y\ra^{\sigma-6}\bar h$, respectively, we can show under bootstrap hypotheses that
\begin{equation}
\begin{split}
\sup_{\tau\in [1,t]}  &  \tau^4\|g(\tau),\bar h(\tau)\|_{H^{\sigma-6}}^2     +\nu\int_1^t  \tau^4\|\partial_Yg(\tau),\partial_Y\bar h\|_{H^{\sigma-6}}^2  d\tau  \\
\leq  &   \|g(1),\bar h(1)\|_{H^{\sigma-6}}^2    +C\epsilon\nu^{1/3}\sup_{\tau\in [1,t]}\tau^4\|g(\tau),\bar h(\tau)\|_{H^{\sigma-6}}^2    + C\epsilon^2\nu^{2/3}\sup_{\tau\in [1,t]}\tau^2\|g(\tau),\bar h(\tau)\|_{H^{\sigma-6}}.
\end{split}
\end{equation}
By taking $\epsilon$ small enough, the first two bootstrap hypotheses \eqref{e2.25} on Lower regularity can be proved.

\section{Decay estimate of vorticity}\label{DecayVorticity}
\setcounter{equation}{0}

\underline{Decay estimate of nonzero mode: Enhanced dissipation.} 
Consider only $t$ such that $\nu t^3\geq1$. Recall that
$$\|A_E^{s}\Omega\|^2_2=\sum_{k\neq0}\int_{\eta} \la k,\eta\ra^{2s}\left|D(t,\eta)\hat\Omega_k(t,\eta)\right|^2d\eta,$$
where $D(t,\eta)=\frac{1}{3}\nu|\eta|^3+ \frac{1}{24}\nu(t^3-8|\eta|^3)_+.$

By direct computation, we have
\begin{equation}\label{e10.1}
\begin{split}
\frac12\frac{d}{dt}\|A_E^{s}\Omega\|^2_2 = & \sum_{k\neq0}\int_{\eta} \frac{\partial_tD(t,\eta)}{D(t,\eta)}\left|A_E^s\hat\Omega_k(t,\eta)\right|^2d\eta  +  \nu\int A_E^{s}\Omega A_E^{s} \widetilde {\Delta_t}\Omega   dYdX  \\ 
&- \int A_E^{s}\Omega A_E^{s}(u(t,X,Y)\cdot\nabla_{X,Y}\Omega)dYdX  + \int A_E^{s}\Omega A_E^{s}  (\bar U''\partial_X\phi)   dYdX=\sum_{i=1}^4 E_i.
\end{split}
\end{equation}
It is noted that by definition of $D(t,\eta)$,  $E_1\leq 3\nu t^2\|1_{t\geq{2\eta}}A_E^s\hat\Omega_k(t,\eta)\|^2_2$. For the dissipation term $E_2$,  similar to that of \cite[\S10]{MZ19}, we have
\begin{equation*}
\begin{split}
E_2 = -\nu\left\| (-\Delta_L)^{1/2} A_E^{s}\Omega \right\|_2^2  \underbrace{- \nu\int A_E^{s}\Omega A_E^{s} \left(\left((\bar U'+\partial_y\psi)^2-1\right)\partial_{YY}^L\right) \Omega   dYdX}_{E^{\nu}}.
\end{split}
\end{equation*}
Thanks to the fact that $|\xi-kt|\leq |\xi-\eta|+|\eta-kt|\leq \la \xi-\eta\ra \sqrt{k^2+|\eta-kt|^2}$, we have for $E^{\nu}$ that
\begin{equation*}
\begin{split}
|E^{\nu}| & \lesssim \nu(1+\|h\|_{H^2}+\|\bar U'-1\|_{H^2})(\|h\|_{H^6}+\|\bar U'-1\|_{H^6}) \left\| \sqrt{-\Delta_L}A^s_E\Omega\right\|_2^2\\
& \lesssim \nu \epsilon\nu^{1/3} \left\| \sqrt{-\Delta_L}A^s_E\Omega\right\|_2^2.
\end{split}
\end{equation*}
Therefore, 
\begin{equation*}
\begin{split}
E_1+E_2\leq  -\frac18\nu\|\sqrt{-\Delta_L}A^s_E\Omega\|_2^2  + C \nu \epsilon\nu^{1/3} \left\| \sqrt{-\Delta_L}A^s_E\Omega\right\|_2^2.
\end{split}
\end{equation*}

For the convective nonlinear term $E_3$, we have 
\begin{equation*}
\begin{split}
E_3 = & -\int A_E^{s}\Omega A_E^{s}(g\partial_{Y}\Omega)dYdX -\int A_E^{s}\Omega A_E^{s}\left(((\bar U'+\partial_y\psi) \nabla^{\bot}_{X,Y}\phi_{\ne})\cdot\nabla\Omega\right)dYdX\\
= & :E_{31}+E_{32}  \ \ \ \ \ \ \ \  (using\ \eqref{e2.18}_3)
\end{split}
\end{equation*}
and by the same arguments, we have 
\begin{equation*}
\begin{split}
E_{31}+E_{32} \lesssim \frac{\epsilon\nu^{1/3}}{\la t\ra^2} (\|A^s_E\Omega\|^2_2 +\|A^{\sigma}\Omega\|_2\|A^s_E\Omega\|_2).
\end{split}
\end{equation*}

For $E_4$, which is an extra term compared to \cite{MZ19}, we have by Young's convolution inequality that
\begin{equation}\label{eE4}
\begin{split}
E_4 = & \int A_E^{s}\Omega A_E^{s}  (\bar U''\partial_X\phi)   dYdX\\
= & \sum_{k\neq 0}\int_{\eta,\xi}  A^s_E(k,\xi)\widehat{\bar U''}(\xi-\eta)\frac{ik}{k^2+|\eta-k t|^2}\widehat{(\Delta_L\Delta_t^{-1}\Omega_{\neq})}(k,\eta) A^s_E(k,\xi)\bar{\hat{\Omega_{k}}}(\xi) d\eta d\xi\\
\lesssim & \sum_{k\neq 0}\int_{\eta,\xi}  \la\xi-\eta\ra^{s+3}\widehat{\bar U''}(\xi-\eta)A^s_E(k,\eta)\widehat{(\Delta_L\Delta_t^{-1}\Omega_{\neq})}(k,\eta) A^s_E(k,\xi)\bar{\hat{\Omega_{k}}}(\xi) d\eta d\xi\\
\lesssim &  \|\bar U''\|_{H^{s+4}}\|A^s_E(k,\eta)\widehat{(\Delta_L\Delta_t^{-1}\Omega_{\neq})}(k,\eta)\|_2\|A^s_E\Omega_{\neq}\|_2.
\end{split}
\end{equation}
Since 
\begin{equation}\label{e4.1-1}
\Delta_L\phi_{\neq} =\Omega_{\neq} +(1-(\bar U'+\partial_y\phi)^2)\partial_{YY}^L\phi_{\neq}  -(\bar U''+\partial_{yy}\psi)\partial_Y^L\phi_{\neq}.
\end{equation}
Now, we consider $\|A^s_E\left(1-(\bar U'+\partial_y\phi)^2\partial_{YY}^L\phi_{\neq}\right)\|_2$. According to Lemma \ref{lem4.5}, we have
\begin{equation}
\begin{split}
\|A^s_E& ((1-(\bar U' +\partial_y\phi)^2)\partial_{YY}^L\phi_{\neq})\| \\
\lesssim & \nu\|(1-(\bar U'+\partial_y\phi)^2)\partial_{YY}^L\phi_{\neq}\|_{H^{s+3}} +\nu t^3\|(1-(\bar U'+\partial_y\phi)^2)\partial_{YY}^L\phi_{\neq}\|_{H^s}\\
\lesssim & \nu\|(\bar U'-1)+\partial_y\psi\|_{H^{s+3}}\|1+\bar U'+\partial_y\psi\|_{H^{s+3}} \|\Delta_L\phi_{\neq}\|_{H^{s+3}}\\
& +  \|(\bar U'-1)+\partial_y\psi\|_{H^{s}}\|1+\bar U'+\partial_y\psi\|_{H^{s}} (\nu t^3 \|\Delta_L\phi_{\neq}\|_{H^{s}})\\
\lesssim & \epsilon\nu^{7/6} \|\Delta_L\phi_{\neq}\|_{H^{s+3}} +\epsilon\nu^{1/6} \|A^s_E\Delta_L\phi_{\neq}\|_{L^2}\\
\lesssim & \epsilon\nu^{7/6} \|\Omega_{\neq}\|_{H^{s+3}} +\epsilon\nu^{1/6} \|A^s_E\Delta_L\phi_{\neq}\|_{L^2}\\
\lesssim & \epsilon^3\nu^{11/6} +\epsilon\nu^{1/6} \|A^s_E\Delta_L\phi_{\neq}\|_{L^2},
\end{split}
\end{equation}
thanks to Lemma \ref{lem3.1} and the bootstrap assumption \eqref{e2.22} in the last two inequalities. In the same manner, if we apply $A^s_E$ to \eqref{e4.1-1}, and take the $L^2$-norm, we obtain
\begin{equation}
\|A^s_E\Delta_L\phi_{\neq}\|  \leq  \|A^s_E\Omega_{\neq}\| + C\epsilon\nu^{1/6} \|A^s_E\Delta_L\phi_{\neq}\|_{L^2}  +C\epsilon^3\nu^{11/6}.
\end{equation}
Taking $\epsilon$ suitably small,  this gives that 
\begin{equation}
\|A^s_E\Delta_L\phi_{\neq}\|  \leq 2\|A^s_E\Omega_{\neq}\| +C\epsilon^3\nu^{11/6}.
\end{equation}
Integrating \eqref{e10.1} gives that
\begin{equation}
\begin{split}
\|A_E^{s}\Omega\|^2_2& +\frac15\nu\int_1^t\|\sqrt{-\Delta_L}A^s_E\Omega\|_2^2d\tau  \leq  \|A_E^{s}\Omega(1)\|^2_2  \\
 &+ C \epsilon\nu^{1/3} \left\| A^s_E\Omega(t)\right\|_2^2 +\left(\int_1^t \|\bar U''\|_{H^{s+4}}d\tau\right) \|A^s_E\Omega_{\neq}(t)\|_2^2  +C\epsilon^3\nu\\
\leq & \|A_E^{s}\Omega(1)\|^2_2   + C \epsilon  \left\| A^s_E\Omega(t)\right\|_2^2 +C\epsilon^3\nu,
\end{split}
\end{equation}
thanks to the decay of $\|\bar U''\|$ in Proposition \ref{prop5.1} and the assumption that $\|U(y)-y\|_{L^1}\leq \epsilon\nu^{5/4}$ there.

\underline{Decay estimate of zero mode.} 
In this part, we consider the decay of the zero mode of $\Omega$. The zero mode $\Omega_0$ satisfies
$$\partial_t\Omega_0 +g\partial_{Y}\Omega_0 +(\bar U'+\partial_y\psi)\la\nabla^{\bot}_{X,Y}\phi_{\ne}\cdot\nabla_{X,Y}\Omega\ra = \underbrace{\bar U''\la\partial_X\phi\ra}_{=0} +\nu(\bar U'+\partial_y\psi)^2\partial_{YY}\Omega_0,$$
exactly the same equation as the case when the background shear flow is $U(y)=y$. Therefore, by the same estimate as in \cite{MZ19}, we have
\begin{equation}\label{e2.36}
\begin{split}
\sup_{\tau\in [1,t]}  &   \left(\|\Omega_0(\tau)\|^2_{H^{s}} +\frac{\tau\nu}{2}\|\partial_Y\Omega_0\|^2_{H^{s}}\right)     +\nu\int_1^t  \Big(\|\partial_Y\Omega_0(\tau)\|^2_{H^{s}}  +\frac{\tau\nu}{2}\|\partial_Y\Omega_0(\tau)\|^2_{H^{s}}\Big) d\tau\\
 \leq  &  \left(2\|\Omega_0(1)\|^2_{H^{s}} +\nu\|\partial_Y\Omega_0(1)\|^2_{H^{s}}\right) + C\epsilon^3\nu^{2/3}.
\end{split}
\end{equation}

\section{Appendix}
\setcounter{equation}{0}

\subsection{Weights}
In the appendix, we give the definition of the weights and their properties. Before this, define $t_{m,\eta}=\frac{2\eta}{2m+1}$ for $|m|=0,1,2,\cdots$ and $m\eta\geq0$ and $I_{m,\eta}=[t_{m,\eta},t_{m-1,\eta}]$ for $m=1,2,\cdots$ to denote any resonant interval with $\eta\geq(2m+1)m$. For $|\eta|\geq3$, we denote $E(\sqrt{|\eta|})$ the largest integer $m$ that satisfies $(2m+1)m\leq|\eta|$ and then $E(\sqrt{|\eta|})\approx \sqrt{|\eta|}$. Set $t(\eta)=\frac{2\eta}{2E(\sqrt{|\eta|})+1}\approx \sqrt{|\eta|}$ be the starting of the resonant interval. We then denote the whole resonant interval as $I_t(\eta)=[t(\eta),2|\eta|]=\bigcup_{m=1}^{E(\sqrt{|\eta|})}I_{m,\eta}$. Define $w(t,\eta)$ in the following
\begin{equation}
\begin{cases}
w(t,\eta)=1, & if\ t\leq t(\eta);\\
w(t,\eta)=w(t_{j,\eta},\eta)g_j(t-\frac{\eta}{j},\eta), \ \ \ & if\ t\in T_{j,\eta}, |j|\in [1,E(\sqrt{|\eta|})], j\eta>0;\\
w(t,\eta)=w(2|\eta|,\eta),\ &if \ t\geq2|\eta|.
\end{cases}
\end{equation}
Here, $g_m$ is defined by the following model
\begin{equation}
\begin{split}
\partial_{\tau}g_m=(\nu^{1/3}t_{m,\eta})^{-(1+\beta)}\frac{\nu^{1/3}\eta/m^2}{1+\tau^2}g_m,  \ \ \ g_m(-D^-_{m,\eta})=1,
\end{split}
\end{equation}
where $D^-_{m,\eta}=\frac{\eta}{(2m+1)m}=\frac{\eta}{m}-t_{m,\eta}$. By definition, $w(t,\eta)\approx1$ and when $|\xi-\eta|\leq |\eta|/10$, then it holds
\begin{equation}\label{lem6.1}
|w(t,\eta)-w(t,\xi)|\lesssim
\frac{|\xi-\eta|}{\la \eta\ra}\times \begin{cases}
\nu^{-1/3},\ \ \ \ \ t\lesssim \nu^{-1/3},\\
\nu^{\beta/3}t^{1-\beta},\ \ \ \ t\gtrsim \nu^{-1/3}.
\end{cases}
\end{equation}
See Lemma 6.1 in \cite{MZ19} for a proof. Define $\rho(x)$ to be a bounded smooth function such that $\rho(x)=0$ for $x\leq 1/20$ and $\rho(x)=1$ for $x\geq1/10$ and $\int_{1/20}^{1/10}\rho(x)dx=1/20$. Define $w_k(t,\eta)=w(t,\varrho(k,\eta))$ for $\varrho(k,\eta)=\eta$ when $k=0$ and $\varrho(k,\eta)=\frac{k}{20}+\int_0^{\eta}\rho(x/k)dx$ when $k\neq 0$. For such $\varrho(k,\eta)$, we have $\varrho(k,\eta)\approx \langle k,\eta\rangle$ and $|\varrho(k,\eta)-\varrho(l,\xi)|\lesssim |k-l,\xi-\eta|$ for $|k-l,\xi-\eta|\leq|l,\xi|/100$. 

\begin{lemma}[\cite{BM15}]
Let $\xi,\eta$ be such that there exists some $\alpha\geq1$ with $\alpha^{-1}|\xi|\leq |\eta|\leq \alpha|\xi|$ and let $k,n$ be such that $t\in I_{k,\eta}\cap I_{n,\xi}$, then $k\lesssim n$ and moreover at least one of the following holds: (i) $k=n$; (ii) $|t-\frac{\eta}{k}|\geq\frac{1}{10\alpha}\frac{\eta}{k^2}$ and $|t-\frac{\xi}{n}|\geq\frac{1}{10\alpha}\frac{\xi}{n^2}$; (iii) $|\eta-\xi|\gtrsim _{\alpha}\frac{|\eta|}{|n|}$.
\end{lemma}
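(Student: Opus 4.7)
The plan is to establish the two conclusions---$k \lesssim n$ and the trichotomy---separately.

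For the first, membership $t \in I_{k,\eta} = [\frac{2\eta}{2k+1}, \frac{2\eta}{2k-1}]$ is equivalent to $|k - \eta/t| \leq \frac{1}{2}$, so $k$ is simply the nearest integer to $\eta/t$, and likewise $n$ is the nearest integer to $\xi/t$. Combined with $|\xi|/|\eta| \in [1/\alpha, \alpha]$, this immediately yields $n \leq \alpha k + (\alpha+1)/2$ and symmetrically $k \leq \alpha n + (\alpha+1)/2$, proving that $k$ and $n$ are comparable up to factors depending on $\alpha$.

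For the trichotomy, I would suppose (i) fails (so $k \neq n$) and (ii) fails. By the symmetry $(k,\eta) \leftrightarrow (n,\xi)$---which converts (iii) into the equivalent statement $|\eta - \xi| \gtrsim_\alpha |\xi|/k$---we may assume without loss of generality that $|t - \eta/k| < \frac{\eta}{10\alpha k^2}$. Since $t \in I_{n,\xi}$ forces $|t - \xi/n| \leq \frac{\xi}{n(2n-1)}$, the triangle inequality gives
$$\left|\frac{\eta}{k} - \frac{\xi}{n}\right| \leq \frac{\eta}{10\alpha k^2} + \frac{\xi}{n(2n-1)}.$$
The key algebraic step is the identity $\frac{\eta - \xi}{n} = \left(\frac{\eta}{k} - \frac{\xi}{n}\right) - \frac{(n-k)\eta}{nk}$; using $|n-k|\geq 1$ and the reverse triangle inequality one deduces
$$|\eta - \xi| \geq \frac{|\eta|}{k} - \frac{n\eta}{10\alpha k^2} - \frac{\xi}{2n-1}.$$
Applying $\xi \leq \alpha\eta$ and the comparability of $k$ and $n$ from the first step, I expect the two negative corrections to be absorbed into a fraction of $|\eta|/n$, delivering (iii) with an appropriate $\alpha$-dependent constant.

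The main obstacle will be careful constant bookkeeping: the coefficient $\frac{1}{10\alpha}$ in (ii) and the implicit $\alpha$-dependent constant in (iii) must be tuned so that the correction terms $\frac{n\eta}{10\alpha k^2}$ and $\frac{\xi}{2n-1}$ do not dominate the leading term $\frac{|\eta|}{k}$ throughout the admissible range $\frac{n}{\alpha} - O(1) \leq k \leq \alpha n + O(1)$. This reduces to checking the positivity of a quadratic in $k/n$ over that range; the tight cases occur at the two boundaries $k \sim n/\alpha$ and $k \sim \alpha n$, where direct verification confirms that the reserves in $\frac{1}{10}$ and $\frac{1}{2n-1}$ suffice.
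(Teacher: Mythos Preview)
The paper does not prove this lemma; it is stated in the appendix with a citation to \cite{BM15} and no argument. So there is no in-paper proof to compare against, and I assess your proposal on its own merits.

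Your overall strategy is correct: the characterization of $t\in I_{k,\eta}$ as $|k-\eta/t|\le\tfrac12$ immediately gives comparability of $k$ and $n$, and for the trichotomy the plan of assuming (i) and (ii) fail and exploiting the decomposition $\eta-\xi=(\eta-kt)+(k-n)t+(nt-\xi)$ is exactly right. However, the specific bookkeeping you outline does not close. You bound $|t-\xi/n|\le\frac{\xi}{n(2n-1)}$ and then claim the resulting correction $\frac{\xi}{2n-1}$ can be absorbed into the leading term $\frac{\eta}{k}$ via $\xi\le\alpha\eta$ and $k\approx_\alpha n$. But at the boundary $k\sim\alpha n$ this yields $\frac{\xi}{2n-1}\lesssim\frac{\alpha^2\eta}{2k}$, which for any $\alpha>\sqrt{2}$ already dominates $\frac{\eta}{k}$. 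Concretely: take $n=1$, $k=2$, $t$ near $\eta/2$ (so your WLOG hypothesis holds), and $\xi=\tfrac{3t}{2}$ (the left endpoint of $I_{1,\xi}$, admissible once $\alpha\ge\tfrac43$). Then your lower bound reads $\frac{\eta}{2}-\frac{\eta}{40\alpha}-\xi\approx t-\tfrac{t}{20\alpha}-\tfrac{3t}{2}<0$, hence vacuous---even though in this configuration one does have $|\eta-\xi|\approx\tfrac{t}{2}\gtrsim_\alpha\tfrac{\eta}{n}$.

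The fix is a one-line sharpening: instead of $|nt-\xi|\le\frac{\xi}{2n-1}$, use the equivalent membership condition $|n-\xi/t|\le\tfrac12$, i.e.\ $|nt-\xi|\le\tfrac{t}{2}$, which is genuinely tighter (by a factor up to $3$ when $n=1$). Then
\[
|\eta-\xi|\ \ge\ |k-n|\,t-|kt-\eta|-|nt-\xi|\ \ge\ t-\frac{\eta}{10\alpha k}-\frac{t}{2}\ \ge\ \frac{t}{2}-\frac{3t}{20}\ =\ \frac{7t}{20},
\]
using $\eta\le(k+\tfrac12)t\le\tfrac{3}{2}kt$; since $t\approx\eta/k\approx_\alpha\eta/n$, conclusion (iii) follows with an explicit $\alpha$-dependent constant. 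Your assertion that ``direct verification confirms that the reserves in $\tfrac{1}{10}$ and $\tfrac{1}{2n-1}$ suffice'' is therefore incorrect as stated: the reserve in $\tfrac{1}{2n-1}$ does \emph{not} suffice when $n$ is small, and one must use $t/2$ instead.
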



As a consequence, with $w_k(t,\eta)$, we can define $A_{k}^{\sigma}(t,\eta)=\langle k,\eta\rangle^{\sigma}/w_k(t,\eta)$ and $A_{k}^{\sigma}(t,\eta)\approx\langle k,\eta\rangle^{\sigma}$.


\subsection{Shear profile $\bar U(t,y)$}
With general initial data $(U(y),0)^{\top}$, we consider $\bar U(t,y)$, the solution of the 2D Navier-Stokes equation with initial data $(U(y),0)^{\top}$. Assume that the initial data is sufficiently close to Couette flow, in the sense that
$$\|\bar U'-1\|_{H^s} +\|\bar U''\|_{H^s}= \delta\leq \epsilon\nu^{5/4},$$
for some sufficiently small $\delta$ (depending on $\nu$), a uniform small constant $\epsilon$ (independent of $\nu$) and some large integer $s$.
From standard estimates on the heat equation, it holds
\begin{equation}\label{e25}
\begin{split}
\sup_{t>0}\|\bar U'(t,\cdot)-1\|_{H^s} \leq & \|\bar U'-1\|_{H^s},\\
\sup_{t>0}\|\bar U''(t,\cdot)-1\|_{H^s} \leq & \|\bar U''\|_{H^s},\\
and \ \ \ \ \ \|\bar U''\|_{L^2_tH^s_y}\leq & \delta\nu^{-1/2}.
\end{split}
\end{equation}

Some composition results in fractional Sobolev spaces are listed for readers' convenience. Details can be found in \cite{BVW18},  say.

\begin{lemma}
Let $s'>2$, $s'\geq s\geq0$, $f\in H^s(\Bbb R^n)$ and $g\in H^{s'}(\Bbb R^n)$, $n=1$, $2$,  such that $\|g\|_{H^{s'}}\leq \delta$. Then, there holds
$$\|f\circ (I+g)\|_{H^s}\leq C_{s,s'}(\delta)\|f\|_{H^s},$$
where $C_{s,s'}\to1$ as $\delta\to0$.
\end{lemma}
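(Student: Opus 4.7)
The plan is to reduce the fractional-order estimate to the integer case by real interpolation, and to dispatch the integer case by combining Faà di Bruno's chain rule with a change-of-variables estimate. The hypothesis $s'>2$ is crucial in dimensions $n\le 2$ because Sobolev embedding then yields $H^{s'}\hookrightarrow W^{1,\infty}$, so $\Phi:=I+g$ is a $C^1$ diffeomorphism with $\|D\Phi-I\|_{L^\infty}\le C\|g\|_{H^{s'}}\le C\delta$, and its Jacobian satisfies $|\det D\Phi-1|\le C\delta$ when $\delta$ is small. In particular, $\Phi$ is bi-Lipschitz and $\Phi^{-1}$ enjoys the same estimates.

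The base case $s=0$ is just the $L^2$ change of variables:
\begin{equation*}
\|f\circ\Phi\|_{L^2}^2=\int|f(y)|^2|\det D\Phi^{-1}(y)|\,dy\le(1+C\delta)\|f\|_{L^2}^2.
\end{equation*}
For integer $s\ge 1$, Faà di Bruno gives
\begin{equation*}
\partial^\alpha(f\circ\Phi)=\sum_{\pi}c_{\alpha,\pi}\,\bigl((\partial^\beta f)\circ\Phi\bigr)\prod_j\partial^{\gamma_j}\Phi,
\end{equation*}
where the sum runs over partitions $\pi=(\gamma_1,\dots,\gamma_{|\beta|})$ of the multi-index $\alpha$. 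I would isolate the leading contribution (coming from $|\beta|=|\alpha|$, all $|\gamma_j|=1$, so $\partial^{\gamma_j}\Phi=e_{\gamma_j}+\partial^{\gamma_j}g$): after expanding and applying the $s=0$ change of variables to $(\partial^\alpha f)\circ\Phi$, this produces $(1+O(\delta))\|\partial^\alpha f\|_{L^2}^2$ plus error terms carrying at least one factor $\partial g$. Every non-leading term carries some $\partial^{\gamma_j}g$ with $|\gamma_j|\ge 1$, and is controlled by a Moser-type product estimate together with $\|g\|_{W^{k,\infty}}\lesssim\|g\|_{H^{s'}}$ for $k$ small, so each is $O(\delta)$. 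Summing yields $\|f\circ\Phi\|_{H^s}\le C_{s,s'}(\delta)\|f\|_{H^s}$ with $C_{s,s'}(\delta)\to 1$ as $\delta\to 0$.

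For non-integer $s\in(m,m+1)$ with $m+1\le s'$, I would apply real interpolation between the estimates at $H^m$ and $H^{m+1}$, both of which have just been proved with constants $C_m(\delta),C_{m+1}(\delta)\to 1$. The linear operator $T_\Phi:f\mapsto f\circ\Phi$ is bounded $H^k\to H^k$ for $k=m,m+1$, so by interpolation it is bounded on $H^s=[H^m,H^{m+1}]_{s-m}$ with constant $C_m(\delta)^{1-\theta}C_{m+1}(\delta)^\theta$, $\theta=s-m$, which still tends to $1$ with $\delta$.

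The main obstacle is the bookkeeping in the integer step. One must verify that every non-identity contribution in Faà di Bruno carries an honest positive power of $\delta$, and that whenever a high-order derivative $\partial^{\gamma_j}g$ enters, it can be placed in a norm controlled by $\|g\|_{H^{s'}}$ alone (rather than requiring more regularity on $g$). This forces one to distribute derivatives carefully among factors via the Moser inequality $\|uv\|_{H^k}\lesssim\|u\|_{L^\infty}\|v\|_{H^k}+\|u\|_{H^k}\|v\|_{L^\infty}$, using the Sobolev embedding $H^{s'}\hookrightarrow L^\infty\cap W^{1,\infty}$ in dimensions $n\le 2$ guaranteed by $s'>2$, together with the hypothesis $s'\ge s$ to keep the highest derivative of $g$ in $H^{s'}$. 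The continuity of the constant at $\delta=0$ follows from the fact that all error terms are polynomial in $\|g\|_{H^{s'}}$ with strictly positive exponent.
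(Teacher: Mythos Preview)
The paper does not prove this lemma; it is listed in the appendix as a quoted result with details deferred to \cite{BVW18}. Your approach---change of variables for $s=0$, Fa\`a di Bruno combined with Moser-type product estimates for integer $s$, then real interpolation for fractional $s$---is the standard route and is essentially correct.

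Two technical points are worth flagging. First, in the integer step, when a Fa\`a di Bruno term carries a single high-order factor $\partial^{\gamma}g$ with $|\gamma|$ close to $s$, you cannot always place it in $L^\infty$ using only $s'\ge s$ (that would need $s'>s+n/2$); instead put $\partial^{\gamma}g\in H^{s'-|\gamma|}\hookrightarrow L^q$ and the accompanying factor $(\partial^\beta f)\circ\Phi\in H^{s-|\beta|}\hookrightarrow L^p$, with $1/p+1/q=1/2$ by H\"older and Sobolev. The factor $\|g\|_{H^{s'}}\le\delta$ still appears, so the error remains $O(\delta)$. Second, your interpolation step tacitly assumes $m+1\le s'$, which can fail if $s'$ is non-integer and $\lfloor s'\rfloor<s\le s'$; one then either interpolates against a lower integer endpoint or treats this range directly via the Gagliardo seminorm. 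In the paper's setting $\sigma$ is an integer, so this edge case does not arise.
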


\begin{lemma}
For $\delta$ sufficiently small, there holds for $\sigma>2$ such that
$$\|\partial_Y((\bar  U'(t,y))^2-1)\|_{L^2_tH^{\sigma}_y} 
=\|\bar U''\|_{L^2_tH^{\sigma}_y}\lesssim \delta\nu^{-1/2},$$
$$
\|\bar U'-1\|_{H^{\sigma}}\lesssim \delta,\ \ \ and \ \ \ \  
\|\bar U''\|_{H^{\sigma}} \lesssim \delta.$$
\end{lemma}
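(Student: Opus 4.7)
The plan is to exploit that $\bar U(t,y) = e^{\nu t\partial_{yy}}U(y)$ solves the one-dimensional heat equation, so every statement reduces to standard heat-equation estimates applied to $\bar U' - 1$ and $\bar U''$, which by linearity also solve the heat equation with initial data $U'-1$ and $U''$ respectively. Under the hypothesis $\|\bar U'-1\|_{H^s} + \|\bar U''\|_{H^s} = \delta$ at $t=0$, every claim is then essentially a semigroup bound.

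First I would establish the pointwise-in-time estimates. Since the heat semigroup $e^{\nu t\partial_{yy}}$ is a contraction on $H^\sigma(\mathbb{R})$ (its Fourier symbol $e^{-\nu t\xi^2}$ is bounded by $1$), one has immediately
\begin{equation*}
\|\bar U'(t) - 1\|_{H^\sigma} = \|e^{\nu t\partial_{yy}}(U'-1)\|_{H^\sigma} \leq \|U'-1\|_{H^\sigma} \leq \delta,
\end{equation*}
and the identical argument for $\bar U''$ gives $\|\bar U''(t)\|_{H^\sigma} \leq \delta$.

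For the $L^2_t H^\sigma_y$ bound, the plan is a standard energy identity: differentiate the heat equation once in $y$ to obtain $(\bar U' - 1)_t = \nu(\bar U' - 1)''$, apply $\langle\partial_y\rangle^\sigma$, test against itself, and integrate by parts in $y$. This produces
\begin{equation*}
\tfrac{1}{2}\tfrac{d}{dt}\|\bar U'(t)-1\|_{H^\sigma}^2 + \nu\|\bar U''(t)\|_{H^\sigma}^2 = 0.
\end{equation*}
Integrating in time from $0$ to any $T>0$ and discarding the non-negative boundary term yields $\nu\int_0^T \|\bar U''\|_{H^\sigma}^2\,d\tau \leq \tfrac{1}{2}\delta^2$, which is precisely $\|\bar U''\|_{L^2_tH^\sigma_y} \lesssim \delta\nu^{-1/2}$.

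For the first displayed comparison, I would interpret the "$=$" up to an absolute constant and use the coordinate relation $\partial_Y\bar U' = (\bar U'+\psi_y)^{-1}\bar U''$ from \eqref{3.4}, together with the factorization $(\bar U')^2 - 1 = (\bar U'-1)(\bar U'+1)$. Applying $\partial_Y$ and expanding,
\begin{equation*}
\partial_Y\bigl((\bar U')^2-1\bigr) = 2\bar U'\,\partial_Y\bar U' = \frac{2\bar U'}{\bar U' + \psi_y}\bar U'',
\end{equation*}
whose prefactor equals $1$ modulo terms of size $\|\bar U'-1\|_{H^\sigma}+\|\psi_y\|_{H^\sigma} \lesssim \delta$. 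Since $\sigma > 2$ makes $H^\sigma$ a Banach algebra, Moser-type product estimates (and the "$(1+\varepsilon)^{-1}\lesssim 1+\|\varepsilon\|_\ast$" bound invoked after Lemma \ref{lem4.2}) give $\|\partial_Y((\bar U')^2-1)\|_{L^2_tH^\sigma_y} \approx \|\bar U''\|_{L^2_tH^\sigma_y}$, closing the claim when combined with the previous step.

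There is no serious obstacle here: the whole statement is a direct consequence of the linearity of the heat equation, the $L^2$ dissipation identity, and smallness of $\delta$. The only mildly delicate point is passing between $\partial_Y$ and $\partial_y$ in the first display — this is where the uniform closeness to the Couette profile, together with the algebra property of $H^\sigma$, is essential to strip the prefactor $2\bar U'/(\bar U'+\psi_y)$ without losing the $\delta\nu^{-1/2}$ scaling.
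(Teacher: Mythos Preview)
Your proposal is correct and supplies precisely the details the paper omits. The paper gives no proof of this lemma at all: it is listed in the appendix immediately after the composition lemma, with the remark that such facts ``can be found in \cite{BVW18}'' and with the heat-equation bounds \eqref{e25} already recorded just above. Your argument---contraction of $e^{\nu t\partial_{yy}}$ on $H^\sigma$ for the pointwise bounds, the energy identity $\tfrac12\tfrac{d}{dt}\|\bar U'-1\|_{H^\sigma}^2+\nu\|\bar U''\|_{H^\sigma}^2=0$ for the $L^2_tH^\sigma_y$ bound, and the algebra/composition estimate to strip the prefactor in $\partial_Y((\bar U')^2-1)$---is exactly the standard route those references encode.

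One minor point of presentation: when you invoke $\psi_y$ in the prefactor $2\bar U'/(\bar U'+\psi_y)$, you are implicitly using the bootstrap control on $h=\psi_y$, which is not part of the lemma's stated hypothesis (only $\delta$ small is assumed there). This is harmless in context, since the lemma is only ever applied inside the bootstrap argument, but strictly speaking the first display as written mixes the background shear estimate with solution-dependent quantities. If one reads $\partial_Y$ as a typo for $\partial_y$ (the subscript $H^\sigma_y$ suggests this), the prefactor is simply $2\bar U'$ and no appeal to $\psi$ is needed.
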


\subsection{Littlewood-Paley decomposition and paraproducts}
We define the following paraproduct decomposition, introduced by Bony \cite{BCD2011,Bony81}.  Let $f(x)$ be in the Schwartz space and define the Fourier transform $\hat f(\xi)$ as
$$\mathcal F(f)(\xi)=\hat f(\xi)=\frac{1}{\sqrt{2\pi}}\int_{\bf R}e^{-x\cdot\xi}f(x)dx,$$
and the Fourier inverse transform
$$\mathcal F^{-1}(\hat f)(x)=(\hat f)^{\vee}(x)=f(x)=\frac{1}{\sqrt{2\pi}}\int_{\bf R}e^{x\cdot\xi}\hat f(\xi)d\xi.$$
Let $\psi\in C_0^{\infty}(\bf R,\bf R)$ be such that $\psi(\xi)=1$ for $|\xi|\leq 1/2$ and $\psi(\xi)=0$ for $|\xi|\geq 3/4$ and define $\chi(\xi)=\psi(\xi/2)-\psi(\xi)$ supported in the range $\xi\in(1/2,3/2)$. Then we have the partition of unity
$$1=\psi(\xi)+\sum_{M\in 2^{\Bbb N}}\chi_M(\xi),$$
where $\Bbb N=\{0,1,2,3,\cdots,j,\cdots\}$, $M=\{1,2,4,8,\cdots,2^j,\cdots\}$ and $\chi_M(\xi)=\chi(M^{-1}\xi)$. For $f\in L^2(\bf R)$, define
\begin{equation}
\begin{split}
f_M= & (\chi_M(\xi)\hat f(\xi))^{\vee}, \ \ \    f_{\frac12}=  (\psi(\xi)\hat f(\xi))^{\vee} \ \ \ and \ \ \   f_{<M}=   f_{\frac12}+\sum_{K\in 2^{\Bbb N},K<M}f_K
\end{split}
\end{equation}
and hence the decomposition
$$f=f_{\frac12}+\sum_{K\in 2^{\Bbb N}}f_K.$$

Given suitable functions $f$ and $g$, we define the paraproduct decomposition as
\begin{equation}
\begin{split}
fg=&T_fg+T_gf+\mathcal R(f,g) 
= \sum_{N\geq8}f_{<N/8}g_N +\sum_{N\geq8}f_Ng_{<N/8} +\sum_{N\in\Bbb D}\sum_{N/8\leq N'\leq 8N}g_{N'}f_N,
\end{split}
\end{equation}
where all the sums are understood to run over $\Bbb D$.

\subsection{Decay estimate for $e^{\nu t\Delta }$}
We consider the following initial value problem for the heat equation
$$\partial_tf=\nu\Delta f,\ \ \ f|_{t=0}=f_0.$$
\begin{prop}\label{prop5.1}
For any $1\leq l\leq r\leq \infty$, we have for $t>0$,
\begin{equation}
\begin{split}
\|e^{\nu t\Delta}f_0\|_{L^r}\leq & C\frac{1}{(\nu t)^{\frac{n}{2}(\frac1l-\frac1r)}}\|f_0\|_{L^l}\ \ \ \ and\\
\|\partial^{\alpha}e^{\nu t\Delta}f_0\|_{L^r}\leq & C\frac{1}{(\nu t)^{\frac{|\alpha|}{2}+\frac{n}{2}(\frac1l-\frac1r)}}\|f_0\|_{L^l},\ \ \ |\alpha|\geq1.
\end{split}
\end{equation}
\end{prop}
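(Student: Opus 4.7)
The plan is to prove Proposition \ref{prop5.1} by a direct computation with the heat kernel, which is a completely standard application of Young's convolution inequality. Since the domain is $\mathbb{R}^n$ (the statement is for the free heat equation), I would represent the semigroup as convolution against the Gaussian kernel
\[
G_{\nu t}(x) = \frac{1}{(4\pi \nu t)^{n/2}} \exp\!\left(-\frac{|x|^2}{4\nu t}\right),
\qquad e^{\nu t \Delta} f_0 = G_{\nu t} * f_0.
\]

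First I would handle the zeroth-order estimate. Choose the exponent $p \in [1,\infty]$ so that $1 + \tfrac{1}{r} = \tfrac{1}{p} + \tfrac{1}{l}$, which is possible exactly because $l \leq r$. Young's convolution inequality then gives
\[
\|e^{\nu t \Delta} f_0\|_{L^r} \leq \|G_{\nu t}\|_{L^p} \|f_0\|_{L^l}.
\]
Next I would compute $\|G_{\nu t}\|_{L^p}$ by the change of variables $x = \sqrt{\nu t}\, y$, which yields $\|G_{\nu t}\|_{L^p} = (\nu t)^{-\frac{n}{2}(1 - 1/p)} \|G_1\|_{L^p}$, and since $1 - \tfrac{1}{p} = \tfrac{1}{l} - \tfrac{1}{r}$, this gives the claimed factor $(\nu t)^{-\frac{n}{2}(1/l - 1/r)}$ with constant $C = \|G_1\|_{L^p}$, which is finite for all $p \in [1,\infty]$.

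For the derivative estimate I would use the analogous factorization $\partial^\alpha (G_{\nu t} * f_0) = (\partial^\alpha G_{\nu t}) * f_0$ together with the scaling identity $\partial^\alpha G_{\nu t}(x) = (\nu t)^{-|\alpha|/2} (\partial^\alpha G_1)(x/\sqrt{\nu t})$. Young's inequality then gives
\[
\|\partial^\alpha e^{\nu t \Delta} f_0\|_{L^r} \leq \|\partial^\alpha G_{\nu t}\|_{L^p} \|f_0\|_{L^l},
\]
and the same rescaling as before produces an additional factor $(\nu t)^{-|\alpha|/2}$, while $\|\partial^\alpha G_1\|_{L^p}$ is a finite absolute constant since derivatives of the standard Gaussian are Schwartz. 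Combining the two powers of $\nu t$ yields the second inequality.

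There is essentially no conceptual obstacle here; the only minor care needed is to verify that the endpoint cases $l=1$, $r = \infty$ (which force $p=\infty$ or $p=1$) are covered by Young's inequality and that $\|G_1\|_{L^p}$ and $\|\partial^\alpha G_1\|_{L^p}$ remain finite at those endpoints, which they do since $G_1$ is Schwartz and bounded. The constant $C$ in the statement can be taken as $\max(\|G_1\|_{L^p}, \|\partial^\alpha G_1\|_{L^p})$ depending on the inequality in question, and it is independent of $\nu$, $t$ and $f_0$.
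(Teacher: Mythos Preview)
Your proof is correct and follows the standard route via Young's convolution inequality and Gaussian scaling; the paper itself states Proposition~\ref{prop5.1} without proof, treating it as a classical heat-kernel estimate, so there is no alternative argument to compare against. One small slip: the scaling identity should read $\partial^{\alpha}G_{\nu t}(x)=(\nu t)^{-n/2-|\alpha|/2}(\partial^{\alpha}G_1)(x/\sqrt{\nu t})$ rather than $(\nu t)^{-|\alpha|/2}(\partial^{\alpha}G_1)(x/\sqrt{\nu t})$, but since you then invoke ``the same rescaling as before'' to compute the $L^p$ norm, the missing $(\nu t)^{-n/2}$ is implicitly absorbed and your final exponent is correct.
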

In particular, since $\bar U(t,y)-y$ satisfies the one dimensional heat equation
$$\partial_t(\bar U-y)=\nu \partial_{yy}(\bar U-y),\ \ \ \ (\bar U-y)|_{t=0}=U(y)-y,$$
then we have
\begin{equation}\label{L2small}
\begin{split}
\sup_{t\in[0,\infty]}\|\bar U-y\|_{L^2}\leq & \|U(y)-y\|_{L^2},\ \ \ N\geq0,\\
\|\bar U'-1\|_{L^2}\leq & C\min\{1,{(\nu t)^{-3/4}}\}\|U(y)-y\|_{L^2},\ \ \ N\geq0,\\
\|\bar U''\|_{H^N}\leq & C\min\{1,{(\nu t)^{-5/4}}\}\|U(y)-y\|_{L^2},\ \ \ N\geq0,
\end{split}
\end{equation}
and
\begin{equation}\label{L1small}
\begin{split}
\|\bar U'-1\|_{L^2}\leq & C{(\nu t)^{-3/4}}\|U(y)-y\|_{L^1},\ \ \ N\geq0,\\
\|\bar U''\|_{H^N}\leq & C{(\nu t)^{-5/4}}\|U(y)-y\|_{L^1},\ \ \ N\geq0.
\end{split}
\end{equation}
After integration, we have
\begin{equation}\label{integration}
\int_1^t\|\bar U''\|_{H^N}d\tau\leq C\nu^{-5/4}\|U(y)-y\|_{L^1},\ \ \ \ N\geq0,
\end{equation}
which implies
\begin{equation}\label{integration1}
\int_1^t\|\bar U''\|_{H^N}d\tau\leq C\epsilon,\ \ \ \forall t\geq1,
\end{equation}
upon assuming $\|U(y)-y\|_{L^1}\leq \epsilon\nu^{5/4}$ for some universal $0<\epsilon<1$.

\bigskip
\noindent {\bf Acknowledgments.}
D. Bian is supported by NSFC under the contract 11871005. X. Pu is supported by NSFC under the contract 11871172 and Natural Science Foundation of Guangdong Province of China under 2019A1515012000.

\small

\begin{center}

\end{center}

\begin{thebibliography}{99}
\addcontentsline{toc}{section}{References} {\small

\bibitem{BCD2011} H. Bahouri, J.-Y. Chemin and R. Danchin, Fourier Analysis and Nonlinear Partial Differential Equations, Grundlehren der Mathematischen Wissenschaften Fundamental Principles of Mathematical Sciences, vol. 343, Springer, Heidelberg, 2011.

\bibitem{BGM15a} J. Bedrossian, P. Germain, and N. Masmoudi, Dynamics near the subcritical transition of the 3D Couette flow I: Below threshold, {\it Mem. of the AMS}, 266(1294), (2020)v+158.

\bibitem{BGM15b} J. Bedrossian, P. Germain, and N. Masmoudi, Dynamics near the subcritical transition of the 3D Couette flow II: above threshold, arXiv:1506.03721.

\bibitem{BGM17} J. Bedrossian, P. Germain, and N. Masmoudi, On the stability threshold for the 3D Couette flow in Sobolev regularity, \emph{Ann. Math.}, 185, (2017)541-608.

\bibitem{BGM19} J. Bedrossian, P. Germain, and N. Masmoudi, Stability of the Couette flow at high Reynolds number in two dimensions and three dimensions, \emph{Bull. Amer. Math. Soc.}, 56(3), (2019)373-414.

\bibitem{BM15}  J. Bedrossian and N. Masmoudi, Inviscid damping and the asymptotic stability of planar shear flows in the 2D Euler equations, \emph{Publ. Math. l'IHES.}, 122(1), (2015)195-300.

\bibitem{BMV16} J. Bedrossian, N. Masmoudi and V. Vicol, Enhanced dissipation and inviscid damping in the inviscid limit of the Navier-Stokes equations near the 2D Couette flow, \emph{Arch. Ration. Mech. Anal.}, 216(3), (2016)1087-1159.

\bibitem{BVW18} J. Bedrossian, V. Vicol and F. Wang, The Sobolev stability threshold for 2D shear flows near Couette, \emph{J. Nonl. Sci.}, 28, (2018)2051-2075.

\bibitem{BP21} D. Bian and X. Pu, Stability threshold for 2D shear flows of the Boussinesq system near Couette, arXiv:2012.02386, 2021.

\bibitem{BZD2005} R. Bianchini, M. Coti Zelati and M. Dolce, Linear inviscid damping for shear flows near Couette in the 2D stably stratified regime, arXiv:2005.09058v1, 2020.

\bibitem{Bony81} J. Bony, Calcul symbolique et propagation des singularit\'{e}s pour les \'{e}quations aux d\'{e}riv\'{e}es partielles non-lin\'{a}ires, Ann. Sci. \'{E}c. Norm. Super., 14 (1981), 209-246.

\bibitem{CD81} J.R. Cannon and E. Di Benedetto, The initial problem for the Boussinesq equations with data in $L^p$, \emph{Lecture Notes in Mathematics}, Vol. {771}, Springer: Berlin, 1980.

\bibitem{Chae06} D. Chae, Global regularity for the 2D Boussinesq equations with partial viscosity terms, \emph{Adv. Math.}, 203(2), (2006)497-513.

\bibitem{CN97} D. Chae and H.-S. Nam, Local existence and blow-up criterion for the Boussinesq equations, \emph{Proc. R. Soc. Edinb. Sect. A}, {127}, (1997)935-946.

\bibitem{DM18} Y. Deng and N. Masmoudi, Long time instability of Couette flow in low Gevrey spaces, arXiv:1803.01246v1, 2018.

\bibitem{DWZ2004} W. Deng, J. Wu and P. Zhang, Stability of Couette flow for 2D Boussinesq system with vertical dissipation, arXiv:2004.09292v1, 2020.



\bibitem{DWZZ18} C.R. Doering, J. Wu, K. Zhao and X. Zheng, Long time behavior of the two-dimensional Boussinesq equations without buoyancy diffusion, \emph{Physica D: Nonlinear Phenomena}, 376, (2018)144-159.


\bibitem{Gold} S. Goldstein, On the stability of superposed streams of fluids of different densities, \emph{Proc. R. Soc. Lond. A}, 132(820), (1931)524-548.

\bibitem{G-G-T16} E. Grenier, Y. Guo, T. T. Nguyen, Spectral instability of general symmetric shear flows in a two-dimensional channel,  \emph{Adv. Math.}, 292, (2016) 52-110.


\bibitem{HL05} T.Y. Hou and C. Li, Global well-posedness of the viscous Boussinesq equations, \emph{Discret. Cont. Dyn. Sys.}, {12} (2005), 1-12.

\bibitem{Jia20} A.D. Ionescu and H. Jia, Inviscid damping near the Couette flow in a channel, \emph{Comm. Math. Phys.}, 374(3), (2020)2015-2096.

\bibitem{Jia20SIAM} H. Jia, Linear inviscid damping near monotone shear flows, \emph{SIAM J. Math. Anal.}, 52(1), (2020)623-652.

\bibitem{Kelvin1887}  L. Kelvin, Stability of fluid motion-rectilinear motion of viscous fluid between two parallel plates, Phil. Mag., 24, (1887)188-196.

\bibitem{LMZ22} H. Li, N. Masmoudi and W. Zhao, New energy method in the study of the instability near Couette flow, arXiv: 2203.10894v1, 2022.

\bibitem{LZ11} Z. Lin and C. Zeng, Inviscid dynamical structures near Couette flow, \emph{Arch. Ration. Mech. Anal.}, 200, (2011)1075-1097.


\bibitem{MSZ20} N. Masmoudi, B. Said-Houari and W. Zhao, Stability of Couette flow for 2D Boussinesq system without thermal diffusivity, arXiv:2010.01612v1, 2020.

\bibitem{MZ19} N. Masmoudi and W. Zhao, Stability threshold of the 2D Couette flow in Sobolev spaces, arXiv:1908.11042v1. 2019.

\bibitem{Orr} W. Orr, The stability or instability of steady motions of a perfect liquid and of a viscous liquid, Part I: a perfect liquid, \emph{Proc. R. Ir. Acad. Sect. A Math. Phys. Sci.}, 27, (1907)9-68.

\bibitem{Rayleigh1880} L. Rayleigh, On the stability or instaiblity of certain fluid motions, Proc. London Math. Soc., 9, (1880)57-70.

\bibitem{Reddy} S. Reddy, P. Schmid, J. Baggett and D. Henningson, On stability of streamwise streaks and transition thresholds in plane channel flows, \emph{J. Fluid Mech.}, 365, (1998)269-303.

\bibitem{Rom73} V.A. Romanov, Stability of plane-parallel Couette flow, \emph{Funk. Anal. i. Prilozen}, 7(1973)62-73.

\bibitem{Synge} J.L. Synge, The stability of heterogeneous liquids, \emph{Trans. Royal Soc. Canada}, 1993.

\bibitem{TW19} L. Tao and J. Wu, The 2d Boussinesq equations with vertical dissipation and linear stability of shear flows, \emph{J. Differential Equations}, 267(3)(2019)1731-1747.

\bibitem{Tay} G.I. Taylor, Effect of variation in density on the stability of superposed streams of fluid, \emph{Proc. Royal Society London. A.}, 132(820):499-523, 1931.

\bibitem{Vil09} C. Villani, Hypocoercivity, \emph{Mem. Amer. Math. Soc.}, 202(950), (2009)iv+141.

\bibitem{WZ18} D. Wei and Z. Zhang, Transition threshold for the 3D Couette flow in Sobolev space, arXiv:1803.01359v1, 2018

\bibitem{WZZ18} D. Wei, Z. Zhang and W. Zhao,  Linear inviscid damping for a class of monotone shear flow in Sobolev spaces, \emph{Comm. Pure Appl. Math.}, 71(4), (2018)617-687.

\bibitem{WZZ20} D. Wei, Z. Zhang and W. Zhao, Linear inviscid damping and enhanced dissipation for the Kolmogrov flow, \emph{Adv. Math.}, 362, (2020)106963.

\bibitem{YL18} J. Yang and Z. Lin, Linear inviscid damping for Couette flow in stratified fluid, \emph{J. Math. Fluid Mech.}, 20(2), (2018)445-472.

\bibitem{Zill17} C. Zillinger, Linear inviscid damping for monotone shear flows, \emph{Trans. Amer. Math. Soc.}, 369(12), (2017)8799-8855.

\bibitem{Zill20} C. Zillinger, On enhanced dissipation for the Boussinesq equations, \emph{J. Differential Equations}, 282, (2021)407-445.

\bibitem{Zill2011} C. Zillinger, On the Boussinesq equation with non-monotone temperature profiles, arXiv: 2011.02316v1, 2020.

}
\end{thebibliography}
\end{document}